\newtheorem{thm}{Theorem}
\newtheorem{lemma}[thm]{Lemma}
\newtheorem{prop}[thm]{Proposition}
\def\argmin{\text{arg}\,\text{min}}
\def\vspan{\text{span}\,}
\title{A Non-Iterative Reconstruction Algorithm for the Acoustic Inverse Boundary Value Problem}
\author{Tianyu Yang and Yang Yang}
\date{}
\begin{document}

\maketitle

\abstract{We present a non-iterative algorithm to reconstruct the isotropic acoustic wave speed from the measurement of the Neumann-to-Dirichlet map. The algorithm is designed based on the boundary control method and involves only computations that are stable. We prove the convergence of the algorithm and present its numerical implementation. The effectiveness of the algorithm is validated on both constant speed and variable speed, 
with full and partial boundary measurement as well as different levels of noise.}

\section{Introduction}

This paper concerns numerical reconstruction of an isotropic wave speed in the inverse boundary value problem (IBVP) for the acoustic wave equation. Specifically, let $T>0$ be a constant and $\Omega\subset\mathbb{R}^n$ be a bounded domain with smooth boundary $\partial\Omega$. Consider the initial-boundary value problem for the acoustic wave equation:
\begin{equation} \label{eq:ibvp}
\left\{
\begin{array}{rcl}
\partial^2_t u(t,x) - c^2(x) \Delta u(t,x) & = & 0, \quad\quad\quad \text{ in } (0,2T) \times \Omega \\
 \partial_\nu u  & = & f, \quad\quad\quad \text{ on } (0,2T) \times \partial\Omega \\ 
 u(0,x) = \partial_t u(0,x) & = & 0 \quad\quad\quad\quad x \in \Omega.
\end{array}
\right.
\end{equation}
Here $c(x) \in C^\infty(\Omega)$ is a smooth wave speed bounded away from $0$ and $\infty$.
Denote the solution by $u(t,x)=u^f(t,x)$.

Given $f\in C^\infty_c((0,2T)\times\partial\Omega)$, the well-posedness of this problem is ensured by the standard theory for second order hyperbolic partial differential equations. Define the Neumann-to-Dirichlet(ND) map:
\begin{equation} \label{eq:NDmap}
\Lambda_c f := u^f|_{(0,2T)\times \partial\Omega}.
\end{equation}
The IBVP for the acoustic wave equation aims to recover the wave speed $c(x)$ from the knowledge of the ND map $\Lambda_c$.

This inverse problem lies at the core of many imaging technologies. An important example is the \textit{Ultra-Sound Computed Tomography} (USCT). In USCT, a point-like ultrasound source emits an acoustic pulse from a known location outside the tissue. The acoustic wave travels through the tissue and the resulting wave field is recorded by a collection of surrounding ultrasonic transducers. This process is repeated many times for plenty of emitter locations, see Figure~\ref{fig:USCT} for an illustration with $M$ transducers, The goal of USCT is to reconstruct the acoustic wave speed everywhere inside the tissue. Similar data acquisition scheme occurs in seismic tomography, where one attempts to recover the
underground wave speed to locate oil reservoirs. 
In the continuous formulation of USCT and seismic tomography, the measurement is the boundary values of the Green's function. However, it is well known~\cite{nachman1988reconstructions} that such data is equivalent to knowledge of the ND map $\Lambda_c$ under mild assumptions.
\begin{figure}[!htb] 
\begin{center} 
\includegraphics[width=0.5\textwidth]{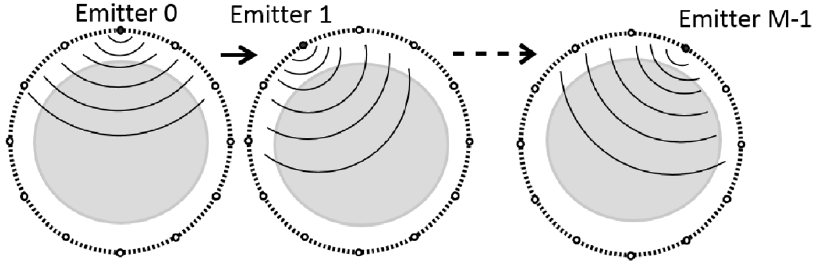}
\caption{Data acquisition scheme in USCT~\cite{matthews2017joint}} \label{fig:USCT}
\end{center}
\end{figure}

The IBVP for the acoustic wave equation has been extensively studied in the literature.
Among them, Belishev~\cite{belishev1988approach} proved that $c$ is uniquely determined using the boundary control (BC) method combined with Tataru's unique continuation result~\cite{Tataru95unique}. The result has since been greatly extended to wave equations with lower order terms on Riemannian manifolds with boundary~\cite{belishev1992reconstruction, eskin2006new, eskin2007inverse, eskin2008inverse, eskin2017inverse, feizmohammadi2019recovery, hu2017determination, isakov1992stability, kian2016recovery, kian2017recovery, kurylev2000hyperbolic, kurylev2018inverse, lassas2014inverse, ramm1991property, salazar2013determination, stefanov1989uniqueness}. Stability estimates have been obtained in~\cite{aicha2015stability, bao2014sensitivity, bellassoued2017stable, bellassoued2010stability, bosi2017reconstruction, liu2016lipschitz, montalto2014stable, stefanov1998stability, stefanov2005stable, stefanov2018lorentzian}.

The BC method has been numerically implemented to reconstruct the wave speed~\cite{belishev1999dynamical, belishev2016numerical, pestov2010numerical}. The implementation typically involves solving a control problem. This is achieved in~\cite{belishev1999dynamical, belishev2016numerical} using the so-called wave bases, and in~\cite{bingham2008iterative, de2018recovery} using the regularized optimization.
In the $1+1$ dimension, a discrete regularization strategy is developed in~\cite{korpela2018discrete} to recover $c$ from a single pulse-like source.
A variant of the BC method has also been applied to detect blockage in networks~\cite{blaasten2019blockage}.

In this paper, we develop a BC-based algorithm to reconstruct the wave speed. 
The derivation is inspired by the theoretical proofs in~\cite{liu2016lipschitz}, see also~\cite{pestov1999reconstruction}.
The algorithm has several favorable features from the computational viewpoint:
(1) The algorithm is direct.
Conventional computational approaches to recover $c$ relies on minimization of a data misfit functional through iterations. 
These approaches suffer from local minima, where gradient-descent-based iterations are trapped thus fail to give the true solution to the imaging problem.
An example is the cycle-skipping effect in the full waveform inversion. In contrast, a BC-based method solves directly for the solution and involves no iteration. 
(2) The algorithm converges globally to the true speed. This is again in contrast to iterative algorithms, which converge to the global minimum only when the initial guess is sufficiently accurate. A resulting prospect is that our algorithm could be used to provide a reliable initial guess for iterative methods.
(3) The algorithm involves only computations that are stable. Following the idea in~\cite{liu2016lipschitz}, one can show that the algorithm is locally Lipschitz stable for
a low frequency component of $c^{-2}$. This is a distinction from the previous BC method in~\cite{de2018recovery}.
(4) The algorithm is robust to random noise. 
The derivation reveals that the ND map is naturally followed by a low-pass filter in the assembly of the connecting operator (see~\eqref{eq:K}). This filter helps remove high-frequency content in the ND map, leading to robust reconstruction with respect to random noises.



The paper is organized as follows. In Section~\ref{sec:derivation}, we derive the reconstruction algorithm and the convergence result using the boundary control theory. In Section~\ref{sec:imple}, we elucidate our implementation of the algorithm using the finite difference scheme. Section~\ref{sec:examples} is devoted to numerical experiments, where the algorithm is evaluated on both constant speed and variable speed, with full and partial boundary measurement as well as different levels of noise.

\section{Derivation and Convergence} \label{sec:derivation}

We derive the reconstruction algorithm and show its convergence in this section. Given a function $u(t,x)$, we write $u(t)=u(t,\cdot)$ for the spatial part as a function of $x$.
Introduce the time reversal operator $R: L^2([0,T]\times\partial\Omega) \rightarrow L^2([0,T]\times\partial\Omega)$,
\begin{equation} \label{eq:R}
Ru(t,\cdot):=u(T-t,\cdot) ,\quad\quad 0<t<T;
\end{equation}
and the low-pass filter $J: L^2([0,2T]\times\partial\Omega) \rightarrow L^2([0,T]\times\partial\Omega)$
\begin{equation} \label{eq:J}
Jf(t,\cdot):=\frac{1}{2}\int^{2T-t}_t f(\tau,\cdot) \,d\tau,\quad\quad 0<t<T.
\end{equation}
We write $P_T: L^2((0,2T)\times\partial\Omega) \rightarrow L^2((0,T)\times\partial\Omega)$
for the orthogonal projection via restriction. Its adjoint operator $P^\ast_T: L^2((0,T)\times\partial\Omega) \rightarrow L^2((0,2T)\times\partial\Omega)$
is the extension by zero from $(0,T)$ to $(0,2T)$. Let $\mathcal{T}_D$ and $\mathcal{T}_N$ be the Dirichlet and Neumann trace operators respectively, that is,
$$
\mathcal{T}_D u(t,\cdot) = u(t,\cdot)|_{\partial\Omega}, \quad\quad\quad \mathcal{T}_N u(t,\cdot) = \partial_\nu u(t,\cdot)|_{\partial\Omega}.
$$

\begin{lemma} \label{thm:id}
Let $u^f$ be the solution of \eqref{eq:ibvp} with $f\in C^\infty_c((0,2T)\times\partial\Omega)$. Suppose $v(t,x)\in C^\infty((0,2T)\times\Omega)$ satisfies the wave equation
$$
(\partial^2_t - c^2(x)\Delta) v(t,x) = 0, \quad\quad\quad \text{ in } (0,2T)\times\Omega
$$
Then
$$
(u^f(T), v(T))_{L^2(\Omega,c^{-2}dx)} = (P_T f,J\mathcal{T}_D v)_{L^2((0,T)\times\partial\Omega)} - (P_T(\Lambda_c f),J\mathcal{T}_N v)_{L^2((0,T)\times\partial\Omega)}.
$$
where $\nu$ is the unit outer normal vector field on $\partial\Omega$.
\end{lemma}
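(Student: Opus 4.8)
The plan is to realize the left-hand side as the value at the diagonal point $(T,T)$ of the two-time correlation function
$$
\psi(s,t) := (u^f(s), v(t))_{L^2(\Omega,c^{-2}dx)},
$$
and to derive a closed one-dimensional wave equation for $\psi$ in the variables $(s,t)$; this is the Blagovestchenskii-type identity adapted to the weight $c^{-2}dx$. First I would differentiate twice under the integral sign and use that $u^f$ and $v$ both solve $\partial_t^2 = c^2\Delta$, so that the weight $c^{-2}$ cancels the speed and leaves $\partial_s^2\psi = \int_\Omega (\Delta u^f(s))\,v(t)\,dx$ and $\partial_t^2\psi = \int_\Omega u^f(s)\,(\Delta v(t))\,dx$. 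Subtracting and applying Green's second identity on $\Omega$ converts the volume term into a boundary term, giving
$$
(\partial_s^2 - \partial_t^2)\psi(s,t) = \int_{\partial\Omega}\big(\mathcal{T}_N u^f(s)\,\mathcal{T}_D v(t) - \mathcal{T}_D u^f(s)\,\mathcal{T}_N v(t)\big)\,dS.
$$
Substituting the boundary data of $u^f$, namely $\mathcal{T}_N u^f = f$ and $\mathcal{T}_D u^f = \Lambda_c f$, turns the right-hand side into an explicitly known source $G(s,t)$ built from $f$, $\Lambda_c f$, and the traces $\mathcal{T}_D v$, $\mathcal{T}_N v$.

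Next I would record the initial conditions. Since $u^f(0) = \partial_t u^f(0) = 0$, the definition of $\psi$ gives $\psi(0,t) = 0$ and $\partial_s\psi(0,t) = 0$ for all $t$. Thus $\psi$ solves the inhomogeneous $1+1$ wave equation $(\partial_s^2 - \partial_t^2)\psi = G$ with vanishing Cauchy data on $\{s=0\}$, where $s$ plays the role of time and $t$ of space. The homogeneous part of the Duhamel representation therefore drops out, leaving only the source integral
$$
\psi(s,t) = \frac{1}{2}\int_0^s\int_{t-(s-\sigma)}^{t+(s-\sigma)} G(\sigma,\tau)\,d\tau\,d\sigma.
$$

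Finally I would evaluate at $s = t = T$. The characteristic triangle with apex $(T,T)$ and base on $\{\sigma = 0\}$ collapses the inner limits to $\tau\in[\sigma, 2T-\sigma]$, so that
$$
\psi(T,T) = \int_0^T \frac{1}{2}\int_{\sigma}^{2T-\sigma} G(\sigma,\tau)\,d\tau\,d\sigma.
$$
The key observation is that the inner average is exactly the low-pass filter $J$ acting in the second time variable and evaluated on the diagonal $\tau=\sigma$; pulling $J$ through the $L^2(\partial\Omega)$ pairing, which it commutes with since it only integrates in time, turns the integrand into $(f(\sigma), J\mathcal{T}_D v(\sigma))_{L^2(\partial\Omega)} - (\Lambda_c f(\sigma), J\mathcal{T}_N v(\sigma))_{L^2(\partial\Omega)}$, and the outer $\sigma$-integral over $(0,T)$ reassembles the $L^2((0,T)\times\partial\Omega)$ inner products together with the projection $P_T$. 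This yields the claimed identity. I expect the main obstacle to be bookkeeping rather than analysis: one must verify that the Duhamel domain of dependence for the apex $(T,T)$ produces precisely the averaging window $[\sigma,2T-\sigma]$ defining $J$, and keep the roles of the two time variables straight when commuting $J$ past the spatial pairing. The differentiation under the integral and Green's identity are routine here, since $c\in C^\infty$, $f\in C^\infty_c$, and $v\in C^\infty$ guarantee that all traces and interchanges are justified.
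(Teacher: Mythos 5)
Your proposal is correct and follows essentially the same route as the paper's own proof: both define the two-time correlation $(u^f(s),v(t))_{L^2(\Omega,c^{-2}dx)}$, use the $c^{-2}$ weight plus Green's second identity to obtain the inhomogeneous $1{+}1$-dimensional wave equation with source built from $f$, $\Lambda_c f$, and the traces of $v$, and then apply Duhamel's formula with vanishing Cauchy data at $s=0$, evaluating at the apex $(T,T)$ so that the characteristic triangle produces exactly the averaging window $[\sigma,2T-\sigma]$ that defines $J$. No gaps to report.
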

\begin{proof}
Define 
$$
I(t,s) := (u^f(t), v(s))_{L^2(\Omega,c^{-2}dx)}.
$$
We compute
\begin{align}
 & (\partial^2_t - \partial^2_s) I(t,s) \nonumber \\
= & (\Delta u^f(t), v(s))_{L^2(\Omega)} - (u^f(t),\Delta v(s))_{L^2(\Omega)} \nonumber \\
= & (f(t), \mathcal{T}_D v(s))_{L^2(\partial\Omega)} - (\Lambda_c f(t),\mathcal{T}_N v(s))_{L^2(\partial\Omega)}, \label{eq:id1}
\end{align}
where the last equality follows from integration by parts.
On the other hand, $I(0,s)=\partial_t I(0,s) = 0$ since $u^f(0,x)=\partial_t u^f(0,x) = 0$. Solve the inhomogeneous $1$D wave equation \eqref{eq:id1} together with these initial conditions to obtain 
\begin{align*}
I(T,T) & = \frac{1}{2} \int^T_0 \int^{2T-t}_{t} \left[ (f(t), \mathcal{T}_D v(\sigma))_{L^2(\partial\Omega)} - (\Lambda_c f(t),\mathcal{T}_N v(\sigma))_{L^2(\partial\Omega)} \right] \,d\sigma dt \vspace{1ex}\\
 & = \int^T_0  [ ( f(t) ,  \frac{1}{2}\int^{2T-t}_{t} \mathcal{T}_D v(\sigma) \,d\sigma)_{L^2(\partial\Omega)} - ( \Lambda_c f(t),  \frac{1}{2}\int^{2T-t}_{t} \mathcal{T}_N v(\sigma) \,d\sigma)_{L^2(\partial\Omega)} ] \,dt \vspace{1ex} \\
 & = (P_T f,J \mathcal{T}_D v)_{L^2((0,T)\times\partial\Omega)} - (P_T(\Lambda_c f),J \mathcal{T}_N v)_{L^2((0,T)\times\partial\Omega)}.
\end{align*}
\end{proof}

We will use the lemma to derive two results. The first is the Blagove\u{s}\u{c}enski\u{ı}’s identity. 
To this end, denote by $\Lambda_{c,T}$ the ND map defined as in \eqref{eq:ibvp} \eqref{eq:NDmap} yet with $2T$ replaced by $T$. It can be easily verified from integration by parts that its adjoint operator (with respect to the inner product in $L^2((0,T)\times\partial\Omega)$) is $\Lambda^\ast_{c,T} = R \Lambda_{c,T} R$ where $R$ is the time reversal operator \eqref{eq:R}.

Introduce the \textit{connecting operator}
\begin{equation} \label{eq:K}
K:= J \Lambda_c P^\ast_T - R \Lambda_{c,T} R J P^\ast_T.
\end{equation}
The operator $K$ connects inner-products between waves in the interior to measurements on the boundary. It is the principal object of the boundary control method \cite{belishev2007recent}.
Moreover, $K$ is a compact operator since 
$\Lambda_{c,T}: L^2((0,T)\times\partial\Omega)\rightarrow H^{2/3}((0,T)\times\partial\Omega)$ is smoothing, see~\cite{tataru1998regularity}.

The Blagove\u{s}\u{c}enski\u{ı}’s identity we will establish is slightly different from its original form~\cite{blagoveshchenskii1967inverse}. Instead, it is a reformulation that has been previously used in \cite{bingham2008iterative, oksanen2013solving, de2018exact}.

\begin{prop} \label{thm:waveinner}
Let $u^f, u^h$ be the solutions of \eqref{eq:ibvp} with Neumann traces $f, h \in L^2((0,T)\times\partial\Omega)$, respectively. Then
\begin{equation} \label{eq:ufuh}
(u^f(T), u^h(T))_{L^2(\Omega,c^{-2}dx)} = (f,Kh)_{L^2((0,T)\times\partial\Omega)} = (Kf,h)_{L^2((0,T)\times\partial\Omega)}.
\end{equation}
In particular if $h=f$, one has
\begin{equation} \label{eq:ufnorm}
\|u^f(T)\|^2_{L^2(\Omega,c^{-2}dx)} = (f,Kf)_{L^2((0,T)\times\partial\Omega)} = (Kf,f)_{L^2((0,T)\times\partial\Omega)}.
\end{equation}
\end{prop}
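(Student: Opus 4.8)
The plan is to obtain \eqref{eq:ufuh} by a single application of Lemma~\ref{thm:id} with the auxiliary solution $v$ taken to be $u^h$ itself, followed by bookkeeping with the operators $P_T, P^\ast_T, R, J$. Since $u^h$ solves the same homogeneous wave equation on $(0,2T)\times\Omega$, it is a legitimate choice for $v$, and its traces can be read off directly: the Neumann trace is the driving data $\mathcal{T}_N u^h = P^\ast_T h$ (the extension of $h$ by zero), while the Dirichlet trace is $\mathcal{T}_D u^h = \Lambda_c P^\ast_T h$ by the very definition \eqref{eq:NDmap} of the ND map. Substituting these into Lemma~\ref{thm:id}, and using $P_T P^\ast_T = \mathrm{Id}$ to simplify the first slot, I expect to arrive at
$$
(u^f(T), u^h(T))_{L^2(\Omega,c^{-2}dx)} = (f,\, J\Lambda_c P^\ast_T h) - (P_T \Lambda_c P^\ast_T f,\, J P^\ast_T h),
$$
where all inner products are taken in $L^2((0,T)\times\partial\Omega)$.

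The first term already matches the first term of $(f,Kh)$ with $K$ as in \eqref{eq:K}. For the second term, I would first reduce $P_T\Lambda_c P^\ast_T$ to $\Lambda_{c,T}$: because the Neumann data $P^\ast_T f$ vanishes on $(T,2T)$, causality (well-posedness of the forward evolution, so that the solution on $[0,T]$ depends only on the data on $[0,T]$) guarantees that the Dirichlet trace of the corresponding solution on the window $(0,T)$ coincides with that of the analogous problem posed on $(0,T)$; that is, $P_T\Lambda_c P^\ast_T = \Lambda_{c,T}$. Then, invoking the adjoint relation $\Lambda^\ast_{c,T} = R\Lambda_{c,T} R$ recorded just before \eqref{eq:K}, I would move $\Lambda_{c,T}$ off the first argument,
$$
(\Lambda_{c,T} f,\, J P^\ast_T h) = (f,\, R\Lambda_{c,T} R\, J P^\ast_T h),
$$
so that the second term becomes exactly the second term of $(f,Kh)$. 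Adding the two contributions yields $(u^f(T),u^h(T)) = (f,Kh)$, and the companion identity $(u^f(T),u^h(T)) = (Kf,h)$ follows by exchanging the roles of $f$ and $h$ and using the symmetry of the left-hand inner product; in particular this exhibits $K$ as self-adjoint, and \eqref{eq:ufnorm} is the special case $h=f$.

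The one technical point requiring care is that Lemma~\ref{thm:id} is stated for smooth data, whereas here $f,h\in L^2((0,T)\times\partial\Omega)$. I would therefore first establish the identity for $f,h\in C^\infty_c((0,T)\times\partial\Omega)$ and then pass to the $L^2$ limit by density, relying on continuity of the solution map $f\mapsto u^f(T)$ from $L^2((0,T)\times\partial\Omega)$ to $L^2(\Omega,c^{-2}dx)$ (the sharp trace regularity for the wave equation underlying the smoothing estimate of~\cite{tataru1998regularity}) together with boundedness of $\Lambda_c$, $R$, $J$, and the projections. The substantive step, and the one I expect to be the crux, is the causality reduction $P_T\Lambda_c P^\ast_T=\Lambda_{c,T}$: it is precisely where the finite propagation speed of \eqref{eq:ibvp} enters and makes the connecting operator $K$ expressible purely in terms of the measured maps $\Lambda_c$ and $\Lambda_{c,T}$.
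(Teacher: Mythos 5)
Your proposal is correct and follows essentially the same route as the paper: apply Lemma~\ref{thm:id} with $v=u^h$, identify the traces $\mathcal{T}_N u^h = P^\ast_T h$ and $\mathcal{T}_D u^h = \Lambda_c P^\ast_T h$, use causality ($P_T \Lambda_c P^\ast_T = \Lambda_{c,T}$) and the adjoint relation $\Lambda^\ast_{c,T}=R\Lambda_{c,T}R$ to recognize $(f,Kh)$, obtain the second equality by exchanging $f$ and $h$, and extend from $C^\infty_c$ to $L^2$ data by density and continuity. The only differences are cosmetic: you make explicit the causality step and the boundedness of $f\mapsto u^f(T)$ (which the paper attributes to~\cite{lasiecka1991regularity} rather than~\cite{tataru1998regularity}), both of which the paper uses implicitly or records elsewhere.
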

\begin{proof}
We first prove this for $f, h \in C^\infty_c((0,T)\times\partial\Omega)$. Apply Lemma \ref{thm:id} to $u^f$ and $v=u^h$ and notice that $\mathcal{T}_D u^h = \Lambda_c P^\ast_T h$ and $\mathcal{T}_N u^h = P^\ast_T h$. One has
\begin{align*}
(u^f(T), u^h(T))_{L^2(\Omega,c^{-2}dx)} & = (P_T f,J \Lambda_c P^\ast_T h)_{L^2((0,T)\times\partial\Omega)} - (P_T(\Lambda_c f),J P^\ast_T h)_{L^2((0,T)\times\partial\Omega)} \\
 & = (f,J \Lambda_c P^\ast_T h)_{L^2((0,T)\times\partial\Omega)} - (\Lambda_{c,T} f, J P^\ast_T h)_{L^2((0,T)\times\partial\Omega)} \\
 & = (f,J \Lambda_c P^\ast_T h)_{L^2((0,T)\times\partial\Omega)} - (f, R \Lambda_{c,T} R J P^\ast_T h)_{L^2((0,T)\times\partial\Omega)} \\
 & = (f,Kh)_{L^2((0,T)\times\partial\Omega)}
\end{align*}
where we have used that $P_T(\Lambda_c f) = \Lambda_{c,T}f$ and that $\Lambda^\ast_{c,T} = R \Lambda_{c,T} R$ in $L^2((0,T)\times\partial\Omega)$. This establishes the first equality in \eqref{eq:ufuh}. Interchanging $f$ and $h$ yields the second equality in \eqref{eq:ufuh}.

For general $f, h \in L^2((0,T)\times\partial\Omega)$, simply notice that $K$ is a continuous operator and that compactly supported smooth functions are dense in $L^2$. The proof is completed.
\end{proof}


The Blagove\u{s}\u{c}enski\u{ı}’s identity relates inner products of waves to boundary measurement. Next, we derive an identity that allows computation of inner products between waves and harmonic functions from boundary data. 
We introduce another operator $B$ that is critical for our reconstruction: 
\begin{equation} \label{eq:B}
B := J \mathcal{T}_D - R \Lambda_{c,T} R J \mathcal{T}_N.
\end{equation}
\begin{prop} \label{thm:waveharm}
Let $u^f$ be the solutions of \eqref{eq:ibvp} with Neumann traces $f\in L^2((0,T)\times\partial\Omega)$. For any harmonic function $\phi\in C^\infty(\Omega)$, one has
$$
(u^f(T), \phi)_{L^2(\Omega,c^{-2}dx)} = (f,B\phi)_{L^2((0,T)\times\partial\Omega)}.
$$
\end{prop}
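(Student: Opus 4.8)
The plan is to apply Lemma~\ref{thm:id} with the time-independent function $v(t,x):=\phi(x)$. The key observation is that a harmonic function, regarded as a function on $(0,2T)\times\Omega$ that is constant in $t$, automatically solves the wave equation: $\partial^2_t v = 0$ since $v$ does not depend on $t$, while $c^2\Delta v = c^2\Delta\phi = 0$ since $\phi$ is harmonic, so $(\partial^2_t - c^2\Delta)v = 0$. Hence $v$ is an admissible test function for the lemma, with $v(T)=\phi$ and with $\mathcal{T}_D v = \phi|_{\partial\Omega}$, $\mathcal{T}_N v = \partial_\nu\phi|_{\partial\Omega}$ both constant in time (so that, e.g., $J\mathcal{T}_D\phi(t,\cdot) = (T-t)\phi|_{\partial\Omega}$ is a well-defined element of $L^2((0,T)\times\partial\Omega)$).

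Following the strategy of Proposition~\ref{thm:waveinner}, I would first establish the identity for $f\in C^\infty_c((0,T)\times\partial\Omega)$ and then pass to general $f\in L^2$ by density. For such $f$ the extension by zero $P^\ast_T f$ lies in $C^\infty_c((0,2T)\times\partial\Omega)$, and $u^f$ is understood as the solution driven by $P^\ast_T f$. Applying Lemma~\ref{thm:id} to this solution and to $v=\phi$ gives
\begin{equation*}
(u^f(T),\phi)_{L^2(\Omega,c^{-2}dx)} = (P_T P^\ast_T f, J\mathcal{T}_D\phi)_{L^2((0,T)\times\partial\Omega)} - (P_T(\Lambda_c P^\ast_T f), J\mathcal{T}_N\phi)_{L^2((0,T)\times\partial\Omega)}.
\end{equation*}
The two reductions I would then invoke are exactly those used in the proof of Proposition~\ref{thm:waveinner}: $P_T P^\ast_T f = f$, and $P_T(\Lambda_c P^\ast_T f)=\Lambda_{c,T}f$ by causality (the Dirichlet trace on $(0,T)$ depends only on the Neumann data on $(0,T)$). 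This collapses the right-hand side to $(f,J\mathcal{T}_D\phi) - (\Lambda_{c,T}f, J\mathcal{T}_N\phi)$.

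To reach the operator $B$ of \eqref{eq:B}, I would move $\Lambda_{c,T}$ onto the second factor using its adjoint $\Lambda^\ast_{c,T}=R\Lambda_{c,T}R$, so that $(\Lambda_{c,T}f, J\mathcal{T}_N\phi) = (f, R\Lambda_{c,T}R J\mathcal{T}_N\phi)$. Collecting the terms yields
\begin{equation*}
(u^f(T),\phi)_{L^2(\Omega,c^{-2}dx)} = (f, (J\mathcal{T}_D - R\Lambda_{c,T}R J\mathcal{T}_N)\phi)_{L^2((0,T)\times\partial\Omega)} = (f, B\phi)_{L^2((0,T)\times\partial\Omega)},
\end{equation*}
which is the claimed identity for smooth $f$.

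I expect the only point requiring care — rather than the purely algebraic manipulation, which is a direct specialization of Lemma~\ref{thm:id} — to be the passage to $f\in L^2((0,T)\times\partial\Omega)$. The right-hand side $(f,B\phi)$ is manifestly continuous (indeed linear) in $f$ for fixed $\phi$, so it suffices to know that $f\mapsto u^f(T)$ is bounded from $L^2((0,T)\times\partial\Omega)$ into $L^2(\Omega,c^{-2}dx)$. This is already at hand: the identity $\|u^f(T)\|^2_{L^2(\Omega,c^{-2}dx)}=(f,Kf)$ of Proposition~\ref{thm:waveinner}, together with boundedness of the compact operator $K$, gives $\|u^f(T)\|\le\|K\|^{1/2}\|f\|$. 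Both sides therefore extend continuously from the dense class $C^\infty_c((0,T)\times\partial\Omega)$ to all of $L^2((0,T)\times\partial\Omega)$, and the identity persists, completing the proof.
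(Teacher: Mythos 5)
Your proposal is correct and follows essentially the same route as the paper: apply Lemma~\ref{thm:id} with $v=\phi$ (a time-independent solution of the wave equation), reduce via $P_T(\Lambda_c f)=\Lambda_{c,T}f$ and the adjoint identity $\Lambda^\ast_{c,T}=R\Lambda_{c,T}R$, and conclude by density. Your added justification that $f\mapsto u^f(T)$ is bounded via $\|u^f(T)\|^2_{L^2(\Omega,c^{-2}dx)}=(f,Kf)\le\|K\|\,\|f\|^2$ is a nice touch that makes the density step more explicit than the paper's one-line appeal to continuity, but it is a refinement of detail, not a different argument.
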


\begin{proof}
We only need to prove this for $f\in C^\infty_c((0,T)\times\partial\Omega)$ by the continuity of $B$ and density of compactly supported functions in $L^2$. Apply Lemma \ref{thm:id} to $u^f$ and $v=\phi$ (since any harmonic function is a time-independent solution of the acoustic wave equation). 
One has
\begin{align*}
(u^f(T), \phi)_{L^2(\Omega,c^{-2}dx)} = & (f,J \mathcal{T}_D\phi)_{L^2((0,T)\times\partial\Omega)} - (P_T(\Lambda_c f),J \mathcal{T}_N \phi )_{L^2((0,T)\times\partial\Omega)} \\
 = & (f,J \mathcal{T}_D \phi)_{L^2((0,T)\times\partial\Omega)} - (\Lambda_{c,T} f, J \mathcal{T}_N \phi)_{L^2((0,T)\times\partial\Omega)} \\
 = & (f,J \mathcal{T}_D \phi)_{L^2((0,T)\times\partial\Omega)} - (f, R \Lambda_{c,T} R J \mathcal{T}_N \phi)_{L^2((0,T)\times\partial\Omega)}.
\end{align*}
\end{proof}

Proposition \ref{thm:waveharm} suggests a way to reconstruct the wave speed $c$ from the ND map $\Lambda_c$: if for any harmonic function $\psi$, one can find an explicit sequence $f_\alpha$ such that $u^{f_\alpha}(T) \rightarrow \psi$ as $\alpha\rightarrow 0$ in $L^2(\Omega, c^{-2}dx)$, then
\begin{equation} \label{eq:limit}
(\psi, \phi)_{L^2(\Omega,c^{-2}dx)} = \lim_{\alpha\rightarrow  0}(u^{f_\alpha}(T), \phi)_{L^2(\Omega,c^{-2}dx)} 
= \lim_{\alpha\rightarrow 0}(f_\alpha,B\phi)_{L^2((0,T)\times\partial\Omega)}.
\end{equation}
The right hand side can be computed from $\Lambda_c$, see \eqref{eq:B}. Thus the integral
\begin{equation} \label{eq:innerproduct}
(\psi, \phi)_{L^2(\Omega,c^{-2}dx)} = \int_\Omega \psi \phi \, c^{-2}(x) \,dx 
\end{equation}
is known for all harmonic functions $\psi$ and $\phi$. For any fixed vectors $\xi,\eta\in\mathbb{R}^n$ with $|\xi|=|\eta|$ and $\xi\perp\eta$, choose the complex harmonic functions
\begin{equation} \label{eq:phipsi}
\psi(x):=e^{\frac{i}{2}(-\xi+i\eta)\cdot x}, \quad\quad \phi(x):=e^{\frac{i}{2}(-\xi-i\eta)\cdot x}.
\end{equation}
Then $\psi\phi = e^{i\xi\cdot x}$ and one recovers $\mathcal{F}(c^{-2})$ -- the Fourier transform of $c^{-2}$ -- by varying $\xi$.  This reconstructs $c$.

It remains to construct an explicit sequence $f_\alpha$ such that $u^{f_\alpha}(T) \rightarrow \psi$ in $L^2(\Omega, c^{-2}dx)$as $\alpha\rightarrow 0$. We will adopt Tikhonov regularization for the construction. Before that, we record a lemma that will be used in the subsequent analysis.
\begin{lemma}[{\cite[Lemma 1]{oksanen2013solving}}] \label{thm:projection}
Let $A:X\rightarrow Y$ be a bounded linear operator between two Hilbert spaces $X$ and $Y$. For any $y\in Y$, let $\alpha>0$ be a constant and $x_\alpha := (A^\ast A + \alpha)^{-1} A^\ast y$. Then 
$$
A x_\alpha \rightarrow P_{\overline{Ran(A)}}y \quad\quad \text{ as } \alpha\rightarrow 0
$$
where $P_{\overline{Ran(A)}}y$ denotes the orthogonal projection of $y$ onto the closure of the range of $A$.
\end{lemma}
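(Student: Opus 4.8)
The plan is to reduce the claim to the behaviour of a single positive self-adjoint operator and then control the limit by the spectral theorem. First I would observe that $A^\ast A + \alpha$ is bounded below by $\alpha$ and hence boundedly invertible, so that $x_\alpha$ is well defined. The crucial algebraic step is the intertwining identity $(AA^\ast + \alpha)A = A(A^\ast A + \alpha)$, which after inverting both factors gives $A(A^\ast A + \alpha)^{-1} = (AA^\ast + \alpha)^{-1}A$. Substituting into the definition of $x_\alpha$ yields
$$
A x_\alpha = (AA^\ast + \alpha)^{-1} A A^\ast y = g_\alpha(B)\, y, \qquad B := AA^\ast, \quad g_\alpha(\lambda) := \frac{\lambda}{\lambda+\alpha},
$$
so the question is transferred to the single bounded positive self-adjoint operator $B$ on $Y$.

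Next I would pin down the limit. Because $AA^\ast z = 0$ forces $\|A^\ast z\|^2 = (AA^\ast z, z) = 0$, one has $\ker B = \ker A^\ast$, and therefore $\overline{Ran(A)} = (\ker A^\ast)^\perp = (\ker B)^\perp$, so $P_{\overline{Ran(A)}} = I - P_{\ker B}$. It thus remains to show $g_\alpha(B) y \to (I - P_{\ker B}) y$.

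For this I would use the spectral resolution $B = \int_0^{\|B\|} \lambda \, dE_\lambda$, giving $g_\alpha(B) y = \int_0^{\|B\|} g_\alpha(\lambda)\, dE_\lambda y$. Since $0 \le g_\alpha \le 1$ and $g_\alpha(\lambda) \to \mathbf{1}_{\{\lambda>0\}}$ pointwise as $\alpha\to 0$, dominated convergence applied to the finite scalar measure $d(E_\lambda y, y)$ yields norm convergence $g_\alpha(B) y \to \int_{(0,\|B\|]} dE_\lambda y = (I - E_{\{0\}}) y = (I - P_{\ker B}) y$, which is exactly the claim. A spectral-theorem-free alternative records the error as $g_\alpha(B) y - (I - P_{\ker B}) y = -\alpha(B+\alpha)^{-1}(I - P_{\ker B}) y$, proves it tends to $0$ on the dense subspace $Ran(B)$ via $\|\alpha(B+\alpha)^{-1}B\| \le \alpha$, and extends to the closure of $Ran(B)$ by the uniform bound $\|\alpha(B+\alpha)^{-1}\| \le 1$ together with a routine $\varepsilon/3$ argument.

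The one genuine obstacle is that $0$ need not be isolated in the spectrum of $B$; it may belong to the continuous spectrum, so there is no spectral gap and the convergence is not uniform. This is precisely what rules out a naive resolvent bound and forces either the dominated-convergence argument over the spectral measure or, equivalently, the density argument on $Ran(B)$. Once this convergence is secured, identifying the limit with $P_{\overline{Ran(A)}} y$ is immediate from $\ker B = \ker A^\ast$.
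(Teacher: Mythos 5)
Your proof is correct, but there is nothing in the paper to compare it against: the paper states this lemma as a quoted result, citing Lemma 1 of the Oksanen reference, and gives no proof of its own. Evaluated on its own merits, your argument is complete and sound. The intertwining identity $A(A^\ast A+\alpha)^{-1}=(AA^\ast+\alpha)^{-1}A$ correctly reduces the problem to the strong convergence $g_\alpha(B)y\to (I-P_{\ker B})y$ for the single positive self-adjoint operator $B=AA^\ast$ with $g_\alpha(\lambda)=\lambda/(\lambda+\alpha)$; the identification $\ker B=\ker A^\ast$ and hence $P_{\overline{Ran(A)}}=I-P_{\ker B}$ is right; and both of your closing arguments work: dominated convergence against the finite spectral measure $d(E_\lambda y,y)$ (using $0\le g_\alpha\le 1$ and $g_\alpha\to \mathbf{1}_{\{\lambda>0\}}$ pointwise, together with $E(\{0\})=P_{\ker B}$), or the spectral-theorem-free route via the identity $g_\alpha(B)y-(I-P_{\ker B})y=-\alpha(B+\alpha)^{-1}(I-P_{\ker B})y$, the bound $\|\alpha(B+\alpha)^{-1}B\|\le\alpha$ on the dense subspace $Ran(B)$ of $\overline{Ran(B)}=(\ker B)^\perp$, and the uniform bound $\|\alpha(B+\alpha)^{-1}\|\le 1$. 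You also correctly flag the one genuine subtlety—that $0$ need not be isolated in the spectrum of $B$, so only strong (not norm) convergence can be expected, which is why a density or dominated-convergence argument is unavoidable. This is the standard proof of this Tikhonov-regularization fact and is essentially what the cited source does; the paper under review simply chose to import the statement rather than reprove it.
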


Next, we introduce the \textit{control operator} 
$$
Wf := u^f(T).
$$
where $u^f$ is the solution of \eqref{eq:ibvp}. According to~\cite{lasiecka1991regularity},
$W: L^2((0,T)\times\partial\Omega) \rightarrow L^2(\Omega)$ is a bounded linear operator.
Moreover, Tataru's theorem in \cite{tataru1995unique} implies that $W$ has dense range in $L^2(\Omega)$.
It follows from Proposition~\ref{thm:waveinner} that $K=W^* W$. It is also easy to verify that $W^\ast \psi = B\psi$ for any harmonic function $\psi$.


\begin{prop} \label{thm:Tik}
For any harmonic function $\psi$, the following minimization problem with parameter $\alpha>0$:
$$
f_\alpha := \argmin_{f} \|Wf-\psi\|^2_{L^2(\Omega,c^{-2}dx)} + \alpha \|f\|^2_{L^2(0,T)\times\partial\Omega}.
$$
has a unique solution $f_\alpha \in L^2((0,T)\times\partial\Omega)$. This solution satisfies the linear equation 
\begin{equation} \label{eq:normal}
(K+\alpha) f_\alpha = B\psi.
\end{equation}
Moreover, $u^{f_\alpha}(T) \rightarrow \psi$ as $\alpha\rightarrow 0$ in $L^2(\Omega, c^{-2}dx)$. 
\end{prop}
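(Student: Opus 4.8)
The plan is to treat this as a textbook Tikhonov regularization statement, leaning on the two structural identities already established, namely $K = W^\ast W$ and $W^\ast\psi = B\psi$ for harmonic $\psi$, with all adjoints understood with respect to the weighted inner product $(\cdot,\cdot)_{L^2(\Omega,c^{-2}dx)}$ on the target side and the standard inner product on the source side. These let me recast everything intrinsic to $W$ in terms of the boundary-data operators $K$ and $B$.

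First I would dispatch well-posedness and the normal equation together. The Tikhonov functional $f\mapsto \|Wf-\psi\|^2_{L^2(\Omega,c^{-2}dx)} + \alpha\|f\|^2$ is strictly convex and coercive for $\alpha>0$, since the regularization term dominates; its unique minimizer (if one exists) is characterized by the first-order optimality condition $W^\ast(Wf_\alpha-\psi)+\alpha f_\alpha = 0$. Substituting $K=W^\ast W$ and $W^\ast\psi = B\psi$ turns this into $(K+\alpha)f_\alpha = B\psi$. Because $K=W^\ast W\ge 0$ is self-adjoint, $K+\alpha$ is boundedly invertible with $\|(K+\alpha)^{-1}\|\le 1/\alpha$, so this equation has the unique solution $f_\alpha=(K+\alpha)^{-1}B\psi$. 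This simultaneously yields existence, uniqueness, and the displayed normal equation \eqref{eq:normal}.

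For the convergence $u^{f_\alpha}(T)\to\psi$, I would apply Lemma~\ref{thm:projection} with $A=W$, $X=L^2((0,T)\times\partial\Omega)$, $Y=L^2(\Omega,c^{-2}dx)$, and $y=\psi$. Since $u^{f_\alpha}(T)=Wf_\alpha$ and $f_\alpha=(W^\ast W+\alpha)^{-1}W^\ast\psi$, the lemma gives $Wf_\alpha\to P_{\overline{Ran(W)}}\psi$ as $\alpha\to 0$. It then remains only to show that this projection is the identity on $\psi$, i.e.\ that $\overline{Ran(W)}=L^2(\Omega,c^{-2}dx)$.

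The one point needing a little care — and the main, albeit mild, obstacle — is this density step, because Tataru's theorem supplies density of $Ran(W)$ in the \emph{unweighted} space $L^2(\Omega)$, whereas the projection must be taken in the \emph{weighted} space $L^2(\Omega,c^{-2}dx)$. I would resolve this by noting that since $c$ is bounded away from $0$ and $\infty$, the weight $c^{-2}$ is bounded above and below, so the weighted and unweighted norms are equivalent and the two spaces coincide as topological vector spaces; density of the range in one is therefore equivalent to density in the other, giving $\overline{Ran(W)}=L^2(\Omega,c^{-2}dx)$. Because the harmonic functions in play, such as the entire exponentials in \eqref{eq:phipsi}, lie in $L^2(\Omega,c^{-2}dx)$ for bounded $\Omega$, the projection is well-defined and $P_{\overline{Ran(W)}}\psi=\psi$, whence $u^{f_\alpha}(T)\to\psi$, completing the argument.
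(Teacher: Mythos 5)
Your proposal is correct and follows essentially the same route as the paper: the paper likewise derives the normal equation from the identities $K = W^\ast W$ and $B\psi = W^\ast\psi$ (it expands the Tikhonov functional via Propositions~\ref{thm:waveinner} and~\ref{thm:waveharm} and sets the Fr\'echet derivative of the resulting quadratic form to zero, which is the same computation as your first-order optimality condition, with existence and uniqueness cited from a textbook result rather than argued via convexity and invertibility of $K+\alpha$), and the convergence step is identical --- Lemma~\ref{thm:projection} applied to $A=W$ together with Tataru's density theorem. Your explicit observation that density of $Ran(W)$ in the unweighted $L^2(\Omega)$ transfers to the weighted space $L^2(\Omega,c^{-2}dx)$ by equivalence of norms is a point the paper passes over silently, and it is a worthwhile clarification rather than a deviation in method.
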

\begin{proof}
The functional to be minimized is
$$
F_\alpha(f) := \|Wf-\psi\|^2_{L^2(\Omega,c^{-2}dx)} + \alpha \|f\|^2_{L^2((0,T)\times\partial\Omega)}.
$$
As $W: L^2((0,T)\times\partial\Omega) \rightarrow L^2(\Omega)$ is bounded and linear, \cite[Theorem 2.11]{kirsch2011introduction} claims that $F_\alpha$ has a unique minimizer, named $f_\alpha$, in $L^2((0,T)\times\partial\Omega)$.

To derive the normal equation the minimizer obeys, we rewrite 
\begin{align*}
F_\alpha(f) = & \|u^f(T)-\psi\|^2_{L^2(\Omega,c^{-2}dx)} + \alpha \|f\|^2_{L^2((0,T)\times\partial\Omega)} \\
 = & \|u^f(T)\|^2_{L^2(\Omega,c^{-2}dx)} - 2(u^f(T),\psi)_{L^2(\Omega,c^{-2}dx)} + \|\psi\|^2_{L^2(\Omega,c^{-2}dx)} + \alpha \|f\|^2_{L^2(0,T)\times\partial\Omega} \\
 = & (f,Kf)_{L^2((0,T)\times\partial\Omega)} - 2 (f, B\psi)_{L^2((0,T)\times\partial\Omega)} + \|\psi\|^2_{L^2(\Omega,c^{-2}dx)} + \alpha \|f\|^2_{L^2((0,T)\times\partial\Omega)} \\
 = &  (f,(K+\alpha)f)_{L^2((0,T)\times\partial\Omega)} - 2 (f, B\psi)_{L^2((0,T)\times\partial\Omega)} + \|\psi\|^2_{L^2(\Omega,c^{-2}dx)}
\end{align*}
where we have used Proposition \ref{thm:waveinner} and Proposition \ref{thm:waveharm} in the second but last line. This is a bilinear form of $f$ whose Frech\'{e}t derivative is
$$
F'(f) = 2(K+\alpha)f - 2B\psi.   
$$
The minimizer satisfies $F'(f_\alpha)=0$, hence \eqref{eq:normal}.
%

Finally, since $K=W^\ast W$ and $B \psi = W^\ast \psi$ (see the remark before Proposition~\ref{thm:Tik}), We conclude from Lemma~\ref{thm:projection} that $W f_\alpha \rightarrow P_{\overline{Ran(W)}}\psi$ in $L^2(\Omega,c^{-2}dx)$ as $\alpha\rightarrow 0$. Tataru's theorem \cite{tataru1995unique} claims that the range of $W$ is dense in $L^2(\Omega)$, hence $P_{\overline{Ran(W)}}\psi = \psi$. 
\end{proof}

\bigskip

Summarizing the discussion in this section, we have proved global convergence of the following reconstruction algorithm:

\begin{algorithm}[!hbt]
	\caption{(Non-Iterative Reconstruction Algorithm for Acoustic IBVP). \\
	\textit{Input: low-pass filter $J$, time-reversal operator $R$, projection operator $P_T$, ND map $\Lambda_c$}\\
	\textit{Output: wave speed $c$}
	} \label{alg:main}
	\begin{algorithmic}[1]
		\STATE Assemble the connecting operator $K= J \Lambda_c P^\ast_T - R \Lambda_{c,T} R J P^\ast_T$ (see~\eqref{eq:K}). 
		\STATE Assemble the operator $B = J \mathcal{T}_D - R \Lambda_{c,T} R J \mathcal{T}_N$ (see~\eqref{eq:B}).
	    \STATE Construct the harmonic function  $\psi(x)=e^{\frac{i}{2}(-\xi+i\eta)\cdot x}$ (see~\eqref{eq:phipsi}) and solve the linear system $(K+\alpha)f_\alpha = B\psi$, (see~\eqref{eq:normal}).
	    \STATE Construct the harmonic function $\phi(x):=e^{\frac{i}{2}(-\xi-i\eta)\cdot x}$ (see~\eqref{eq:phipsi}) and compute the Fourier projection
	    $$
	    \int_\Omega e^{-i\xi\cdot x} c^{-2}(x) \,dx = \lim_{\alpha\rightarrow 0}(f_\alpha,B\phi)_{L^2((0,T)\times\partial\Omega)}
	    $$
	    through the limiting process, (see~\eqref{eq:limit}).
        \STATE Repeat the above steps with various $\xi$ to recover the Fourier transform $\mathcal{F}(c^{-2})$.
	    \STATE Invert the Fourier transform to recover $c^{-2}$, and eventually $c$.
	\end{algorithmic}
\end{algorithm}








\section{Algorithm Implementation} \label{sec:imple}

In this section, we provide details of our implementation of the algorithm using finite difference discretization.

\subsection{Forward Simulation.}

\textbf{Computational Domain and Grid.}
We take the computational domain $\Omega = [-1,1]\times [-1,1]$, and write $t \in [0,T]$ for the temporal variable and $(x,y)\in\Omega$ for the two spatial coordinates, respectively.
Let $0 = t_0 \leq t_1 \leq \dots \leq t_L = T$ be a partition of the interval $[0,T]$ with uniform spacing $\Delta t = \frac{T}{L}$. Let $-1 = x_0 \leq x_1 \leq \dots \leq x_I = 1$ be a partition of the interval $[-1,1]$ with uniform spacing $\Delta x = \frac{2}{I}$. 
Then the temporal grid points are $t_l = l \Delta t \in [0,T]$, $l=0,1,\dots, L$. The spatial grid points are $(x_i,y_j)\in\Omega$ with $x_i = x_0 + i \Delta x$ and $y_j = x_0 + j \Delta x$, $i,j = 0,1,\dots, I$. 
The total grid size is $(L+1)\times (I+1) \times (I+1)$.

We denote the collection of interior grid points by
$$
IGP := 
\{(t_l,x_i,y_j): -1< x_i < 1, \;  1< y_j < 1, \; i,j = 1, \dots, I, \quad
l = 0,1,\dots, L\},
$$
and the collection of boundary grid points by
$$
BGP := \{(t_l,x_i,y_j): |x_i|=1, \; |y_j|=1, \; i,j = 1, \dots, I, \quad
l = 0,1,\dots, L\}.
$$
Let $u$ be the solution to the initial-boundary value problem~\eqref{eq:ibvp}. The values of $u$ on the grid points are denoted by 
$$
u^l_{ij} := u(t_l, x_i, y_j), \quad\quad\quad 
l=0,1,\dots, L, \quad i,j=0,1,\dots, I.
$$

\textbf{Forward Solver.} We solve the inverse boundary value problem~\eqref{eq:ibvp} by discretizing the acoustic wave equation using the second-order central difference scheme. 
For the interior grid points, the second order temporal and spatial derivatives are approximated as
$$
\partial^2_t u (t_l,x_i,y_j) \approx
\frac{u^{l-1}_{i,j} + u^{l+1}_{i,j} - 2 u^{l}_{i,j}}{\Delta t^2};
$$
$$
\Delta u (t_l,x_i,y_j) \approx \frac{u^l_{i-1,j}+u^l_{i+1,j}+u^l_{i,j-1}+u^l_{i,j+1}-4u^l_{i,j}}{\Delta x^2},
$$
thus we can update the interior grid points by
$$
u^{l+1}_{i,j}=2u^l_{i,j}-u^{l-1}_{i,j}+c^2(x_i,y_j)\frac{\Delta t^2}{\Delta x^2}[u^l_{i-1,j}+u^l_{i+1,j}+u^l_{i,j-1}+u^l_{i,j+1}-4u^l_{i,j}],
$$
here we set $u^1_{i,j}=u^{-1}_{i,j}$ for the initial condition $\partial_tu|_{t=0}=0.$
For the boundary grid points, the boundary normal derivative (i.e, Neumann data) is computed using the forward/backward finite difference approximation with a second-order accuracy. For instance, for $i=0$,
$$
\partial_\nu u(t_l, x_0, y_j) \approx
- \frac{3 u^l_{0,j} - 4 u^l_{1,j} + u^l_{2,j} }{2\Delta x}.
$$
The restriction $\Delta t=\frac{\sqrt{2}}{2c_{max}}\Delta x$ is imposed to fulfill the Courant–Friedrichs–Lewy (CFL) condition.
The forward simulation is implemented on the spatial grid with $I=100$. This grid is finer than the one used in the reconstruction to avoid the inverse crime.


\textbf{Assembly of the Discrete Neumann-to-Dirichlet Map.}
The spatial boundary $\partial\Omega$ consists of $4I$ boundary grid points, thus the temporal boundary $[0,T]\times\partial\Omega$ contains $4I(L+1)$ boundary grid points in total. These boundary grid points are ordered in the lexicographical order to form a column vector, that is, a boundary grid point $(t_l,x_i,y_j)$ is ahead of another $(t_{l'},x_{i'},y_{j'})$ if and only if (1) $l<l'$; or (2) $l=l'$ and $i<i'$; or (3) $l=l',i=i',j< j'$. In this way, the discretized ND map is a $4I(L+1)\times 4I(L+1)$ square matrix, denoted by $[\Lambda_c] \in\mathbb{R}^{4I(L+1)\times 4I(L+1)}$. In order to find the matrix representation $[\Lambda_c]$, we place a unit source $f_{lij}$ on each boundary grid point $(t_l,x_i,y_j)\in BGP$ as the Neumann data and utilize the forward solver to obtain the resulting Dirichlet data on all the boundary grid points. Here $f_{lij}$ takes the value $1$ on $(t_l,x_i,y_j)$ and $0$ on all the other boundary grid points, see Figure~\ref{fig:structure_NDmap} for an image of the ND map.




\begin{figure}[!htb]
    \centering
    \includegraphics[width=0.5\textwidth]{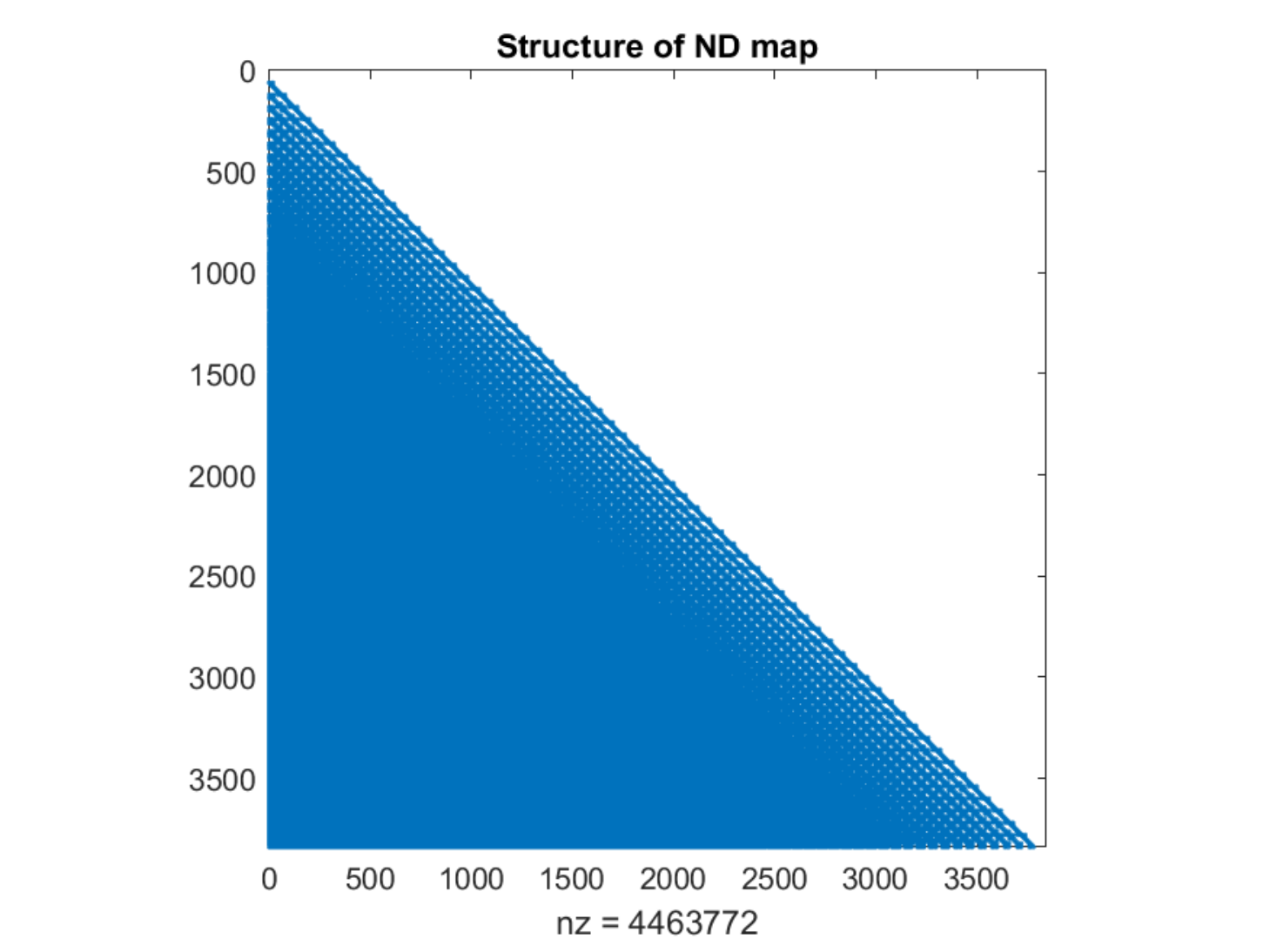}
    \caption{The structure of $[\Lambda_c]$ with $I=15$ and $L=63$. nz is the number of nonzero elements in the matrix.}
    \label{fig:structure_NDmap}
\end{figure}



\subsection{Reconstruction Algorithm.}


\textbf{Discretization of the Connecting Operator $K$.}
First, we discretize the operators in the definition of $K$, see~\eqref{eq:K}. For the filtering operator $J$, the integral in its definition~\eqref{eq:J} is discretized using the boundary grid points and trapezoidal rule. 
$$
    \int^{2T-t_l}_{t_l} f(\tau,\cdot) \,d\tau\approx\sum_{k=l}^{L-l-1}\frac{f(t_k,\cdot)+f(t_{k+1},\cdot)}{2}\Delta t.
$$

With the arrangement of the boundary grid points in the lexicographical order, the boundary vector consists of $L+1$ small vectors of length $4I$, where each small vector corresponds to the spatial boundary points at the moment $t=t_l, l=0,\dots,L$. According to the trapezoidal rule, we obtain the matrix representation $[J] \in \mathbb{R}^{2I(K+1)\times 4I(K+1)}$. It can be partitioned into $\lceil\frac{L+1}{2}\rceil\times (L+1)$ blocks, where $\lceil\frac{L+1}{2}\rceil$ denotes the smallest integer no smaller than $\frac{L+1}{2}$, see Figure~\ref{fig:structure_operators}.
Each block is a $4I\times4I$ identity matrix $[I]$ multiplied by the coefficients of the trapezoidal integration formula.
Specifically, if $L$ is odd,
$$
[J]=\frac{\Delta t}{2}
\begin{pmatrix}
[I]&2[I]&2[I]&\dots&\dots&\dots&\dots&2[I]&2[I]&[I]\\
&[I]&2[I]&\dots&\dots&\dots&\dots&2[I]&[I]&\\
&&\ddots&\ddots&&&\iddots&\iddots&&\\
&&&[I]&2[I]&2[I]&[I]&&&\\
&&&&[I]&[I]&&&&
\end{pmatrix},
$$
If $L$ is even,
$$
[J]=\frac{\Delta t}{2}
\begin{pmatrix}
[I]&2[I]&2[I]&\dots&\dots&\dots&2[I]&2[I]&[I]\\
&[I]&2[I]&\dots&\dots&\dots&2[I]&[I]&\\
&&\ddots&\ddots&&\iddots&\iddots&&\\
&&&[I]&2[I]&[I]&&&\\
&&&&[O]&&&&
\end{pmatrix},
$$
where $[O]$ is the $4I\times4I$ zero matrix.

Likewise, the time-reversal operator $R$ defined in~\eqref{eq:R} and the restriction operator $P_T$ are discretized to obtain their discrete counterparts $[R] \in \mathbb{R}^{2I(L+1)\times 2I(L+1)}$ and $[P_T] \in \mathbb{R}^{2I(L+1)\times 4I(L+1)}$. Thanks to the lexicographical order of the boundary grid points, these matrices have block structures as well:
$[R]$ is a square anti-diagonal block matrix
where the blocks are $4I\times4I$ identity matrices, and $[P_T]$ is a rectangular matrix with $1$ on the main diagonal:
$$
[P_T]=
\begin{pmatrix}
[I]_{[4I\lceil\frac{L+1}{2}\rceil]\times[4I\lceil\frac{L+1}{2}\rceil]}&[O]
\end{pmatrix},
[R]=
\begin{pmatrix}
&&[I]\\
&\iddots&\\
[I]&&
\end{pmatrix}
$$

\begin{figure}[!htb]
    \centering
    \includegraphics[width=0.49\textwidth]{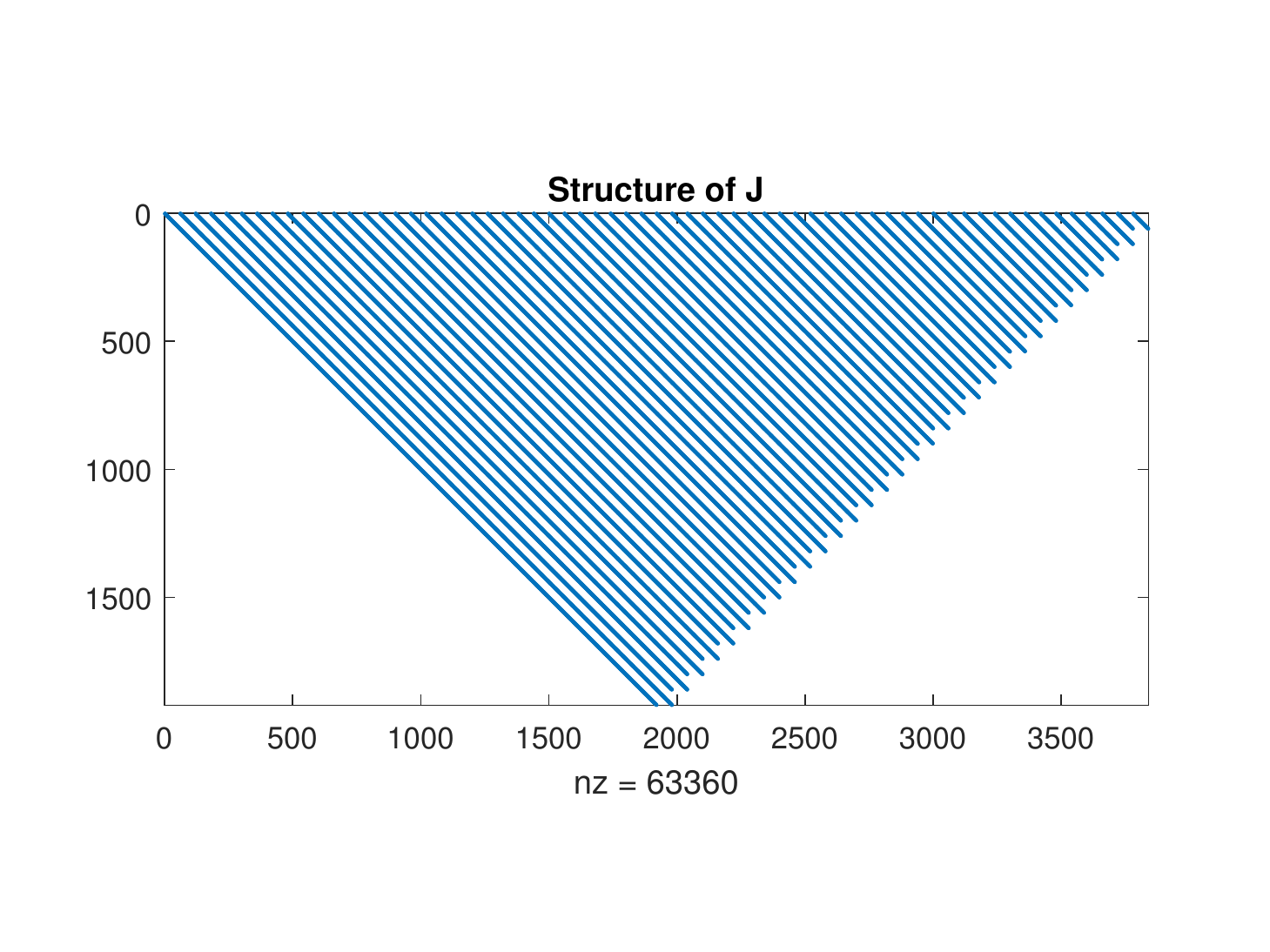}
    \includegraphics[width=0.49\textwidth]{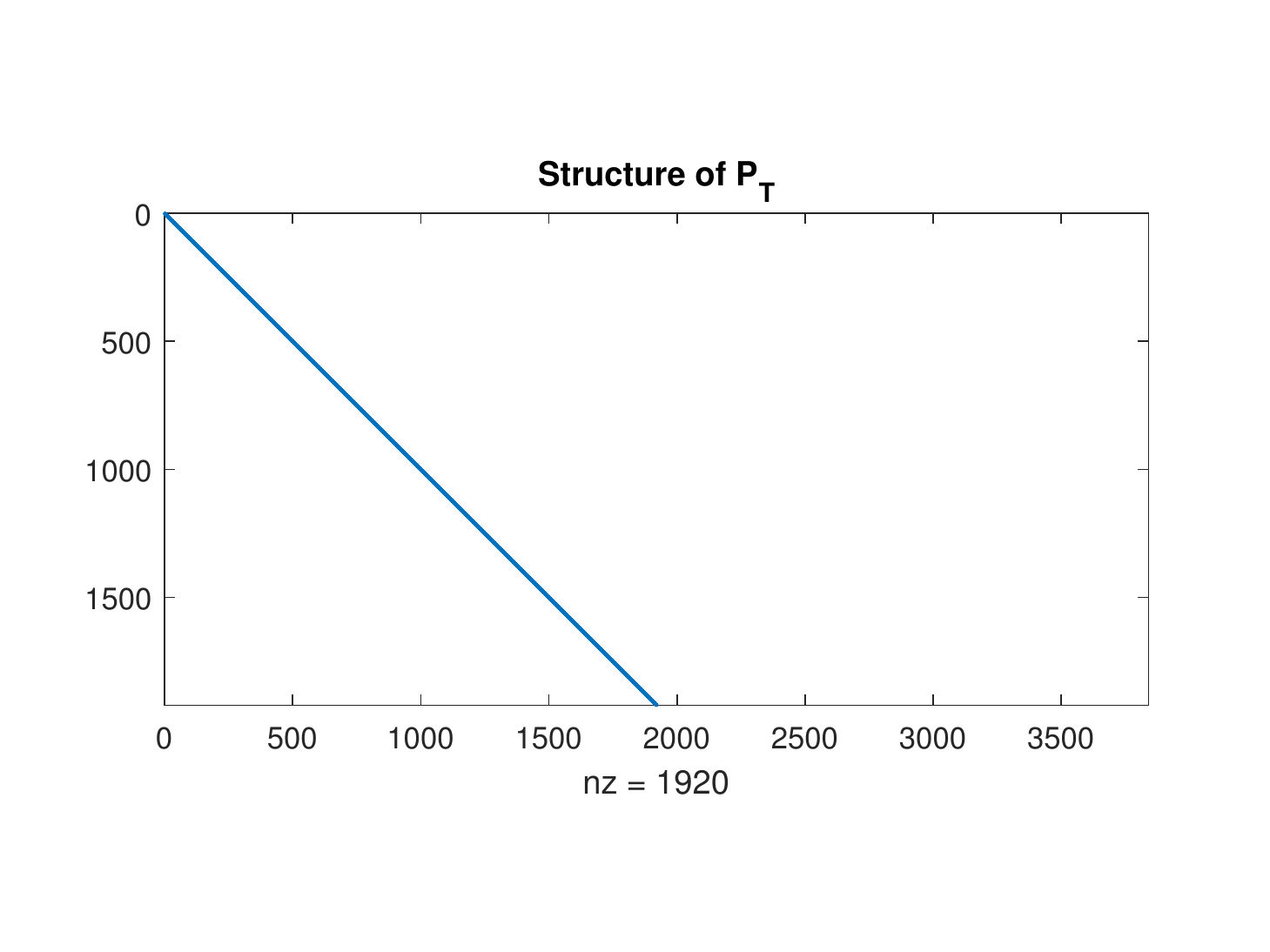}

    \includegraphics[width=0.49\textwidth]{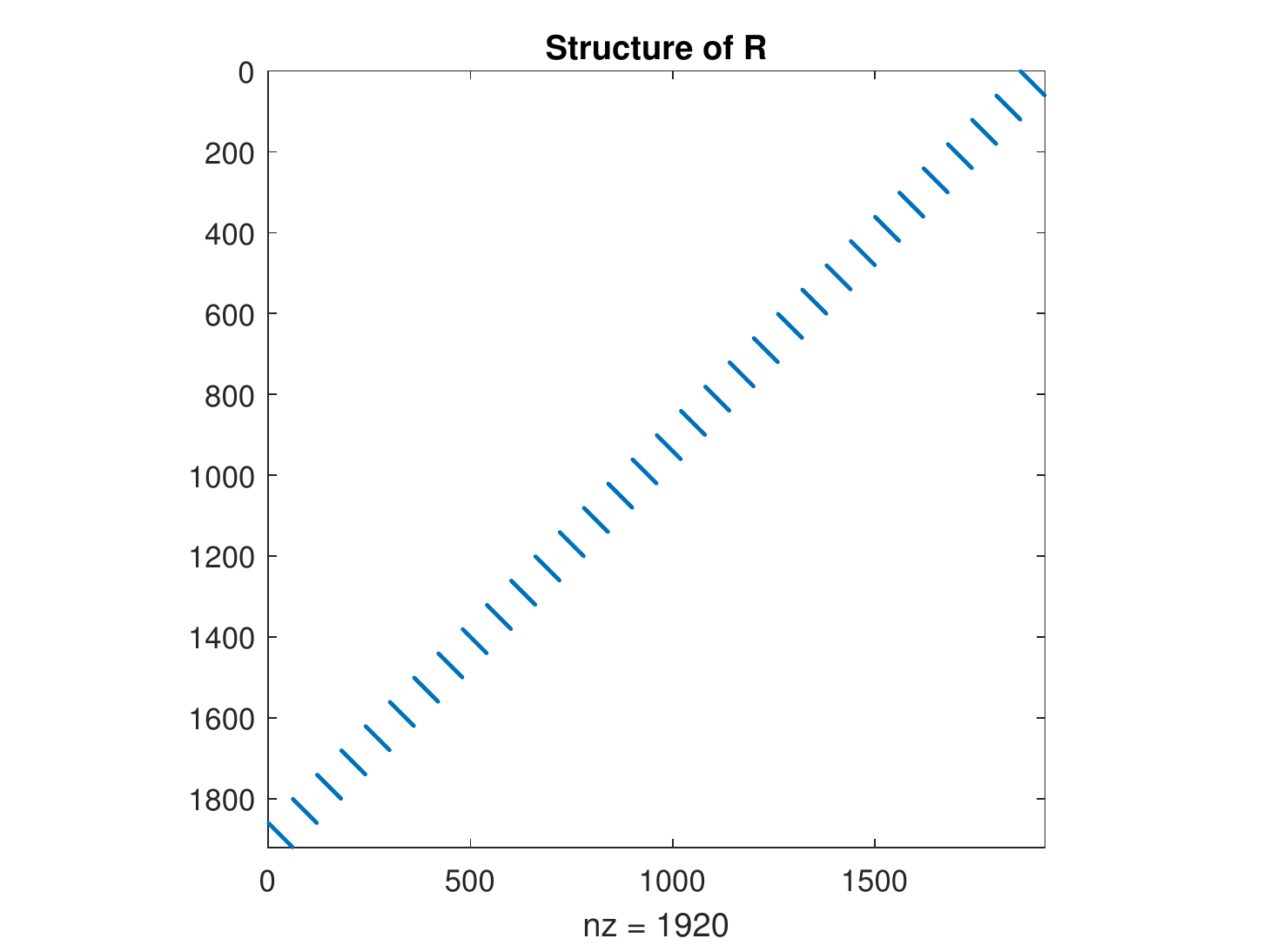}
    \includegraphics[width=0.49\textwidth]{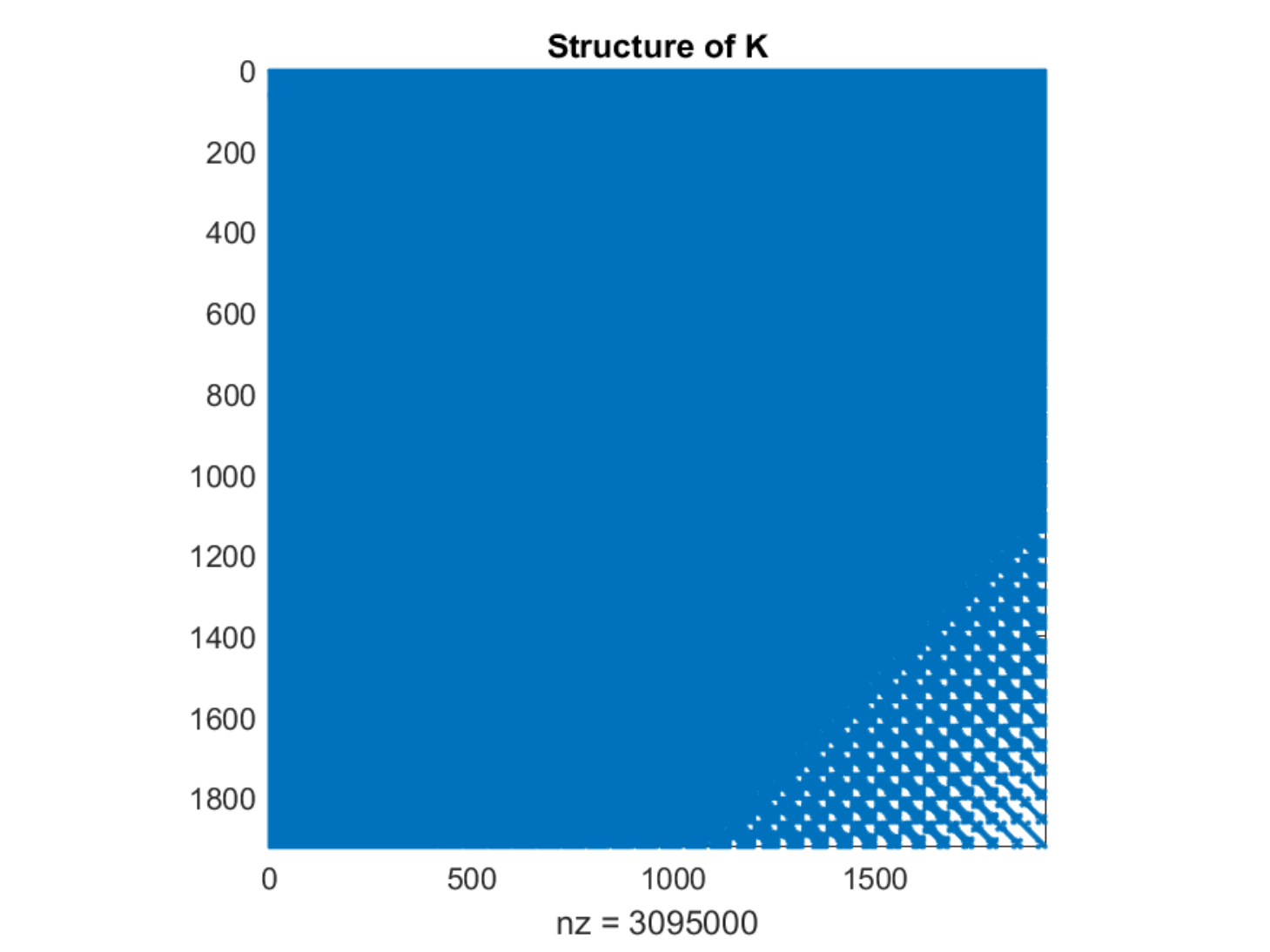}
    \caption{The structure of $[J],[P_T],[R],[K]$ with $I=15$ and $L=63$. nz is the number of nonzero elements in the matrix.}
    \label{fig:structure_operators}
\end{figure}

 \begin{figure}[!htb]
     \centering
     \includegraphics[width=0.49\textwidth]{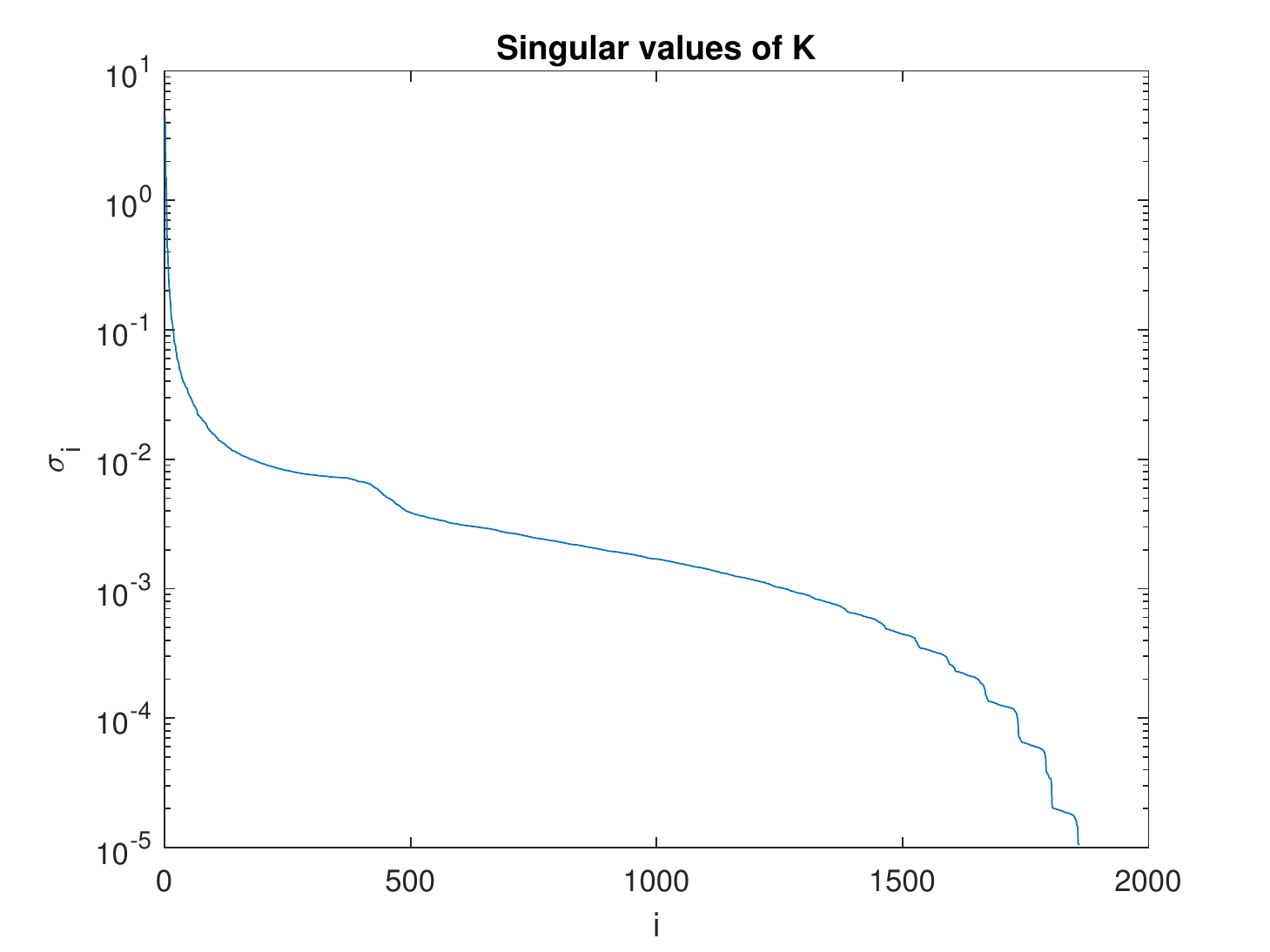}
     \caption{The singular values of $[K]$ with $I=15,L=63$. $[K]$ has 60 zero singular values.}
     \label{fig:singular_K}
 \end{figure}
The discretized adjoint $[P^\ast_T]$ is taken to be $[P_T]^t$, the transpose of $[P_T]$. Finally, the discretized $K$ is the following matrix product, according to~\eqref{eq:K}:
$$
[K] = [J] [\Lambda_c] [P_T]^t - [R] [\Lambda_{c,T}] [R] [J] [P_T]^t \quad\quad \in \mathbb{R}^{2I(L+1)\times 2I(L+1)}.
$$
In general, $[K]$ is not a sparse matrix.
An image of $[K]$ is illustrated in Figure~\ref{fig:structure_operators}. Since $K$ is a compact operator (see the remark below~\eqref{eq:K}), $[K]$ is ill-conditioned. Its singular values are plotted in Figure~\ref{fig:singular_K}.

\textbf{Discretization of the Operator $B$.}
With the aforementioned discretized operators, the discretized $B$ is naturally the following matrix product, according to~\eqref{eq:B}:
$$
[B] = [J] [\mathcal{T}_D] - [R] [\Lambda_{c,T}] [R] [J] [\mathcal{T}_N]  \quad\quad \in \mathbb{R}^{2I(L+1)\times 4I(L+1)}.
$$
Here the matrices $[\mathcal{T}_D], [\mathcal{T}_N] \in\mathbb{R}^{4I(L+1)\times 4I(L+1)}$ are of large size, thus their storage takes up lots of memory. However, observing that the operator $B$ is applied only to harmonic functions which are time-independent, we can reduce the cost of memory by first computing these matrices at a specific time, then shifting them to other times. Since the harmonic functions in our numerical experiments are all handcrafted, we can also compute their boundary values from the analytic expressions.

\textbf{Solving for $f_\alpha$.}
The next step is to solve for $[f_\alpha]$ from the discretized version of~\eqref{eq:normal}:
\begin{equation} \label{eq:discretenormal}
([K] + \alpha) [f_\alpha] = [B] [\psi|_{\partial\Omega}].
\end{equation}
Here $[f_\alpha]$ is the discretized version of $f_\alpha$ in~\eqref{eq:normal}; 
$\psi$ is an arbitrary harmonic function and $[\psi|_{\partial\Omega}] \in\mathbb{R}^{4I(L+1)\times 1}$ denotes the vectorized boundary restriction $\psi|_{\partial\Omega}$. Both $[f_\alpha]$ and $\psi|_{\partial\Omega}$ are in the lexicographical order as before.
Since $[K]$ has zero singular values, 
we solve~\eqref{eq:normal} with Tikhonov regularization. Specifically, the equation that we solve is
\begin{equation} \label{eq:regdiscretenormal}
([K]^t [K] + \alpha) [f_\alpha] = [K]^t [B] [\psi|_{\partial\Omega}]
\end{equation}
where $[K]^t$ is the transpose of $[K]$. 




\textbf{Solving for $[c^{-2}]$.}
The last step is to solve for $[c^{-2}]$. 
In the proof of Algorithm~\ref{alg:main}, this is accomplished by constructing appropriate complex exponential harmonic functions~\eqref{eq:phipsi} and inverting the Fourier transform. Nonetheless, such harmonic functions are not suitable for numerical implementation: they tend to blow up due to their exponential growth in certain directions. We instead exploit harmonic functions of the following form~\cite{martin2008polyhedral}
\begin{equation} \label{eq:harmonics}
    \sum^n_{j=1} a_j \Phi(|x-x^{(j)}|)
\end{equation}
Here $a_j$ are real scalars and 
$\Phi$ is (up to a constant factor) the fundamental solution of the Laplace operator: $\Phi(r) = \log r$ for $n=2$ and $\Phi(r) = \frac{1}{r}$ for $n\geq 3$. These functions are harmonic except at the singularities $x^{(j)}$.

We proceed to discretize the identity~\eqref{eq:limit}. On the right-hand side of~\eqref{eq:limit}, we fix a small $\alpha > 0$ and approximate the boundary integral over $[0,T]\times\partial\Omega$ using the trapezoidal rule:
\begin{equation} \label{eq:discretelimitleft}
 (f_\alpha, B\phi)_{L^2((0,T)\times\partial\Omega)} \approx \sum^{4I(L+1)}_{j=1} w_j [f_\alpha]_j [B\phi|_{\partial\Omega}]_j
\end{equation}
where $[f_\alpha]$ has been obtained from the previous step, and $[B\phi|_{\partial\Omega}]$ is computed from the matrix multiplication $[B\phi|_{\partial\Omega}] = [B] [\phi|_{\partial\Omega}]$. On the left-hand side of~\eqref{eq:limit}, we approximate the interior integral over $\Omega$ by successively applying the trapezoidal rule first to $y$ and then to $x$. If we write $w = (\frac{1}{2}, 1, \dots, 1, \frac{1}{2})\in \mathbb{R}^{I+1}$ for the coefficient vector of the trapezoidal rule, 
then
\begin{align}
(\psi,\phi)_{L^2(\Omega,c^{-2}dx)} & = \int^{1}_{-1} \int^{1}_{-1} \psi(x,y) \phi(x,y) c^{-2}(x,y) \, dx dy \nonumber \\
 & \approx 
\sum^I_{j,k=0} w_j w_k \psi(x_j,y_k) \phi(x_j,y_k) c^{-2}(x_j,y_k) ( \Delta x )^2. \label{eq:discretelimitright}.
\end{align}



Finally, we equating~\eqref{eq:discretelimitleft} and~\eqref{eq:discretelimitright} and inserting various harmonic functions of the form~\eqref{eq:harmonics}. This gives rise to a system of linear equations on the unknowns $c^{-2}(x_j,y_k)$, $j,k=0,1,\dots,I$. If the number of harmonic functions is small, the linear system will be under-determined. In this circumstance, we employ Tikhonov regularization to solve for the regularized unknowns.

\section{Numerical Experiments} \label{sec:examples}

We validate the reconstruction algorithm in this section with several numerical examples. All the numerical experiments are conducted on a Windows 10 laptop with Intel Core i7-9750H 2.6GHz CPU and 16GB RAM.

For the forward simulation, we employ a computational grid of size $(2L+1)\times101\times101$ to generate the ND map. For the inverse problem, we re-sample the ND map on a coarser grid of size $(L+1)\times51\times51$ and implement Algorithm~\ref{alg:main} there to avoid the inverse crime. Here the value of $L$ depends on the choice of $c$.

We construct the following harmonic functions in view of~\eqref{eq:harmonics}:
\begin{align*}
    \phi^{(1)} = \ln((x-2.3)^2+(y-2.2)^2),
& \quad\quad \phi^{(2)} = \ln((x+2.5)^2+(y-2.1)^2),\\
\phi^{(3)} = \ln((x-2.7)^2+(y+1.9)^2),
& \quad\quad \phi^{(4)} = \ln((x+1.5)^2+(y+2.5)^2),\\
\phi^{(5)} = \ln((x+1.2)^2+(y+2.5)^2), & \quad\quad \phi^{(6)} = 1.
\end{align*}
We denote the vector space generated by the products of these harmonic functions by $S_6$, that is,
$$
S_6 := \vspan\{\phi^{(i)}\phi^{(j)}: i,j = 1, \dots, 6\}.
$$








\bigskip \bigskip
\textbf{Experiment 1: $c \equiv 1$ and $c^{-2} \in S_6$.}

We test the reconstruction of a constant speed $c\equiv 1$ in this experiment, see Figure~\ref{fig:exp1_c} for the ground-truth speed. 
Notice that $c^{-2}\equiv 1 \in S_6$ since
$\phi^{(6)}=1$.
The reconstructed images along with the errors are illustrated in Figure~\ref{fig:exp1}, in the presence of $0\%$, $5\%$ and $50\%$ of Gaussian random noises with zero mean and unit variance respectively. We observe that the addition of the random noise has almost negligible impact on the reconstructed images. This is because in the definition~\eqref{eq:K} of $K$, the ND map is followed by the low-pass filter $J$, which tends to smoothing out the random noise. We plot the image of $[K]$ before (Figure~\ref{fig:exp1_c}) and after (Figure~\ref{fig:Knoise1}) adding the noise. 
As a justification, we also test the impact of non-random noise on the reconstruction. We re-run the code with constant noise $0.01$, $0.02$, and $0.05$ added to the ND map. In this case, the filter fails to smooth out the noise, and the reconstructions are significantly compromised, see Figure~\ref{fig:Knoise0}

\begin{figure}[!htb]
    \centering
    \includegraphics[width=0.48\textwidth]{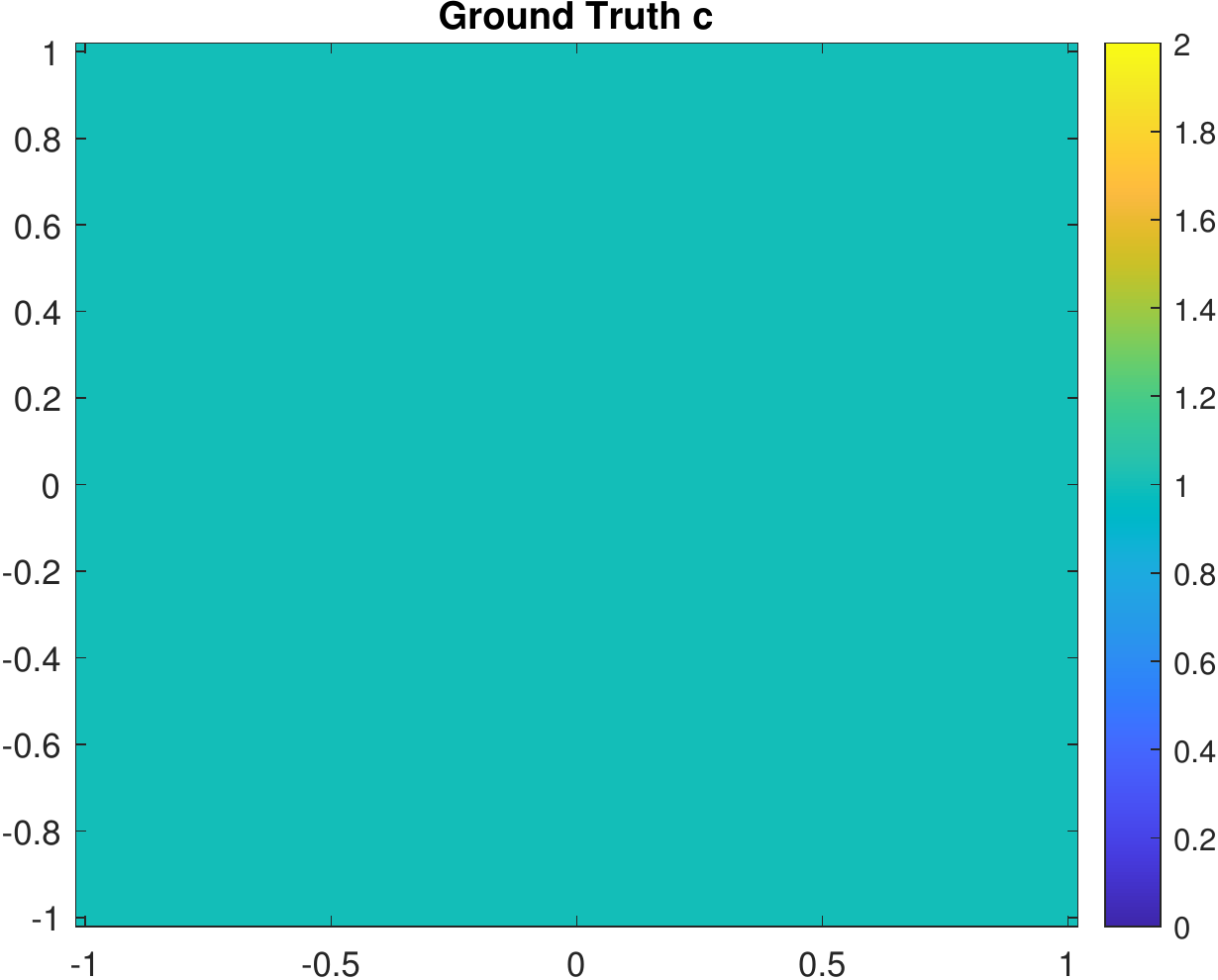}
    \includegraphics[width=0.5\textwidth]{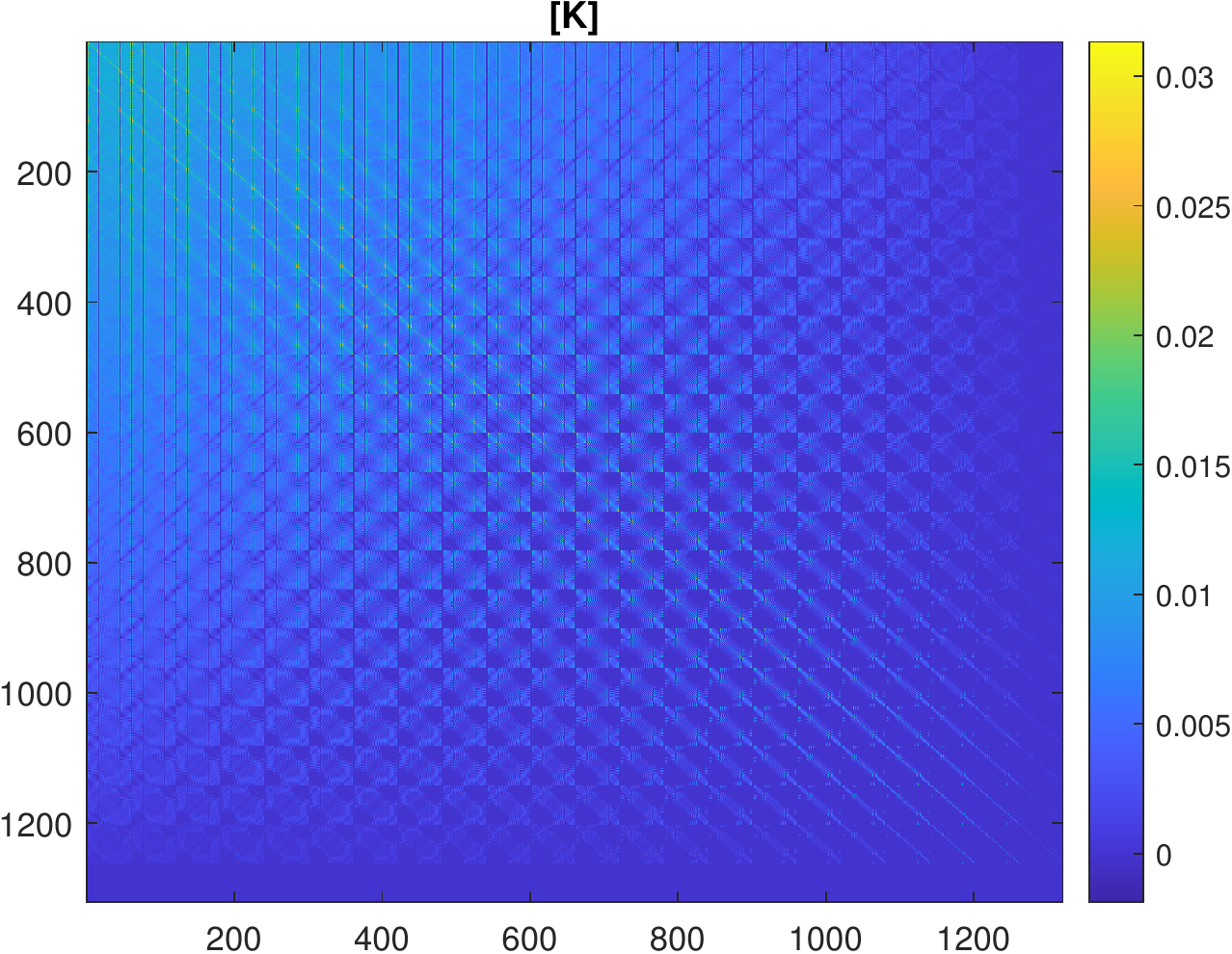}
    \caption{Left: The constant speed $c\equiv 1$. Grid: $283\times51\times51$, $I=50,L=282$. Right: $[K]$. Grid: $43\times16\times16$, $I=15,L=42$.}
    \label{fig:exp1_c}
\end{figure}
\begin{figure}[p]
    \centering
    \includegraphics[width=0.49\textwidth]{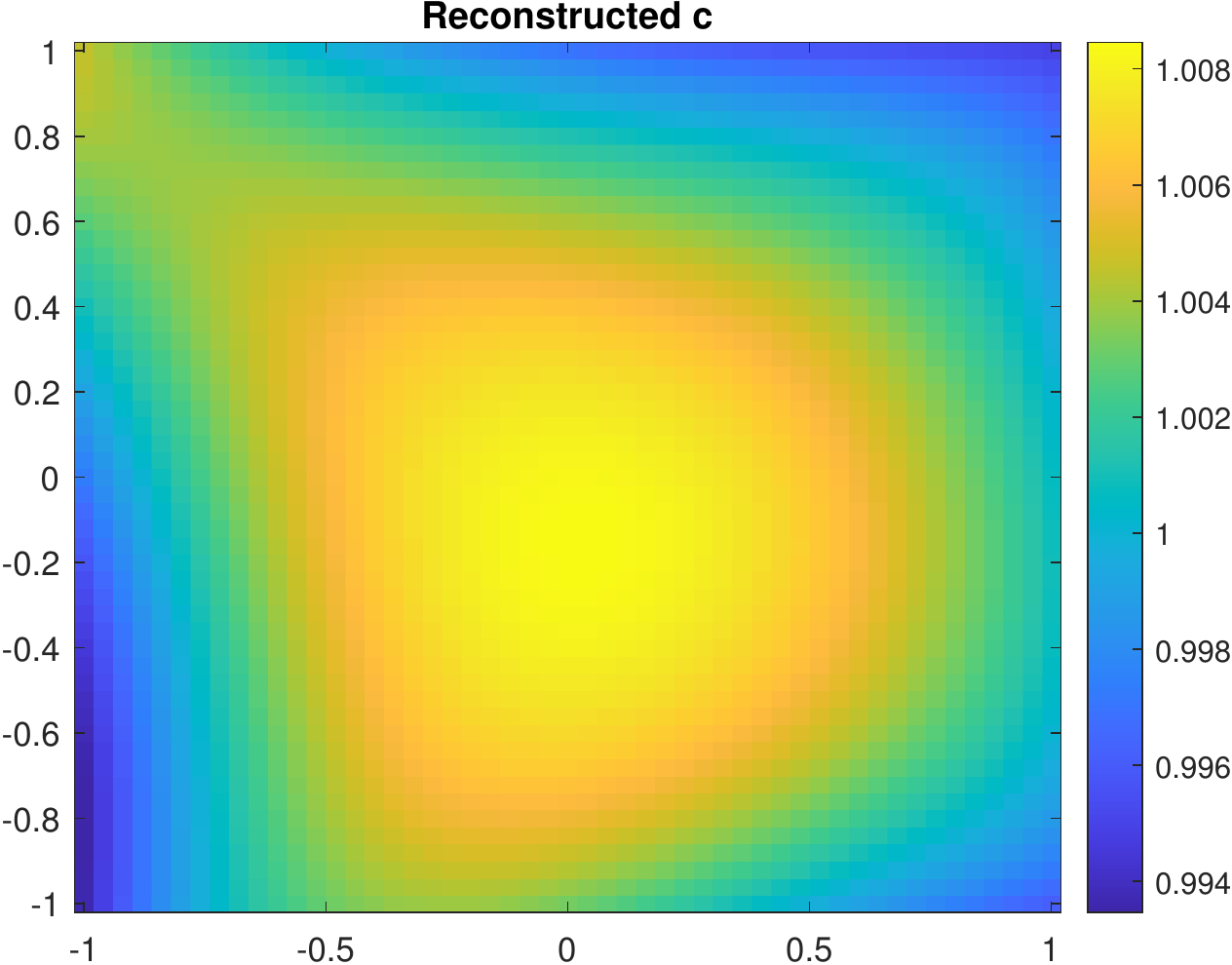}
    \includegraphics[width=0.49\textwidth]{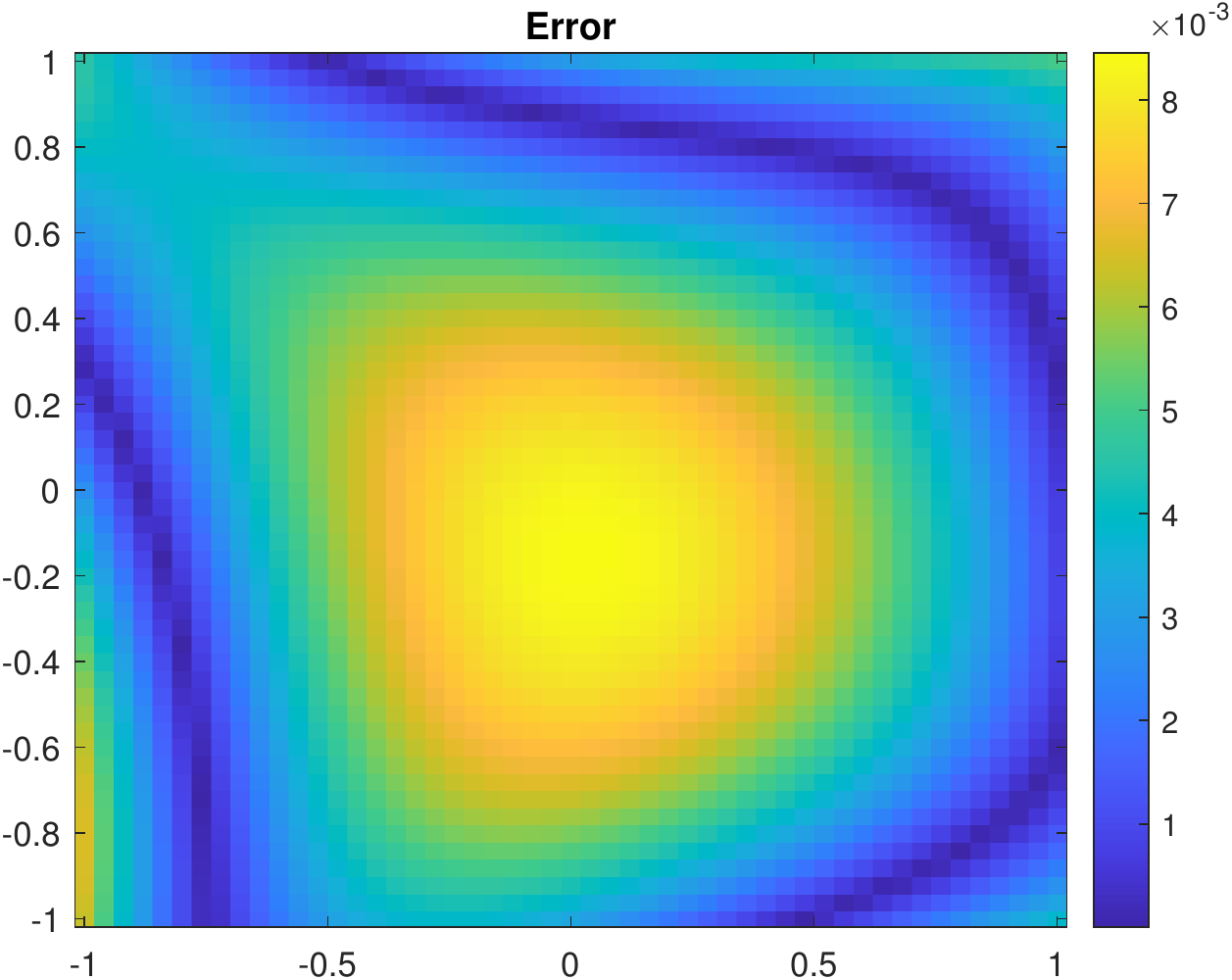}

    \includegraphics[width=0.49\textwidth]{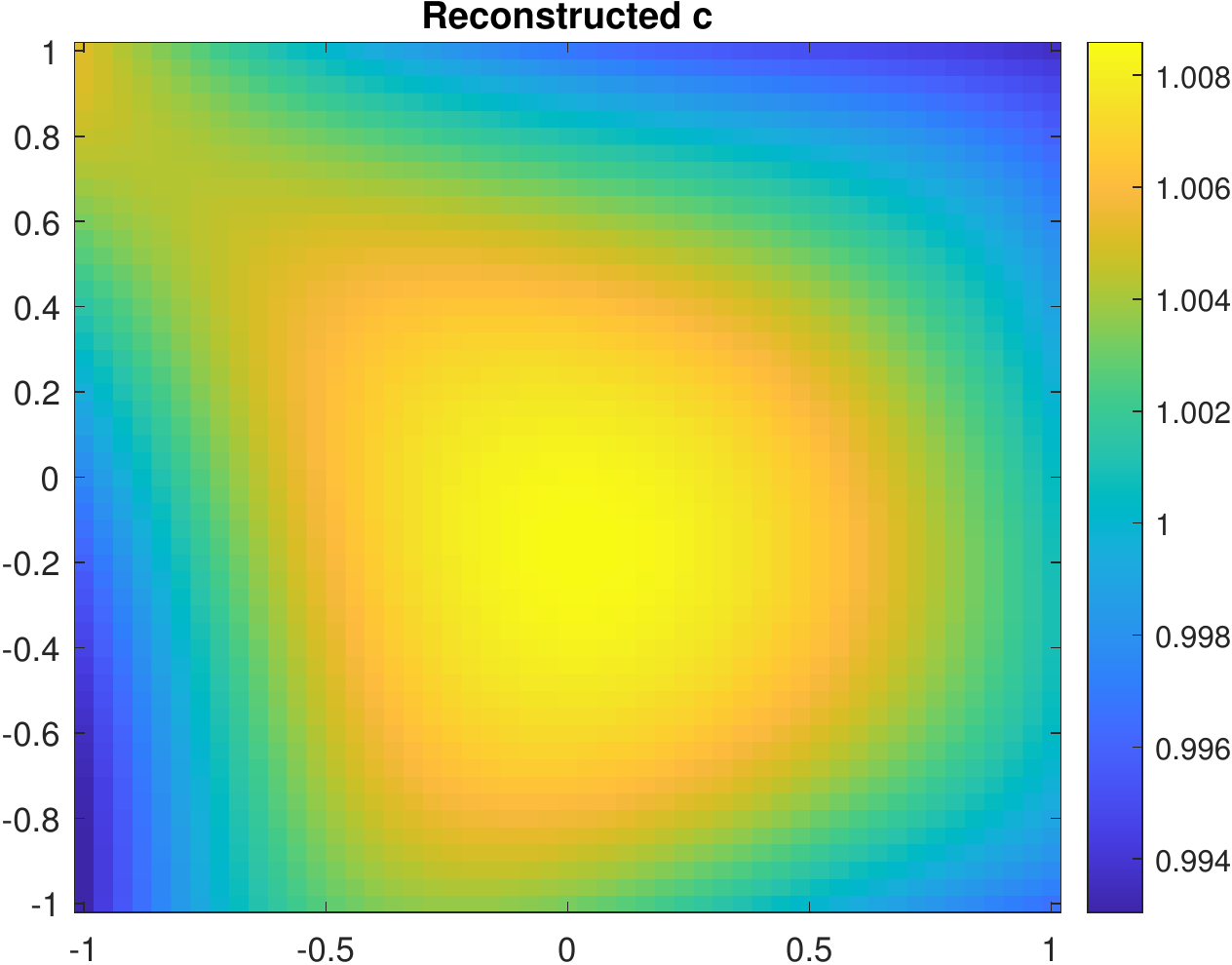}
    \includegraphics[width=0.49\textwidth]{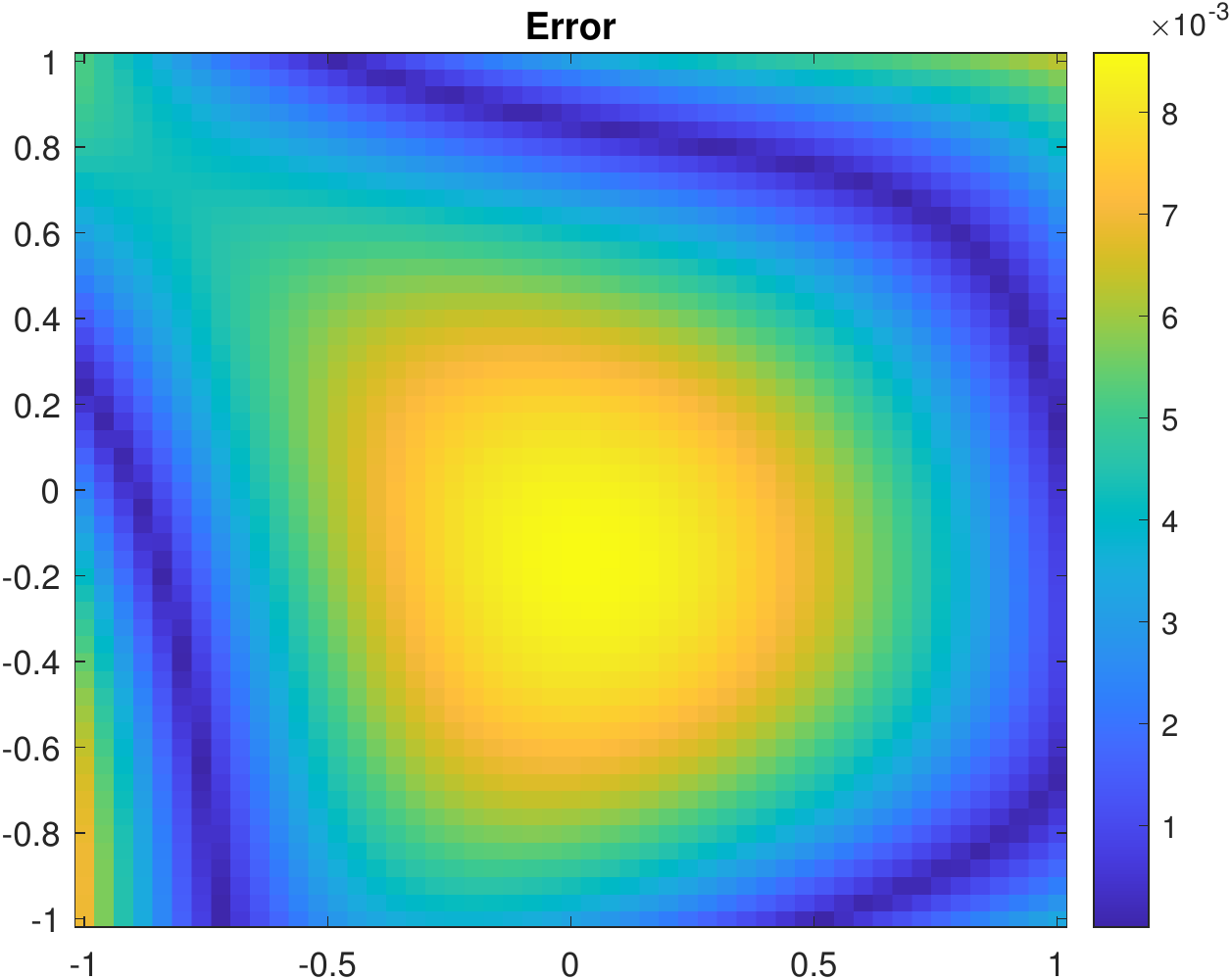}
    
    \includegraphics[width=0.49\textwidth]{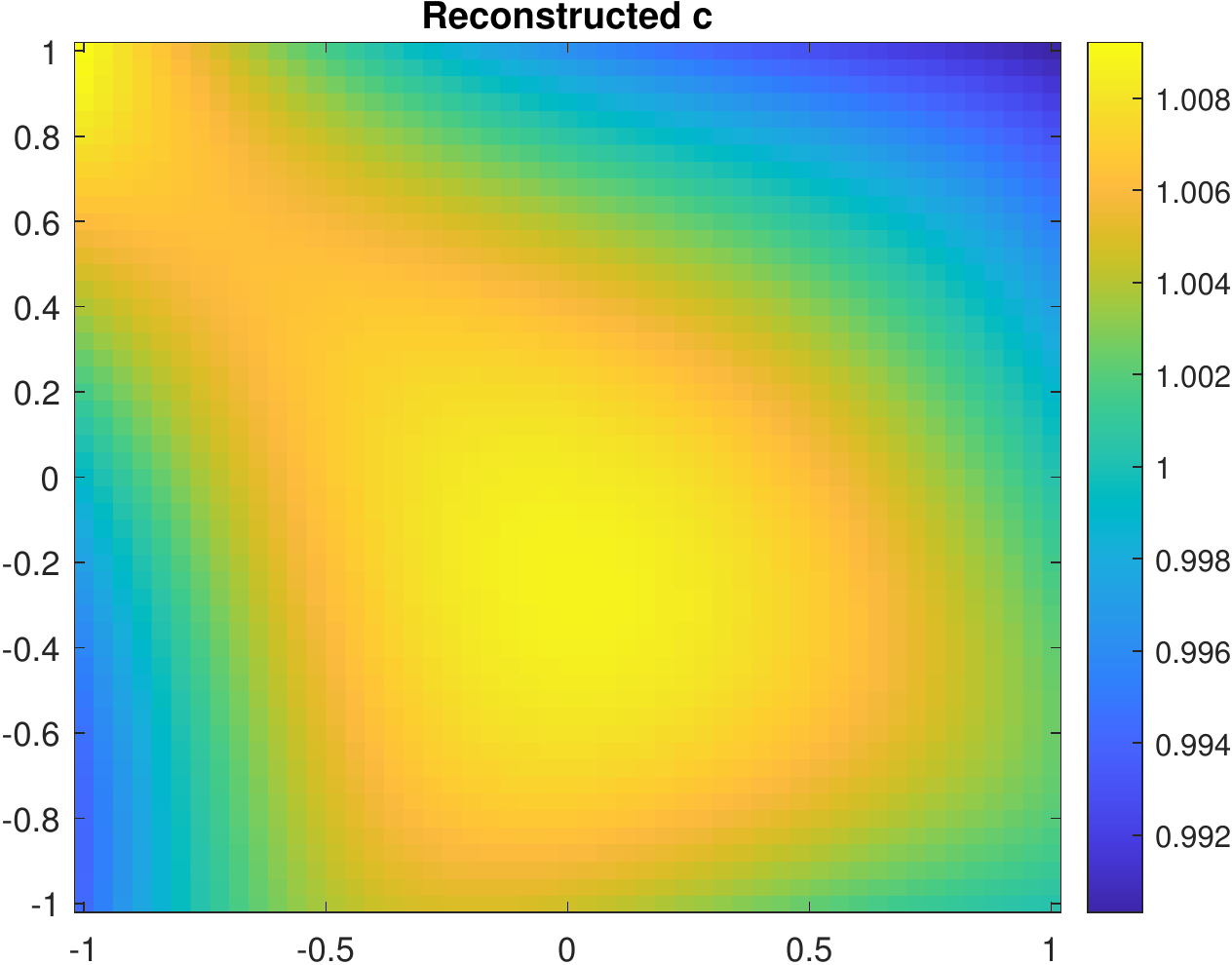}
    \includegraphics[width=0.49\textwidth]{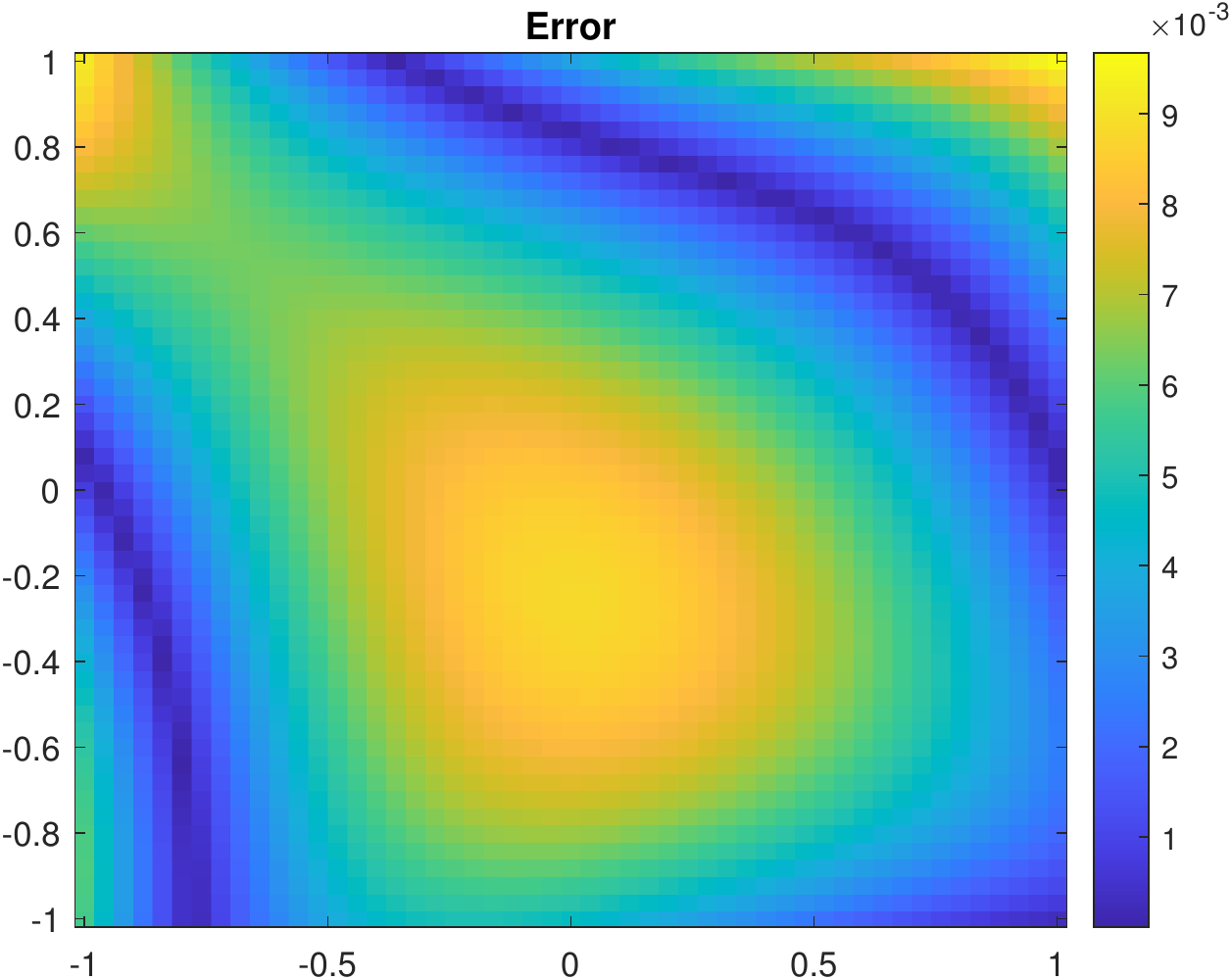}
    \caption{Reconstructions of the constant speed $c=1$.
    Left column: reconstructed $c$. Right column: error between the reconstruction and the ground truth. First row: $0\%$ noise; the relative $L^2$-error is $0.4769\%$.
    Second row: $5\%$ noise; the relative $L^2$-error is $0.4873\%$.
    Third row: $50\%$ noise; the relative $L^2$-error is $0.5454\%$.
    Grid: $283\times51\times51$, $I=50,L=282$.
    }
    \label{fig:exp1}
\end{figure}

\begin{figure}[p]
    \centering
    \includegraphics[width=0.49\textwidth]{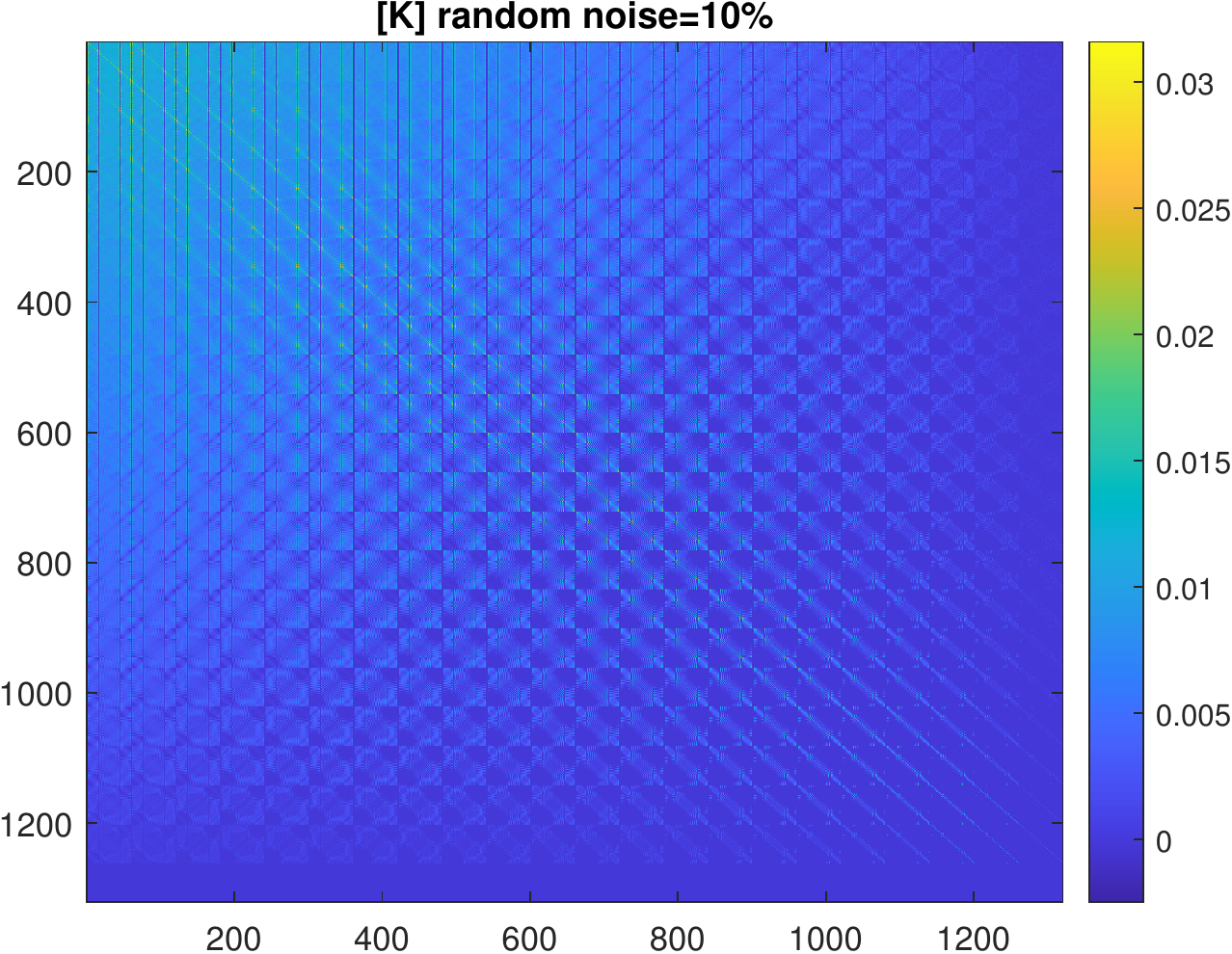}
    \includegraphics[width=0.49\textwidth]{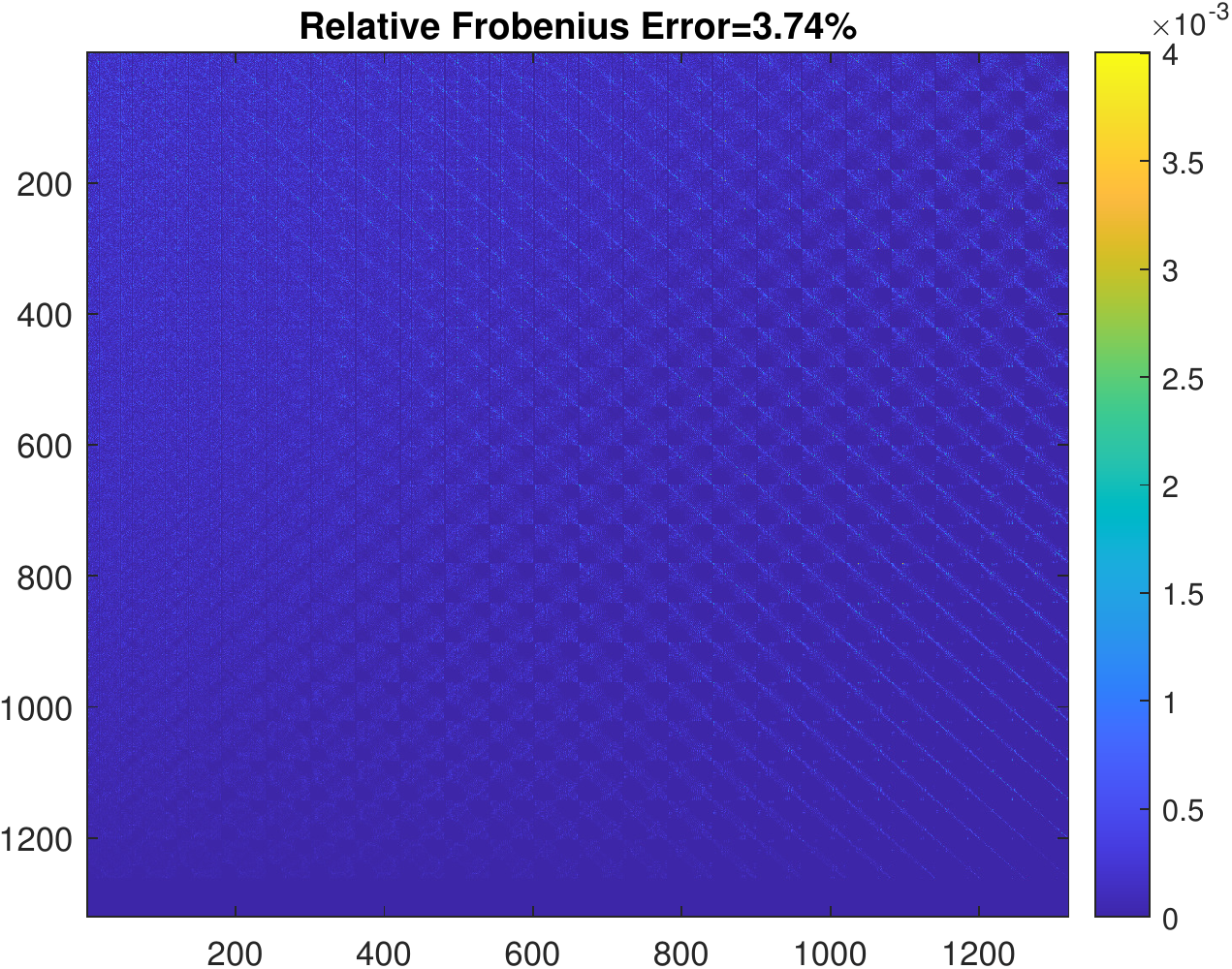}
    
    \includegraphics[width=0.49\textwidth]{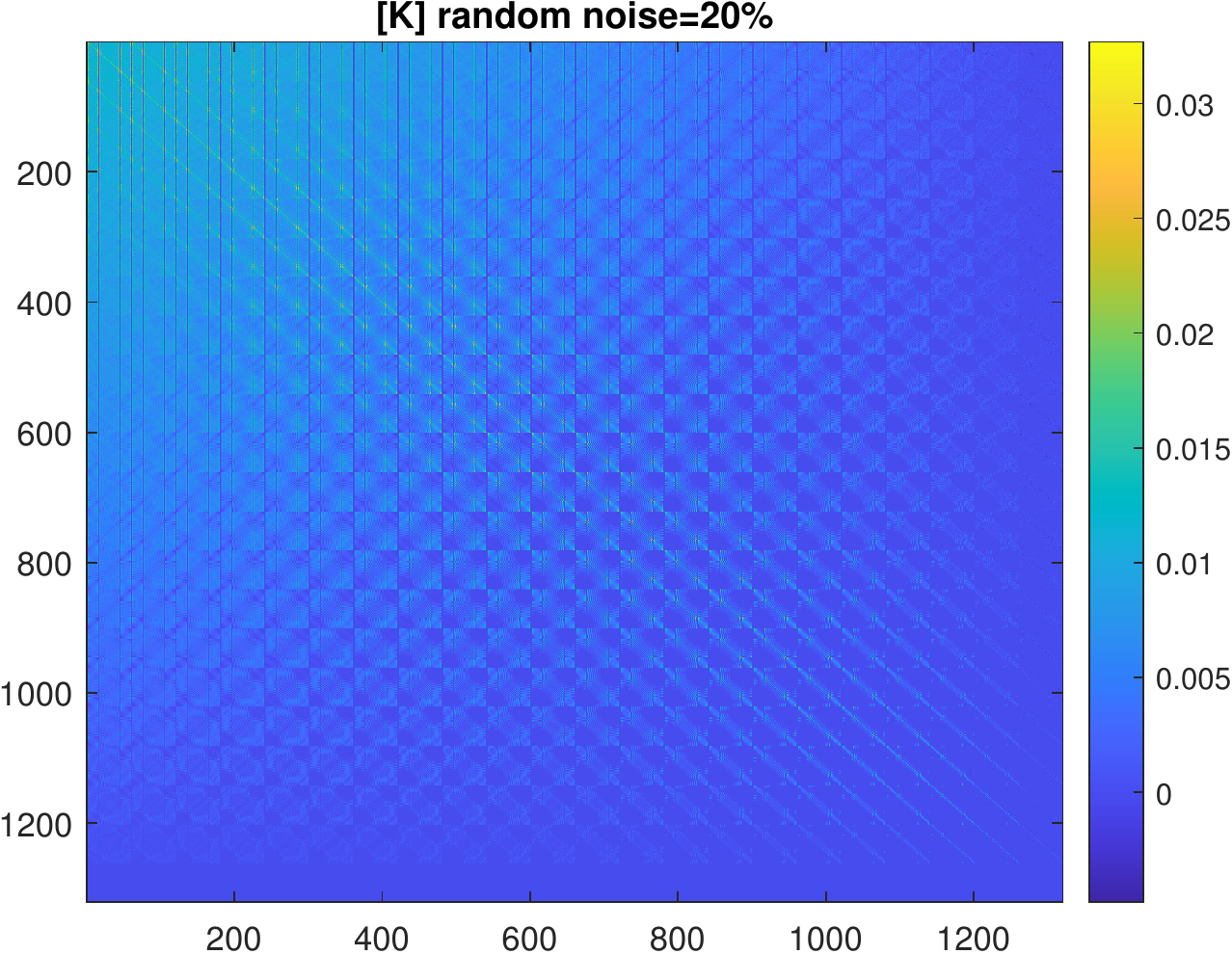}
    \includegraphics[width=0.49\textwidth]{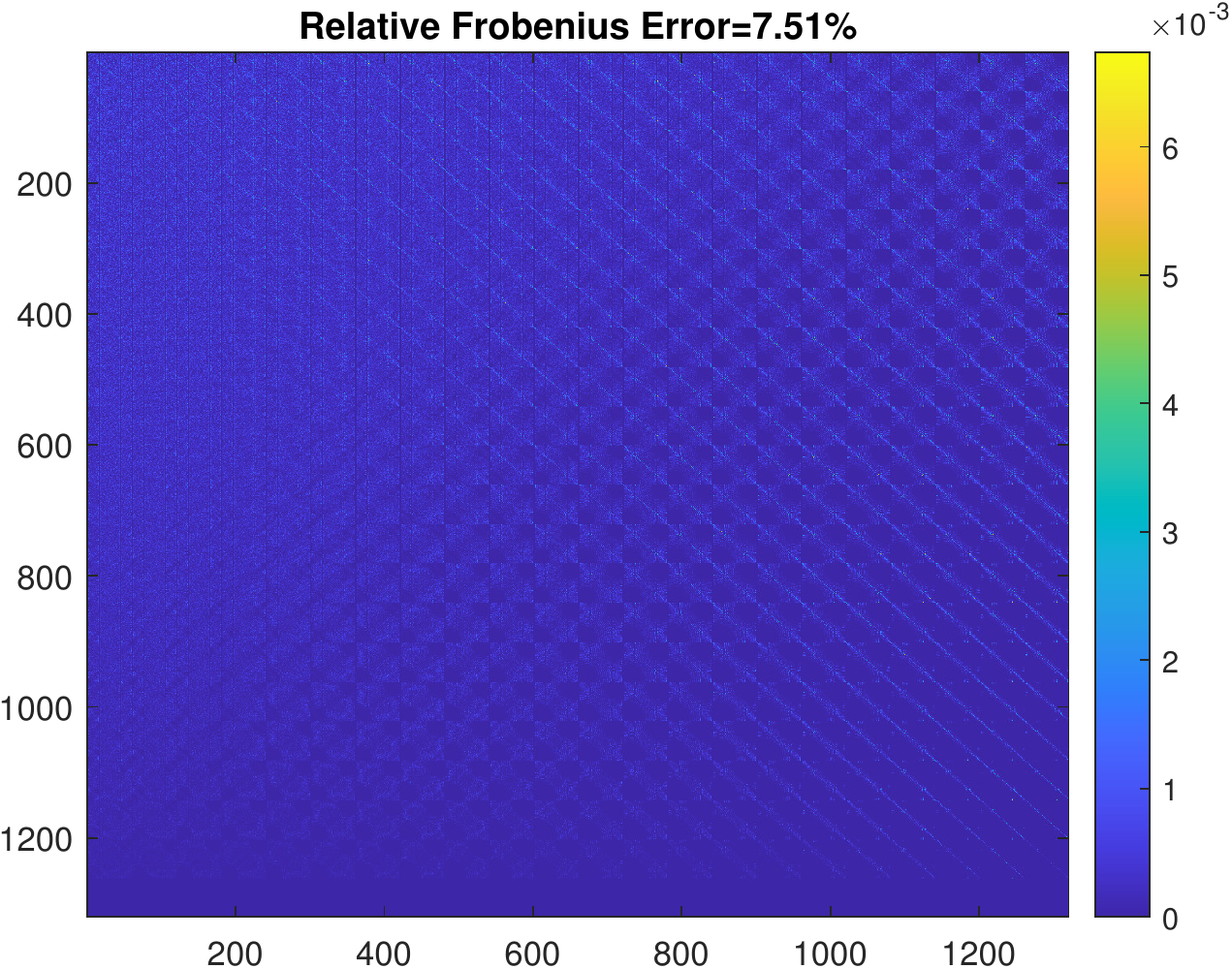}
    
    \includegraphics[width=0.49\textwidth]{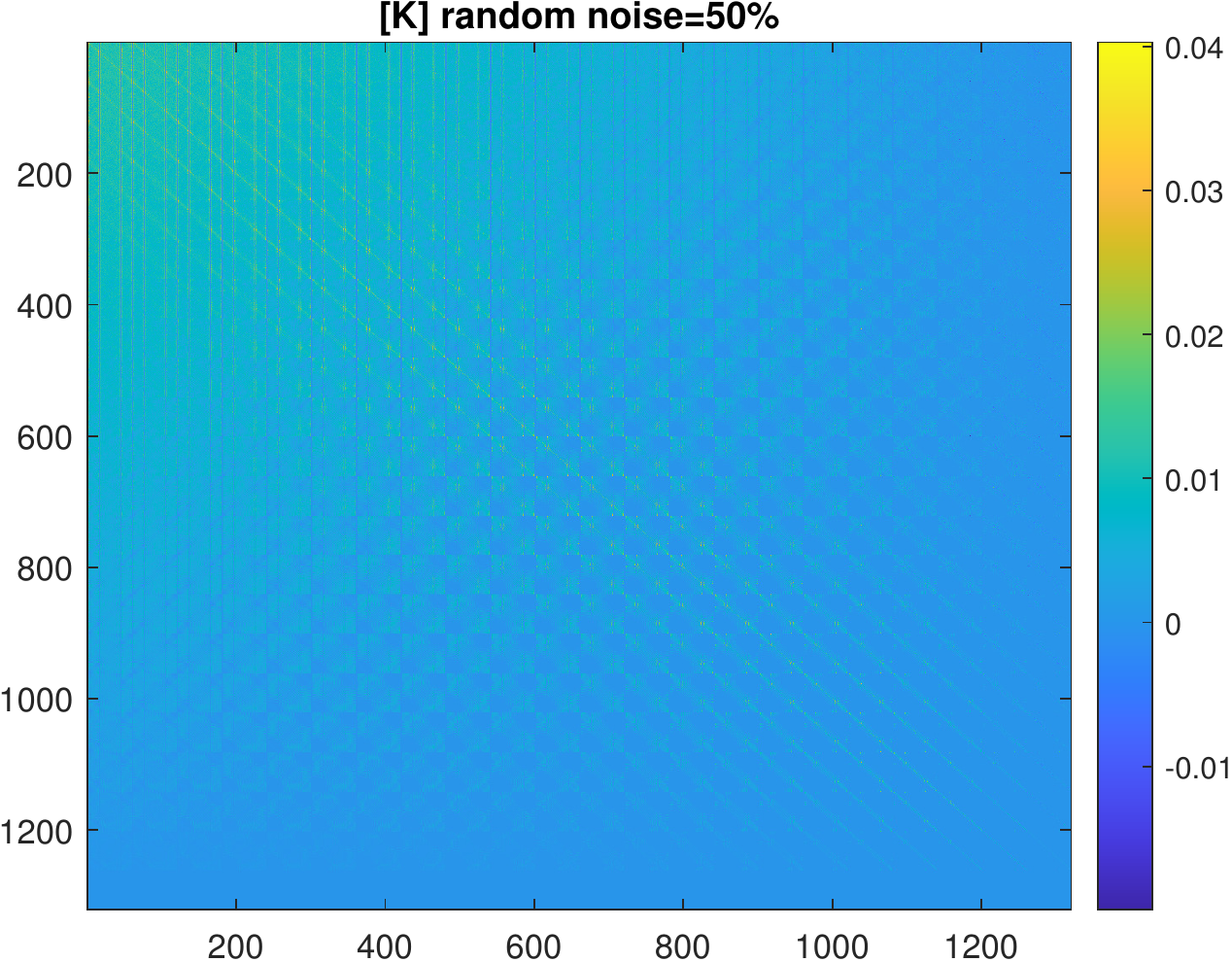}
    \includegraphics[width=0.49\textwidth]{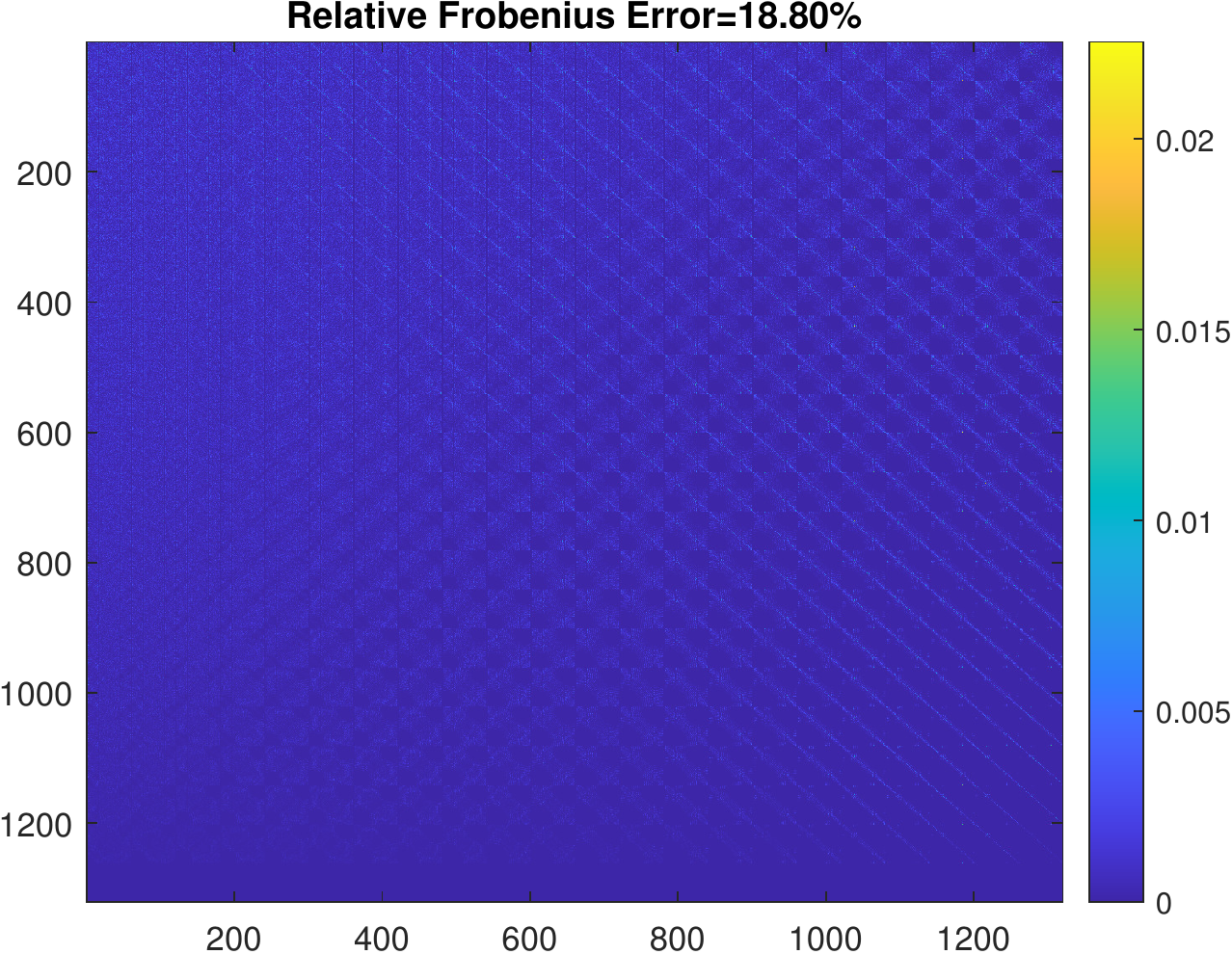}
    \caption{
    Left: $[K]$ with random noise.
    Right: absolute difference between $[K]$ with random noise and $[K]$ with no noise.
    First row: $10\%$ of random noise.
    Second row: $20\%$ of random noise.
    Third row: $50\%$ of random noise.
    Grid: $43\times16\times16$, $I=15,L=42$.
}
    \label{fig:Knoise1}
\end{figure}

\begin{figure}[p]
    \centering
    \includegraphics[width=0.49\textwidth]{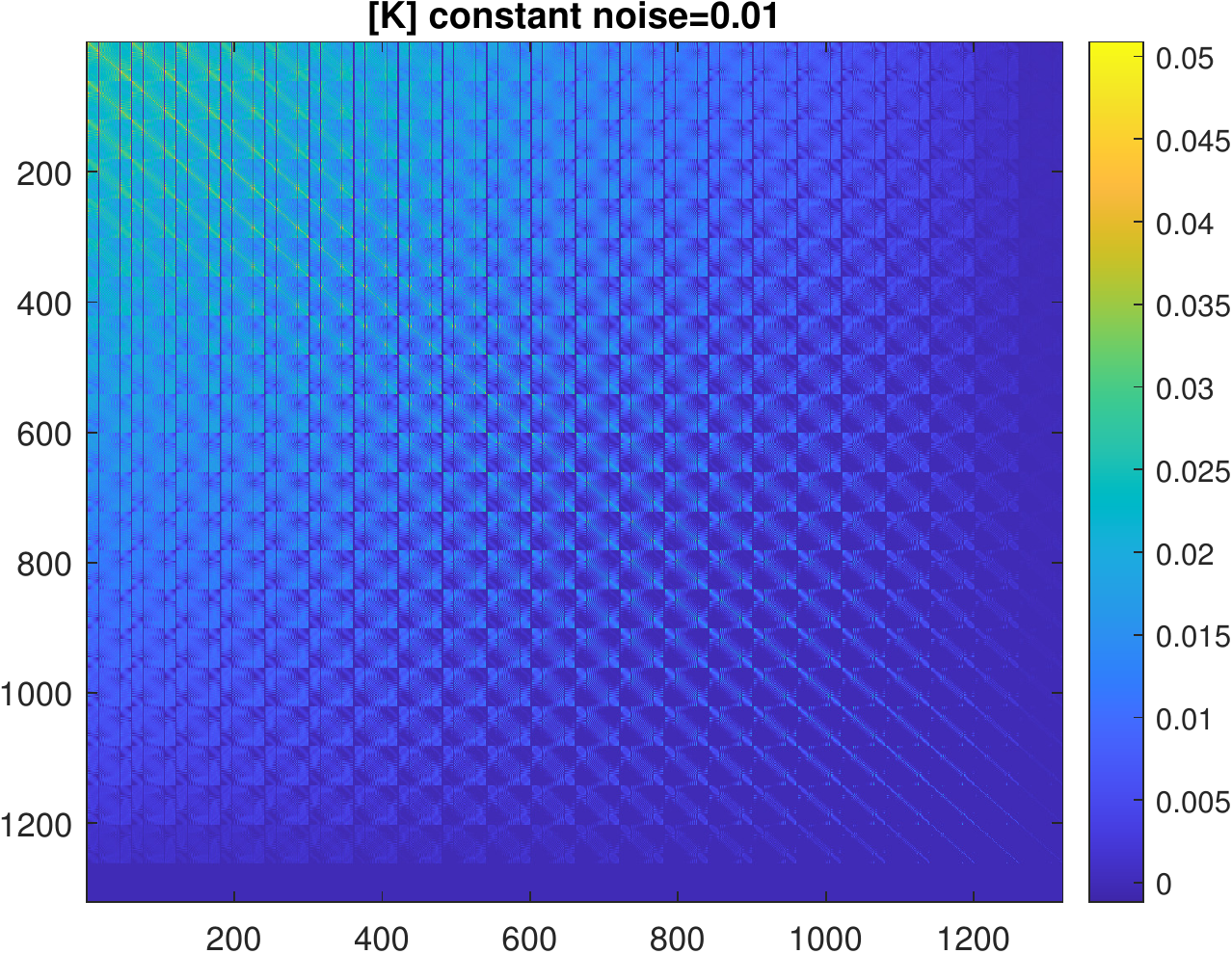}
    \includegraphics[width=0.49\textwidth]{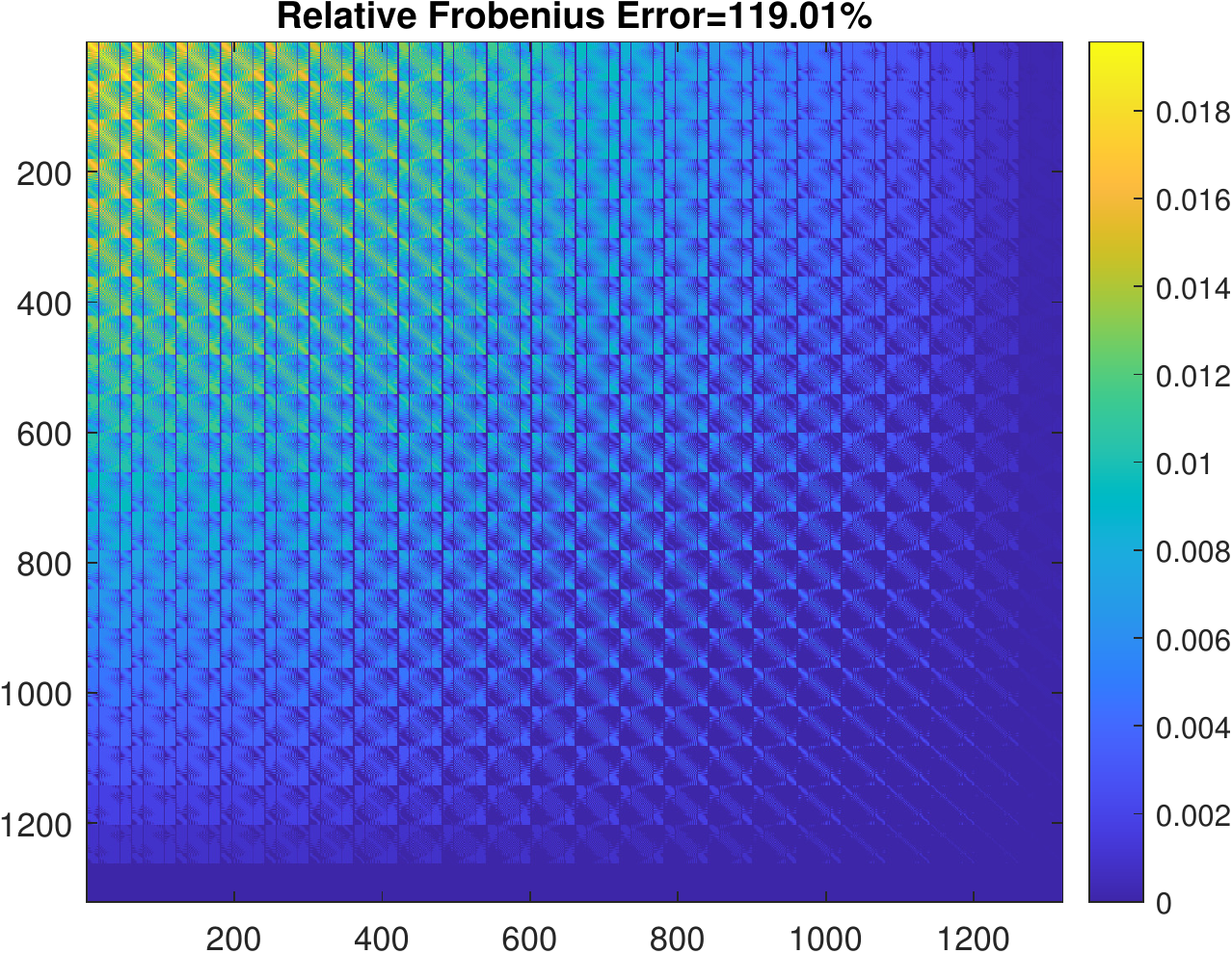}
    
    \includegraphics[width=0.49\textwidth]{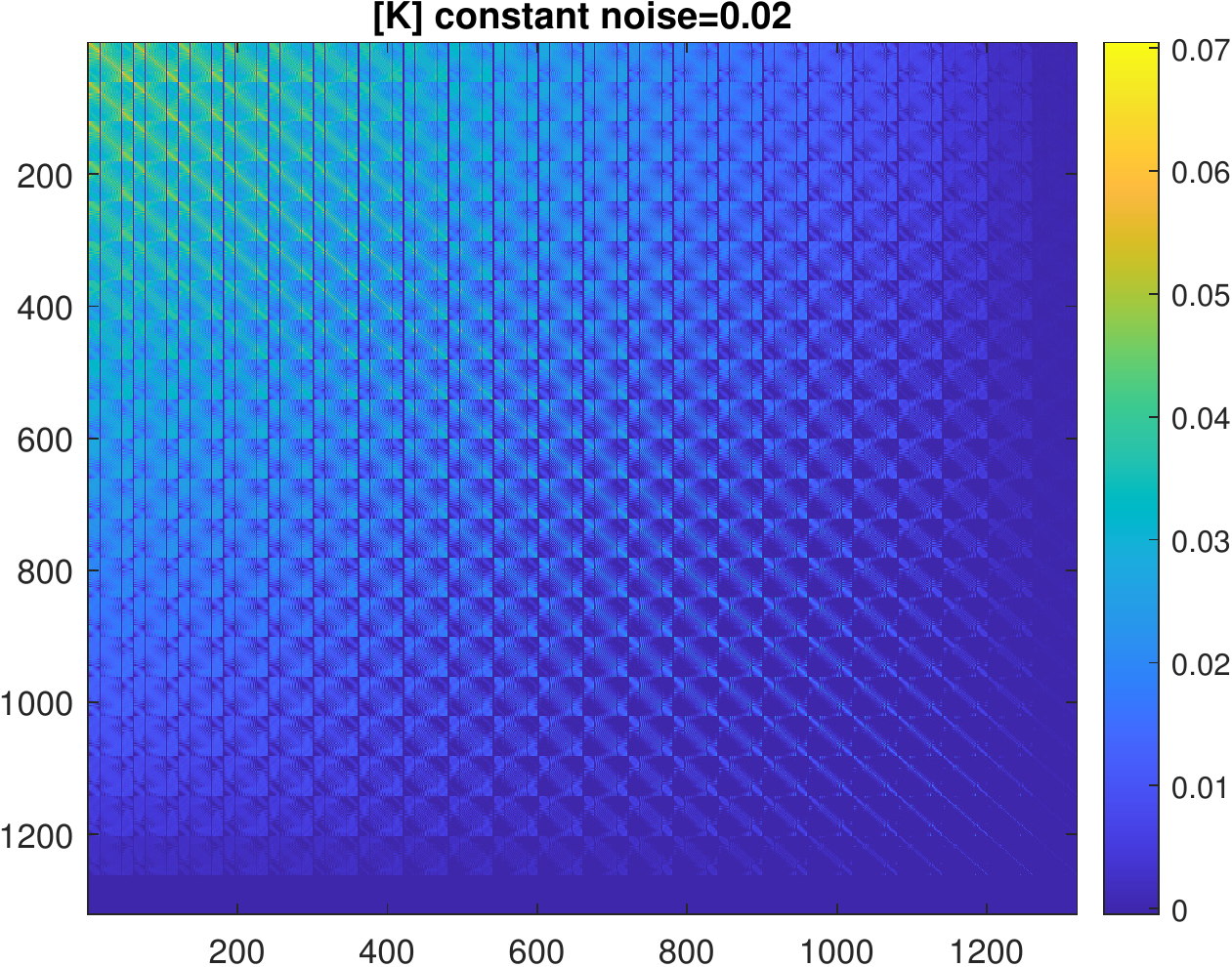}
    \includegraphics[width=0.49\textwidth]{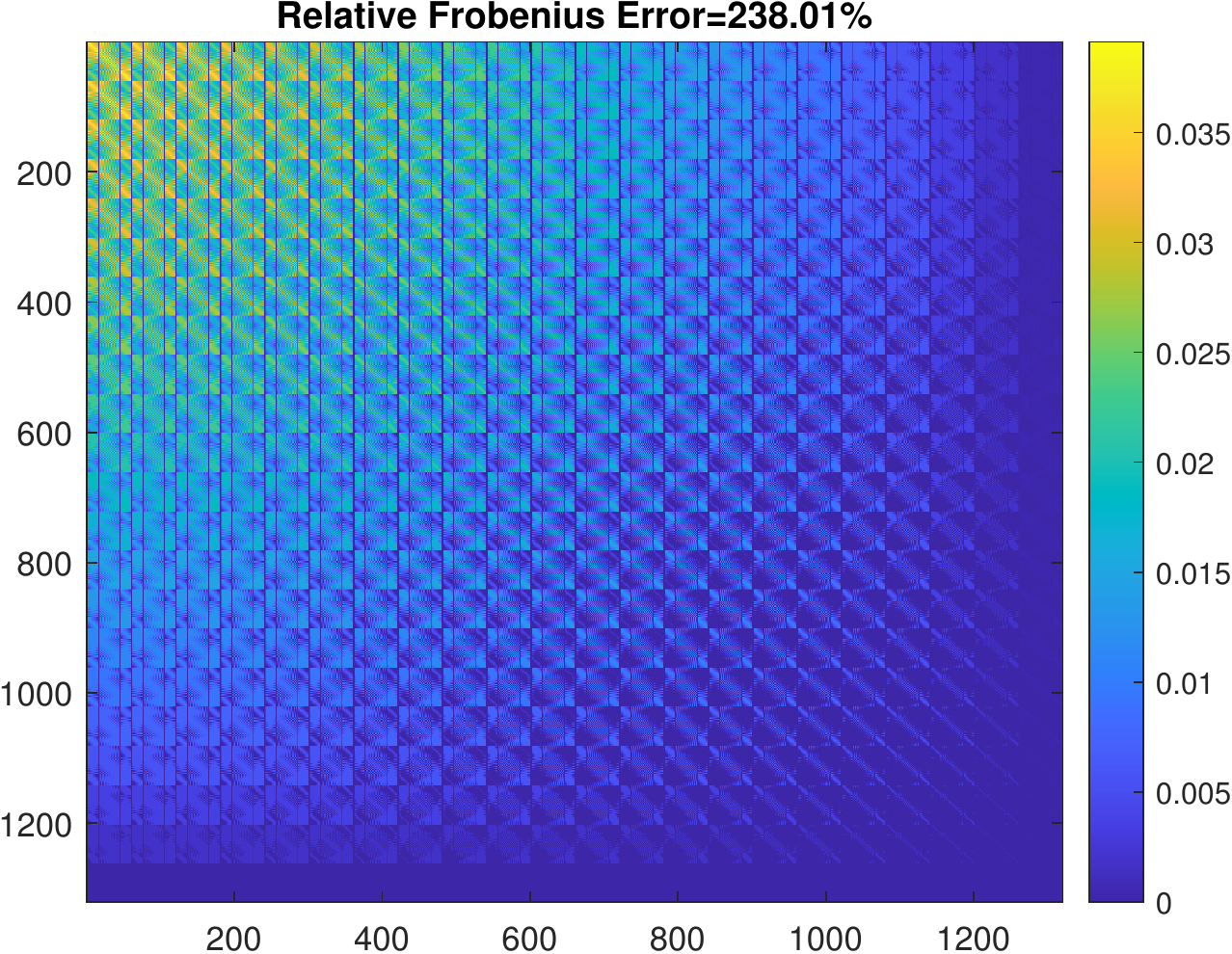}
    
    \includegraphics[width=0.49\textwidth]{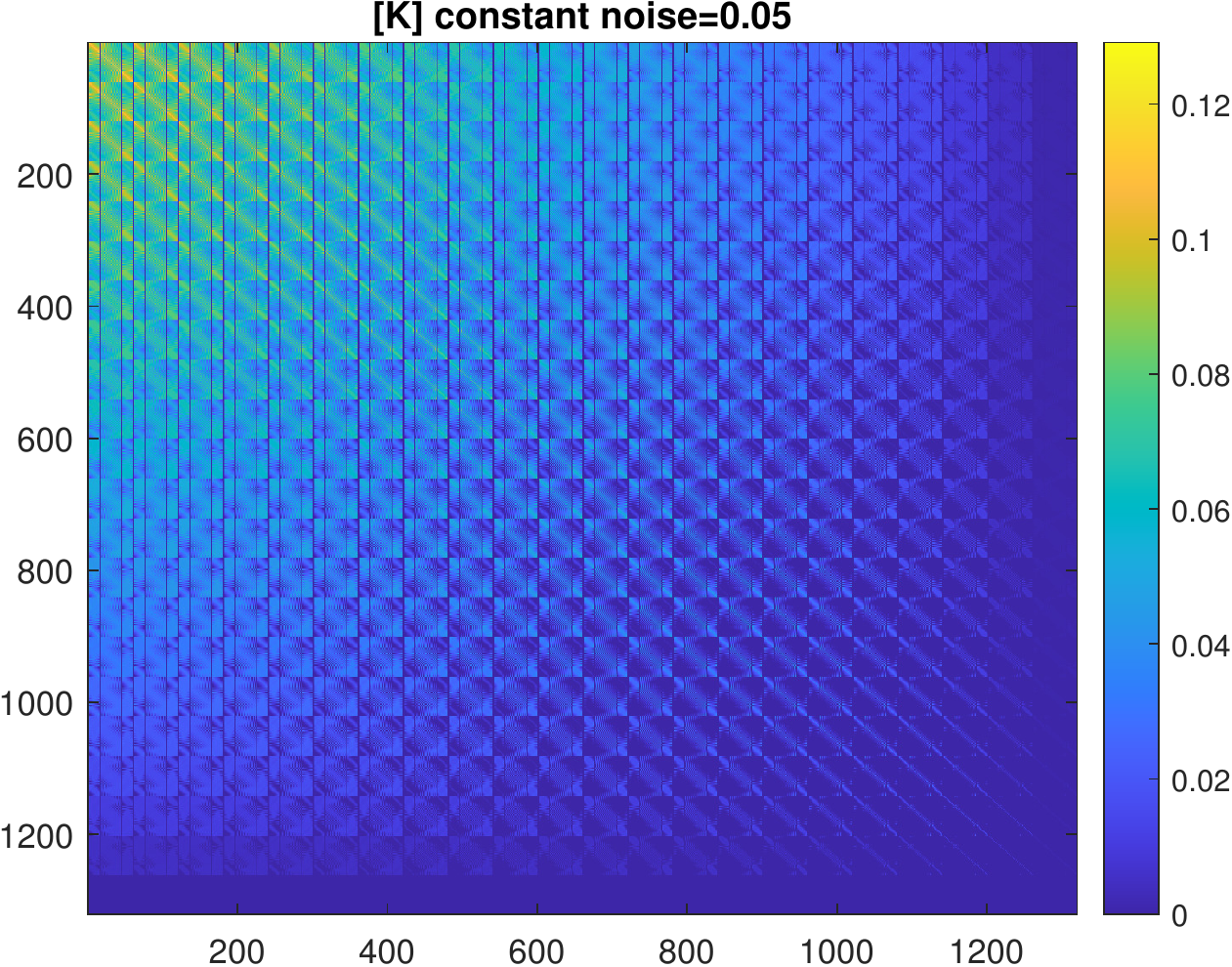}
    \includegraphics[width=0.49\textwidth]{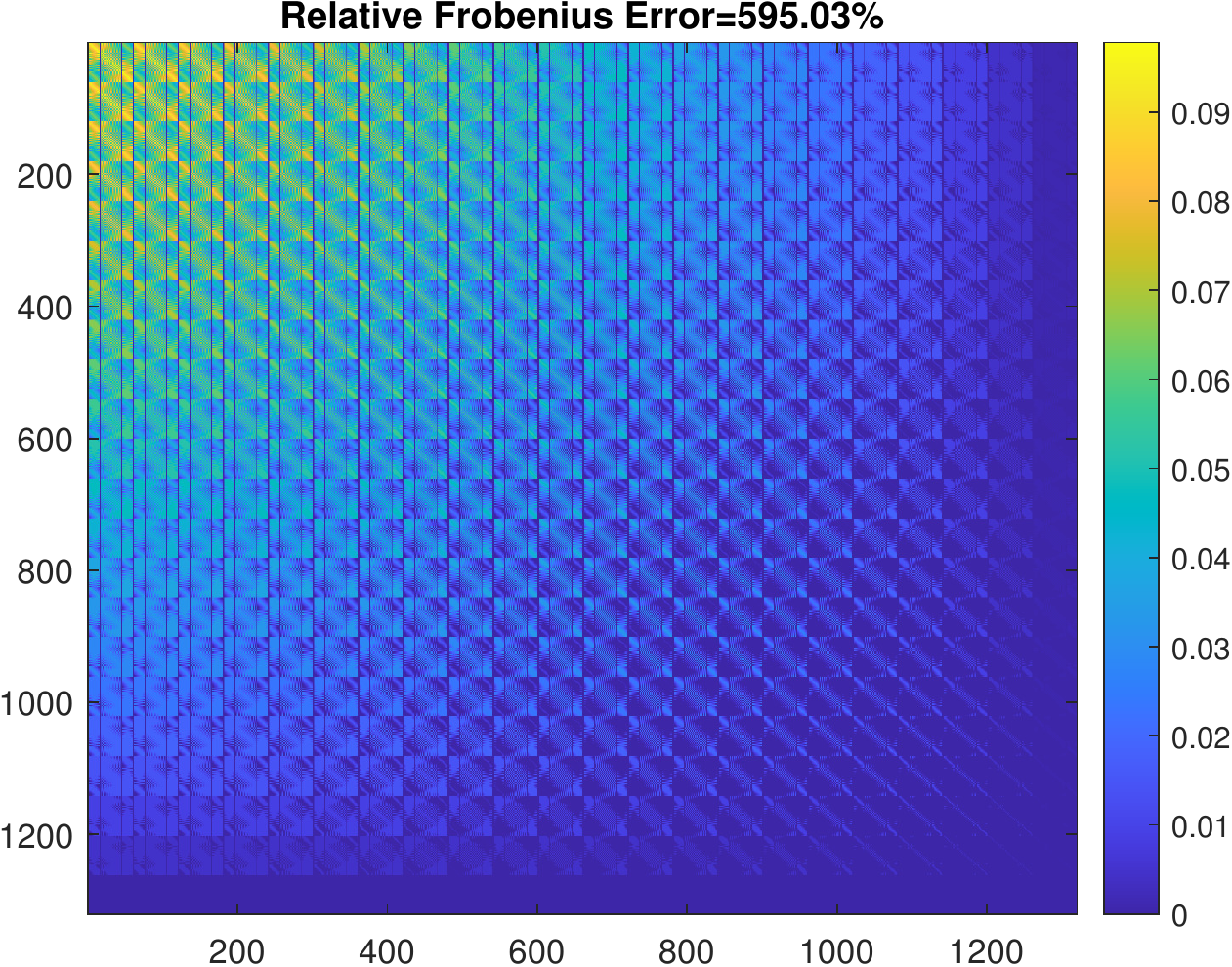}
    \caption{
    Left: $[K]$ with constant noise.
    Right: absolute difference between $[K]$ with constant noise and $[K]$ with no noise.
    First row: constant noise = 0.01.
    Second row: constant noise = 0.02.
    Third row: constant noise = 0.05.
    Grid: $43\times16\times16$, $I=15,L=42$.
    }
    \label{fig:Knoise0}
\end{figure}

\bigskip \bigskip
\textbf{Experiment 2: $c$ is variable and $c^{-2} \notin S_6$.}

Next, we test the ability of the algorithm in recovering a variable speed $c$ with $c^{-2} \notin S_6$. The speed in use is
$$
    c(x,y) = 1+0.08\sin{\pi x}+0.06\cos{\pi y},
$$
as is illustrated in Figure~\ref{fig:exp2_c}.
The reconstructed images of $c$ with $0\%$, $5\%$ and $50\%$ of noise are shown in Figure~\ref{fig:exp2}. 
In this case, we cannot expect to reconstruct the exact discrete version of $c^{-2}$. Instead, what the algorithm yields is the $L^2$-orthogonal projection of $c^{-2}$ onto the subspace $S_6$. This is due to the use of Tikhonov regularization when solving for $[c^{-2}]$. See the numerical validation in Figure~\ref{fig:exp2}.


\begin{figure}[!htb]
    \centering
    \includegraphics[width=0.49\textwidth]{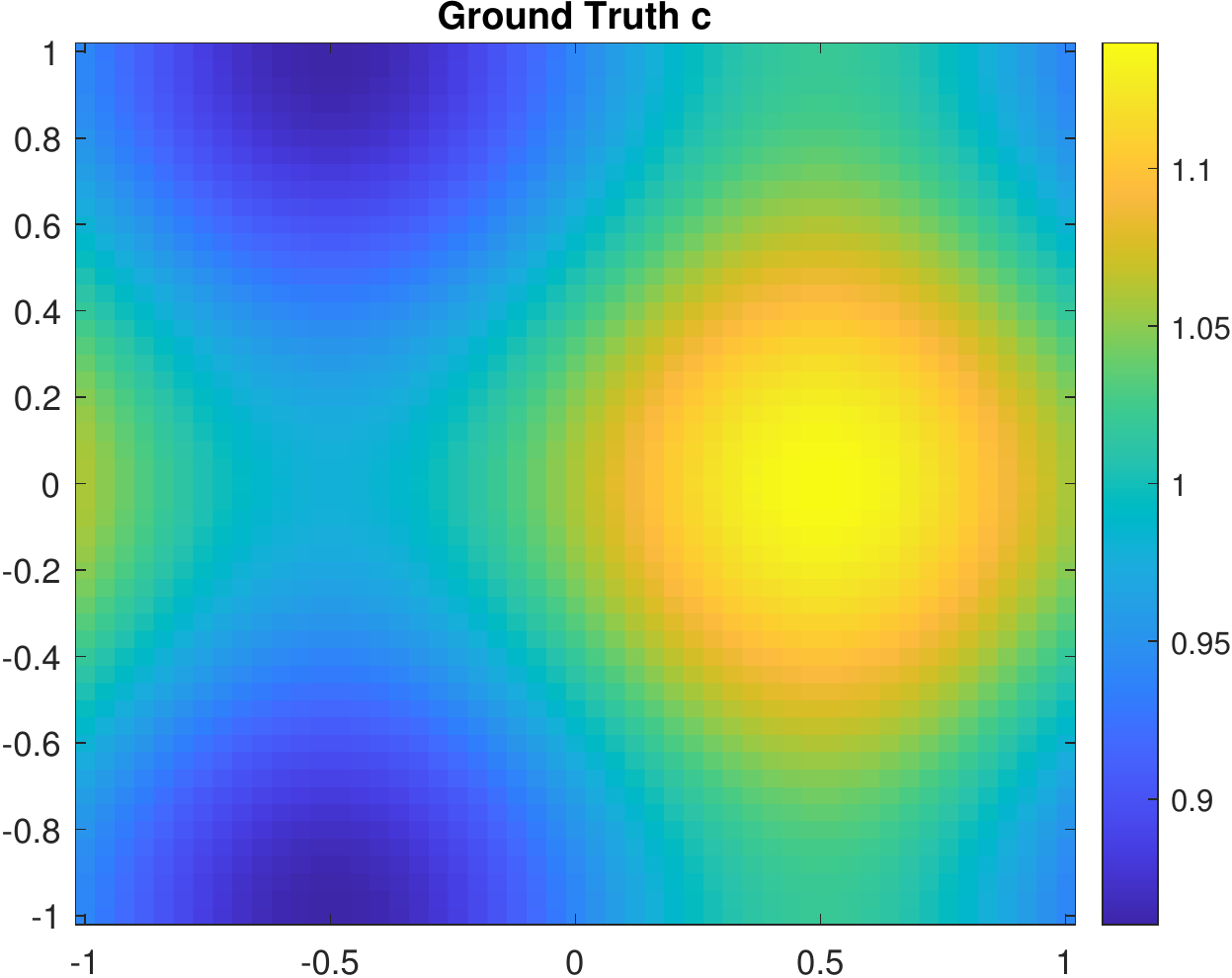}
    \includegraphics[width=0.49\textwidth]{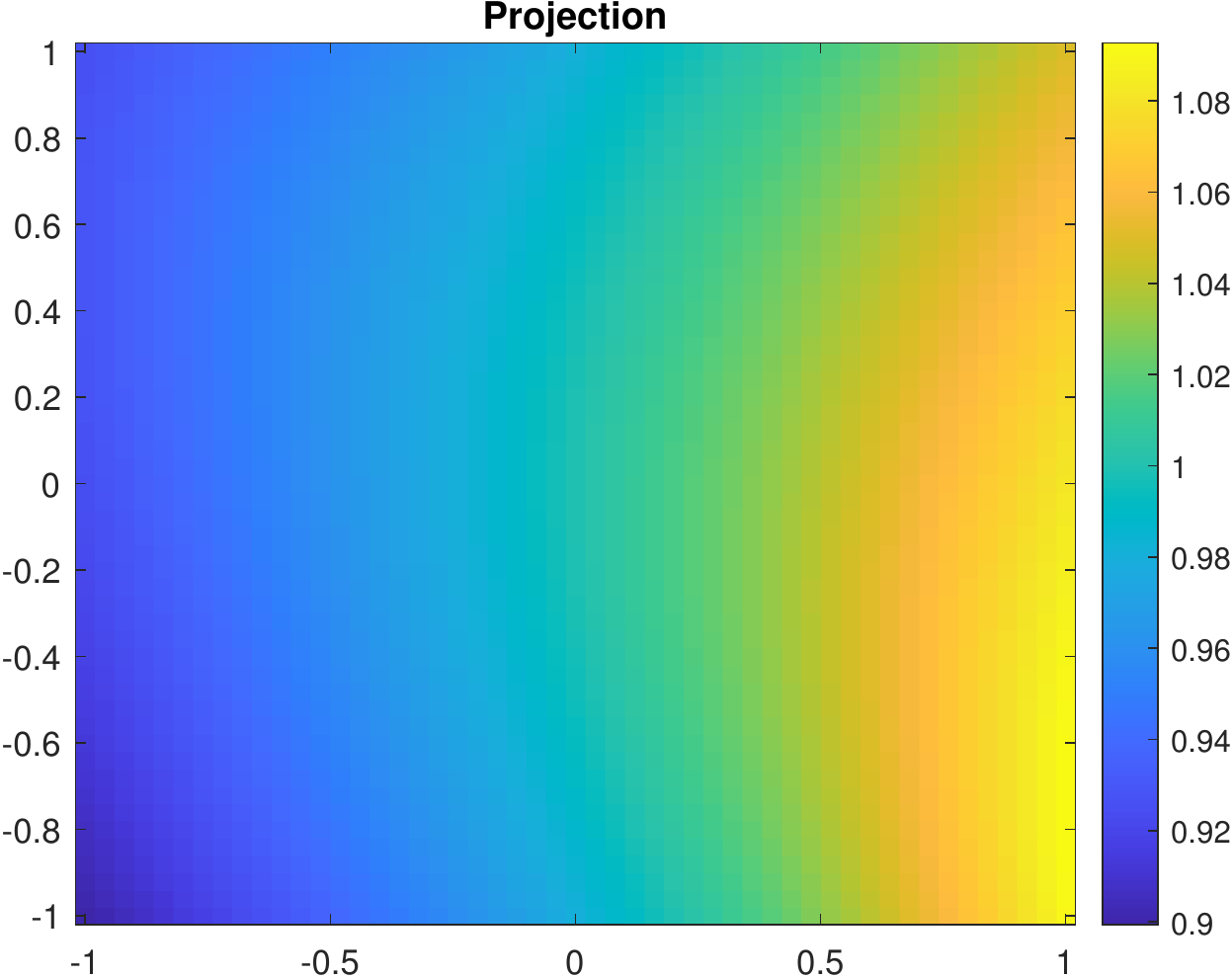}
    \caption{
     Left: the variable speed $c(x,y) = 1+0.08\sin{\pi x}+0.06\cos{\pi y}$. 
     Right: orthogonal projection of $c$ on $S_6$.}
    \label{fig:exp2_c}
\end{figure}
\begin{figure}[p]
    \centering
    \includegraphics[width=0.49\textwidth]{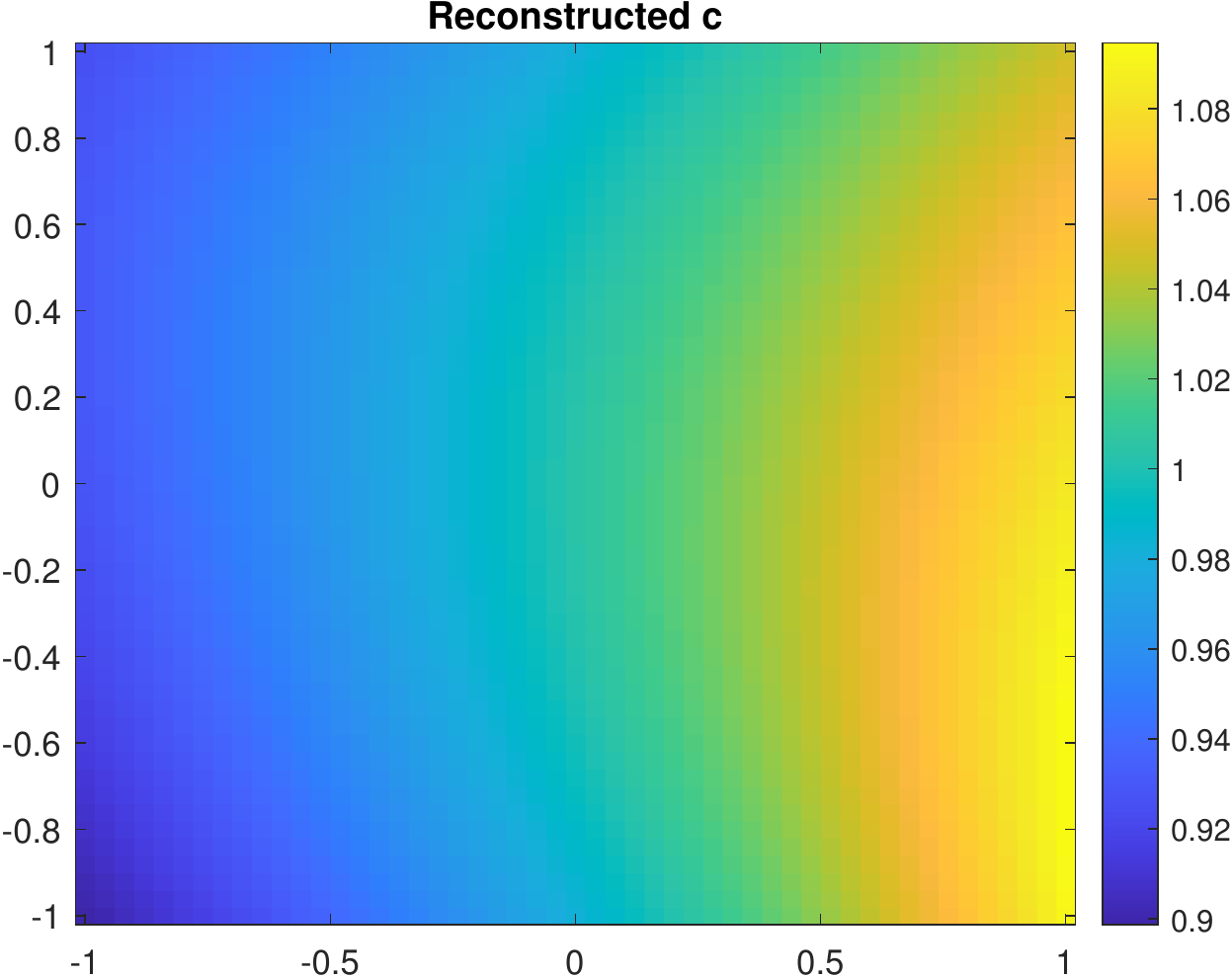}
    \includegraphics[width=0.49\textwidth]{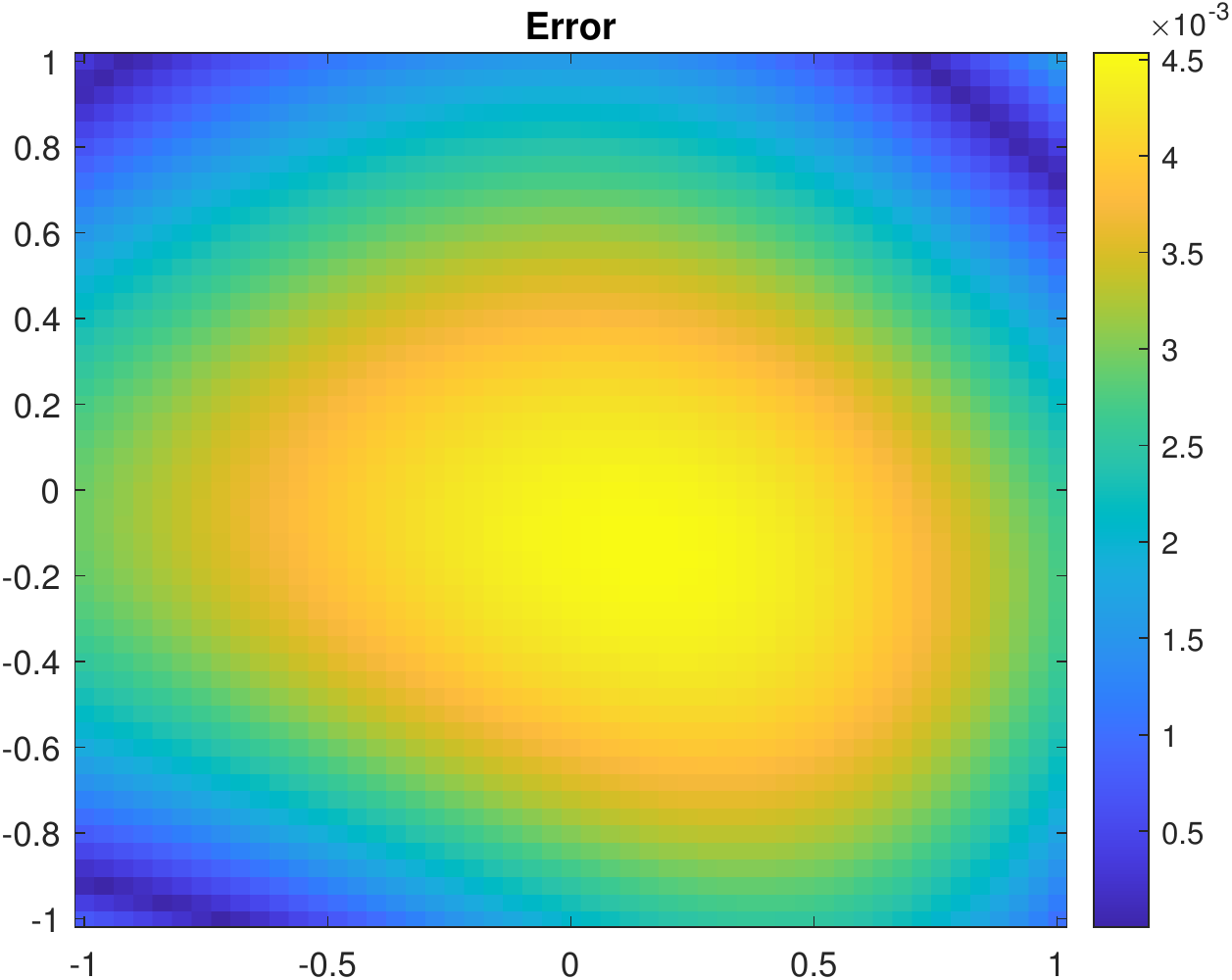}
    
    \includegraphics[width=0.49\textwidth]{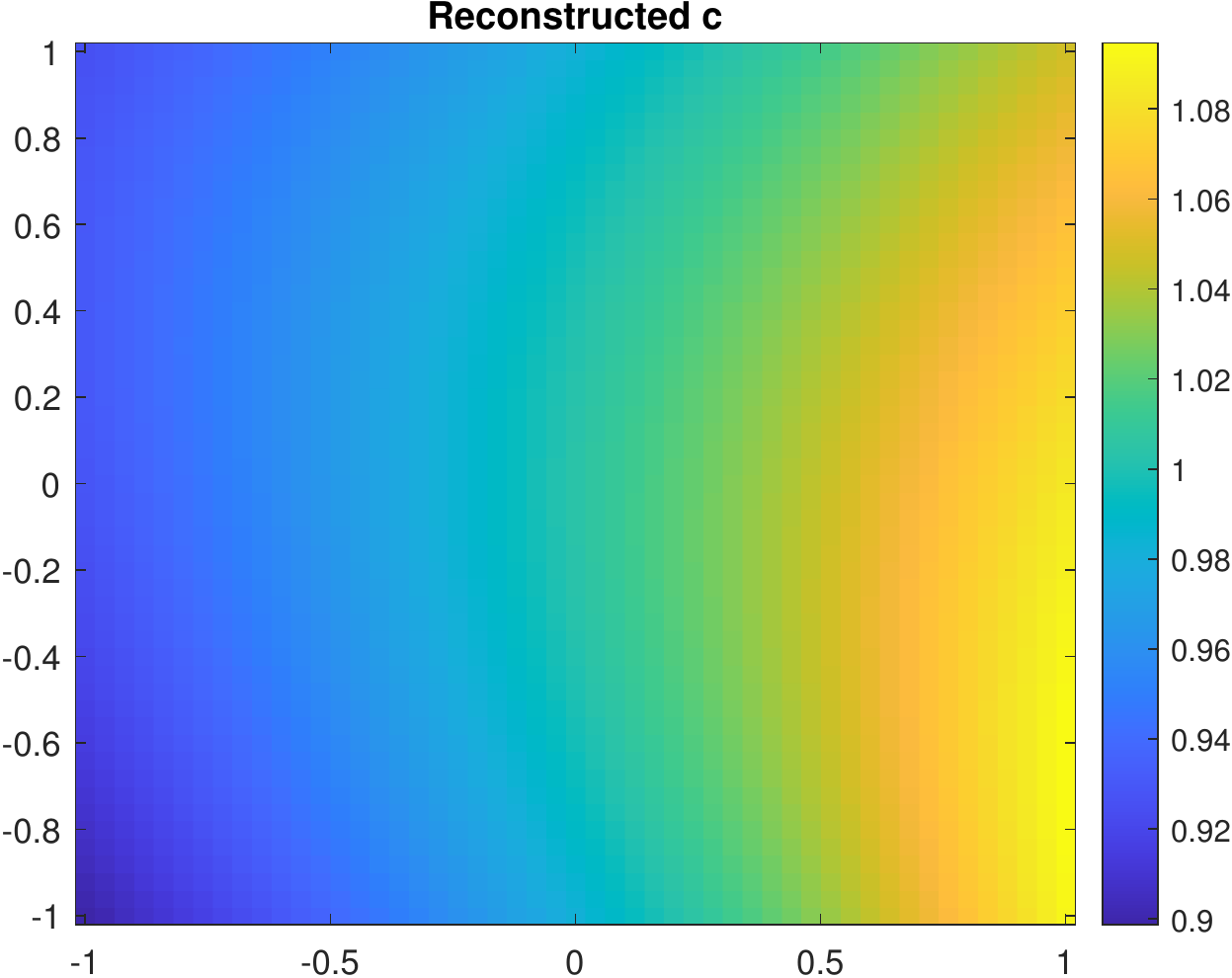}
    \includegraphics[width=0.49\textwidth]{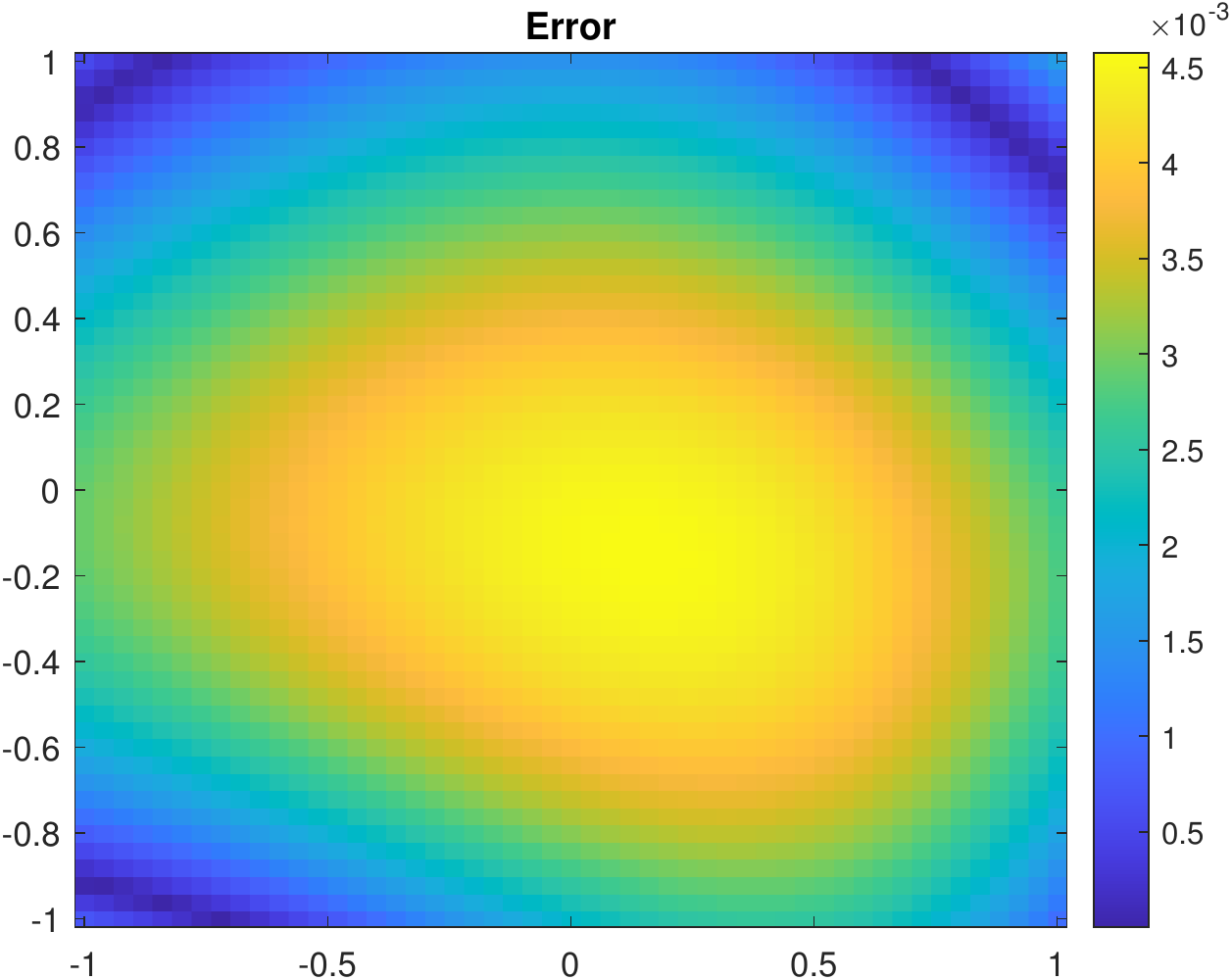}
    
    \includegraphics[width=0.49\textwidth]{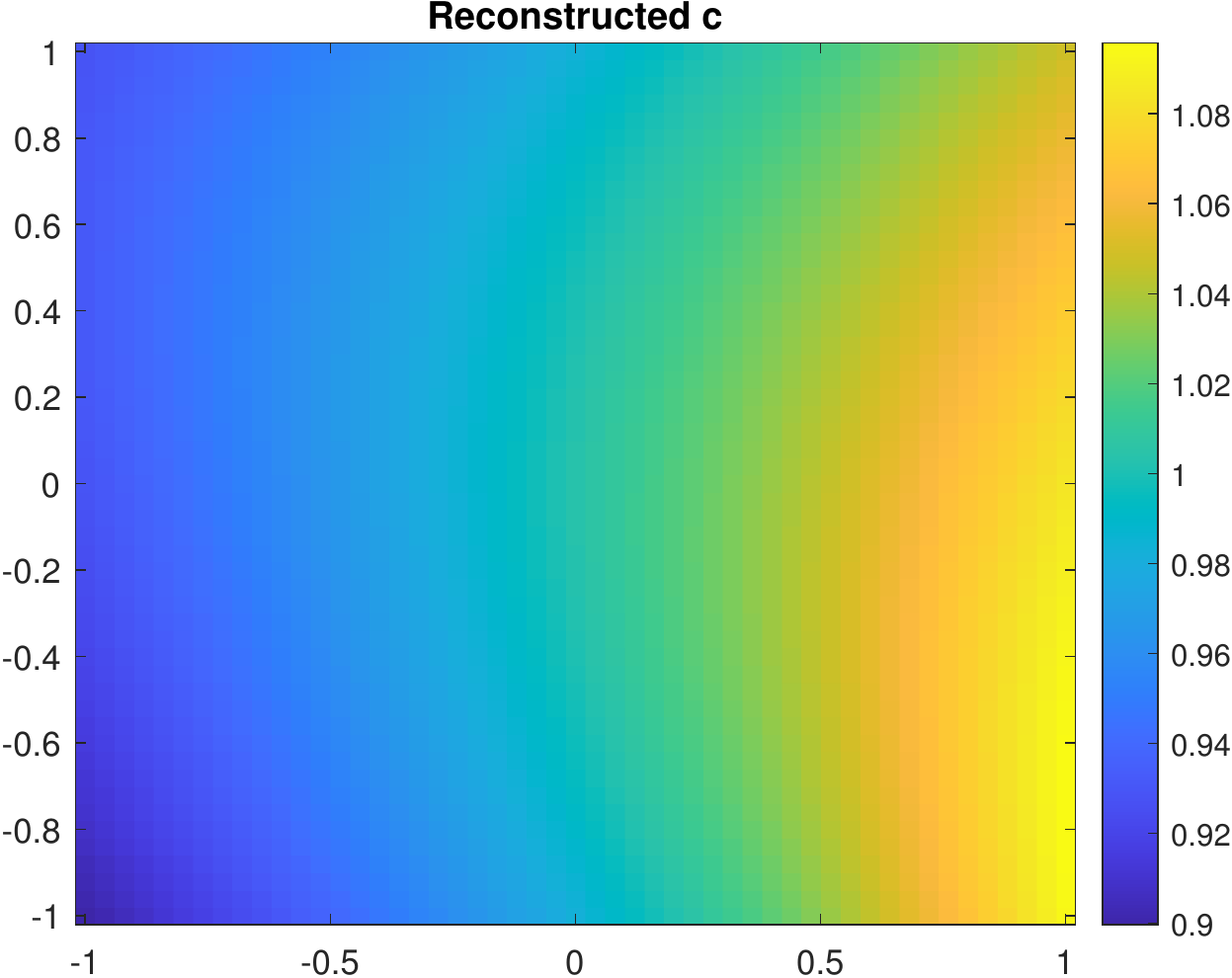}
    \includegraphics[width=0.49\textwidth]{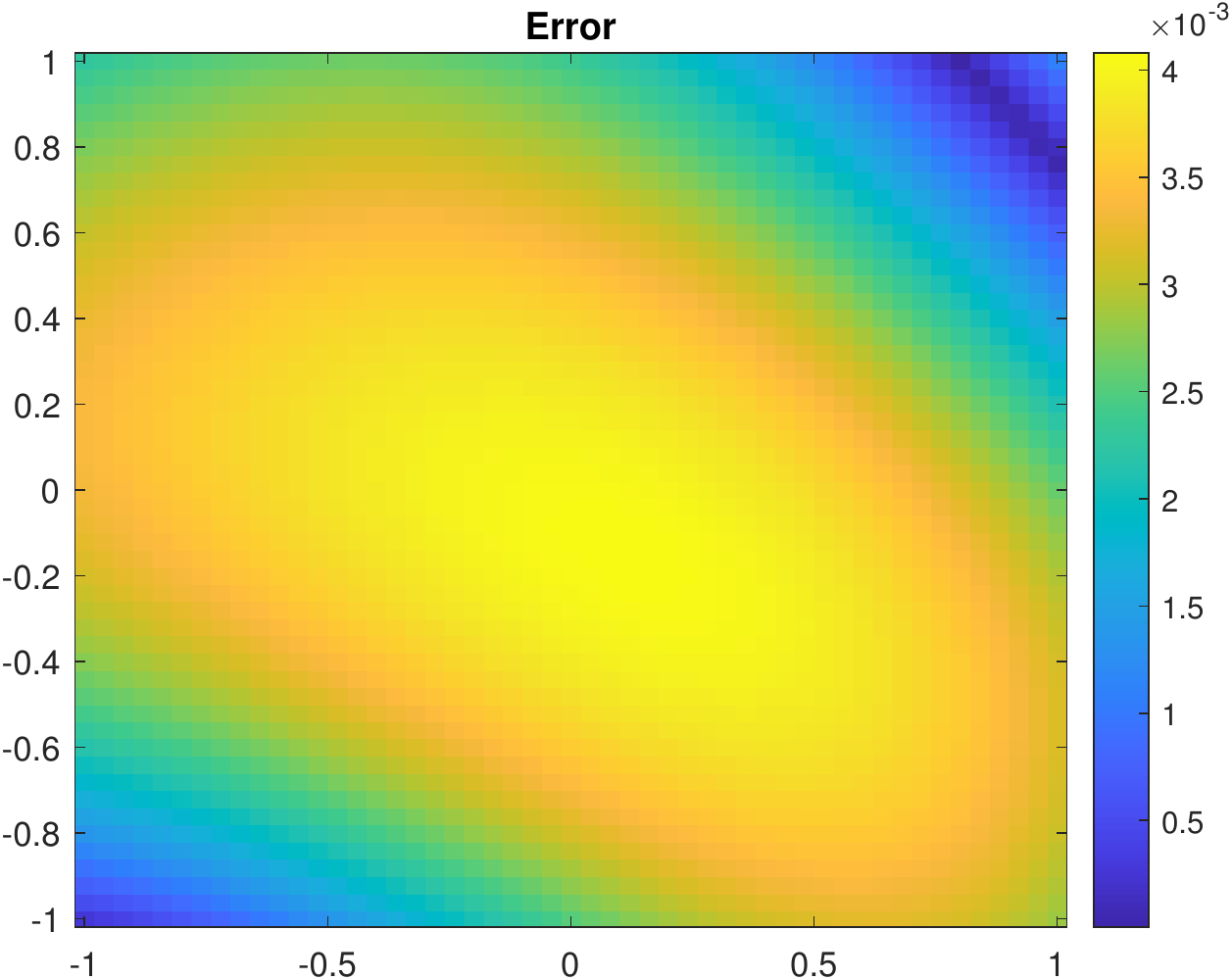}
    \caption{
    Reconstructions of the variable speed $c(x,y) = 1+0.08\sin{\pi x}+0.06\cos{\pi y}$.
    Left column: reconstructed $c$. 
    Right column: error between the reconstruction and the orthogonal projection of the ground truth. 
    First row: $0\%$ noise; the relative $L^2$-error is $0.3144\%$.
    Second row: $5\%$ noise; the relative $L^2$-error is $0.3153\%$.
    Third row: $50\%$ noise; the relative $L^2$-error is $0.3231\%$.
    Grid: $323\times51\times51$, $I=50,L=322$. 
    }
    \label{fig:exp2}
\end{figure}

\bigskip \bigskip
\textbf{Experiment 3: partial data.}


We test the algorithm with only partial knowledge of the ND map.
We use the constant speed $c=1$, see Figure~\ref{fig:exp1}, although the variable speed in Experiment 2 works almost equally well. Recall that the computational domain $\Omega$ is a square with four sides $x=\pm 1$ and $y=\pm 1$. We remove the knowledge of the ND map from the three sides $y=-1$, $x=1$, $y=1$ one after another. The reconstructions are shown in Figure~\ref{fig:exp3}, where the algorithm performs quite well. This is due to the large stoppage $T$ we choose.
No noise is imposed in this experiment.

\begin{figure}[p]
    \centering
    \includegraphics[width=0.49\textwidth]{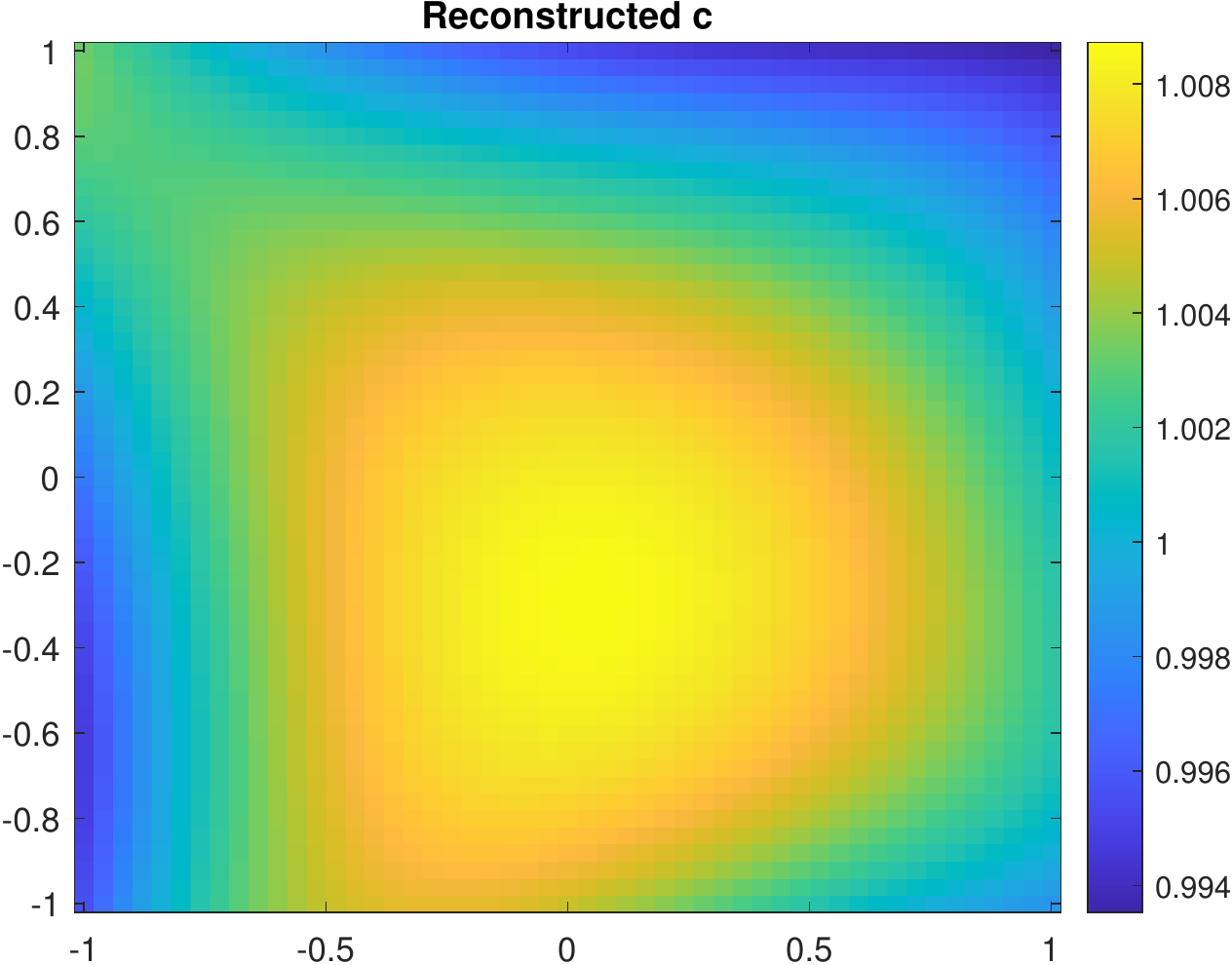}
    \includegraphics[width=0.49\textwidth]{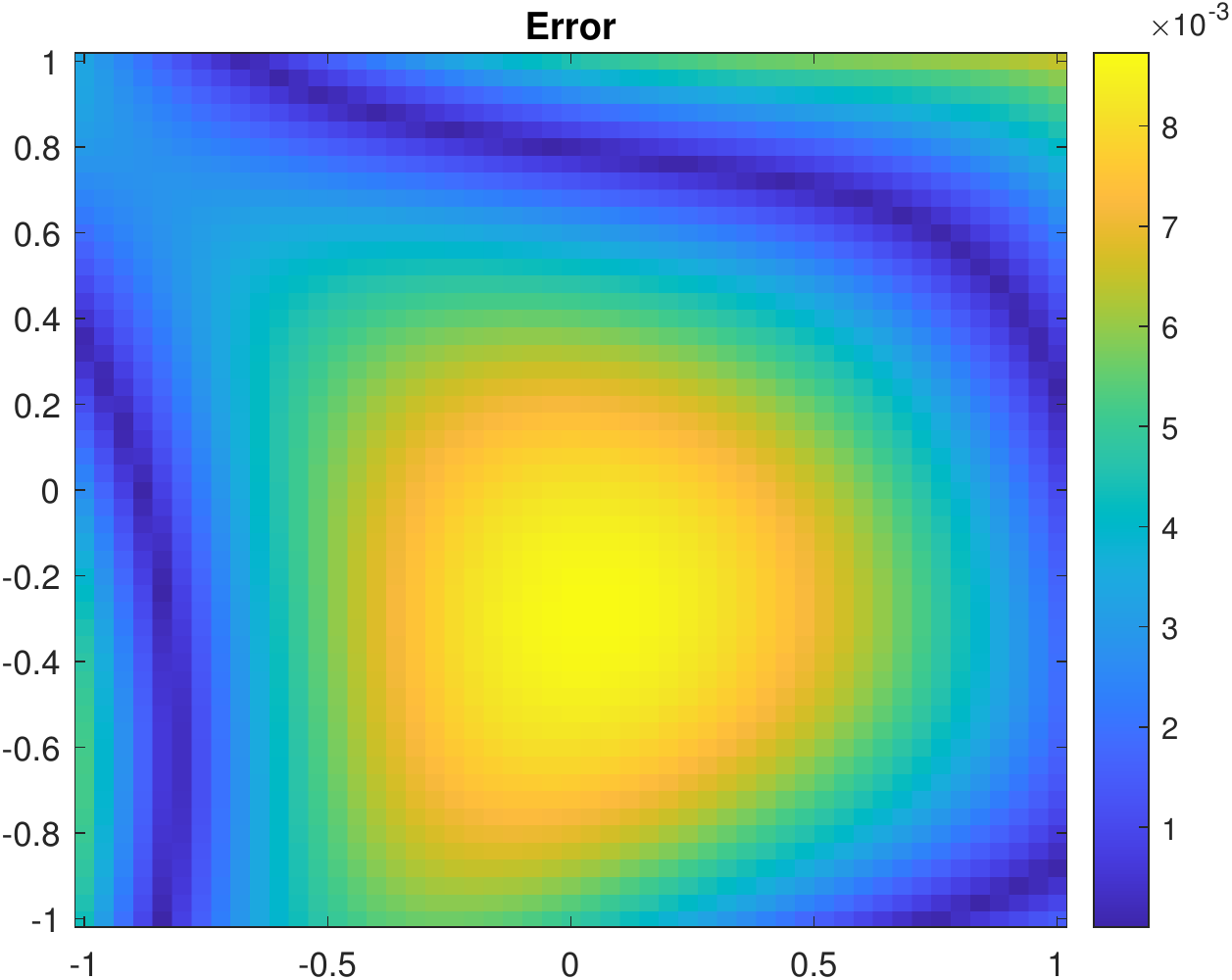}
    
    \includegraphics[width=0.49\textwidth]{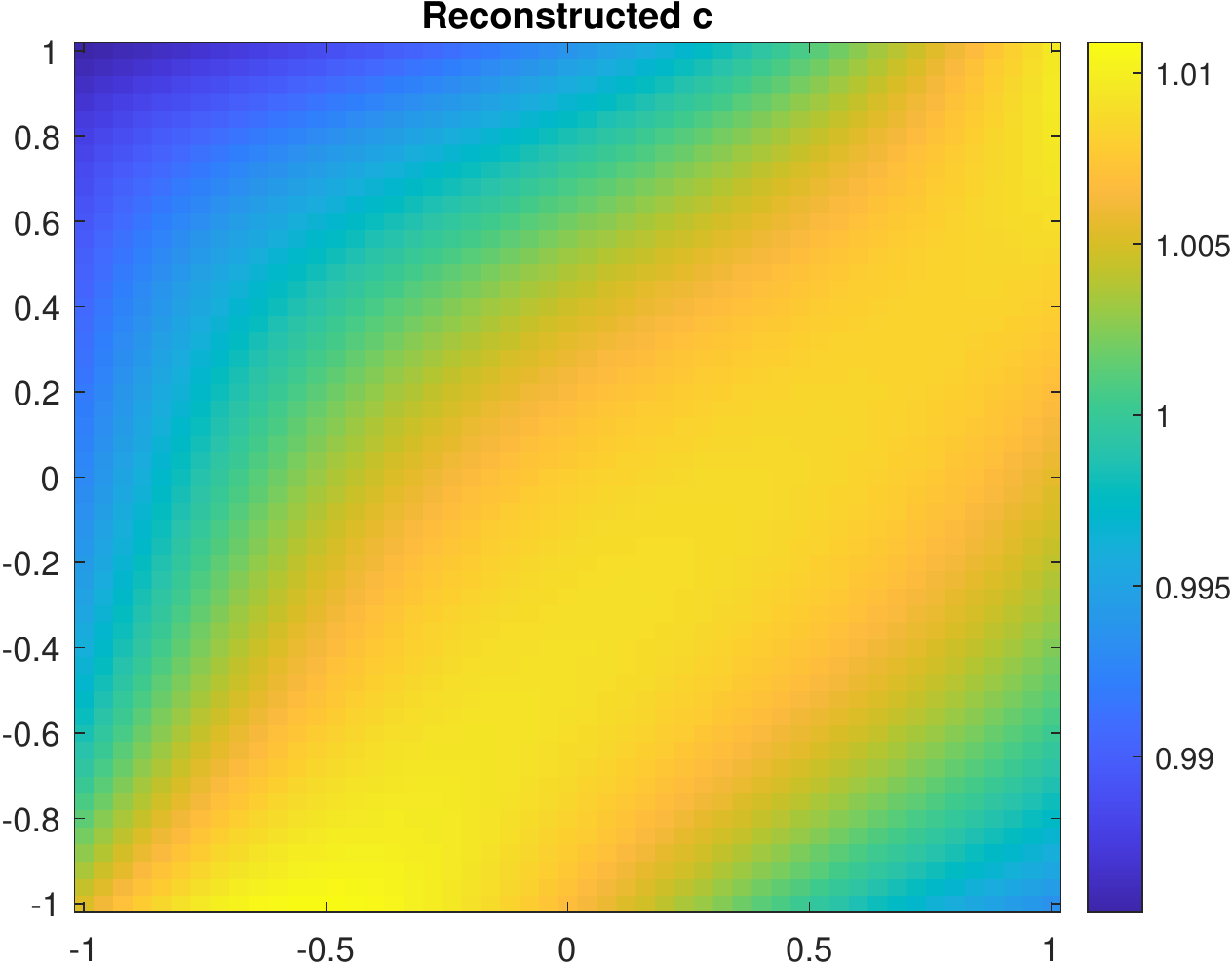}
    \includegraphics[width=0.49\textwidth]{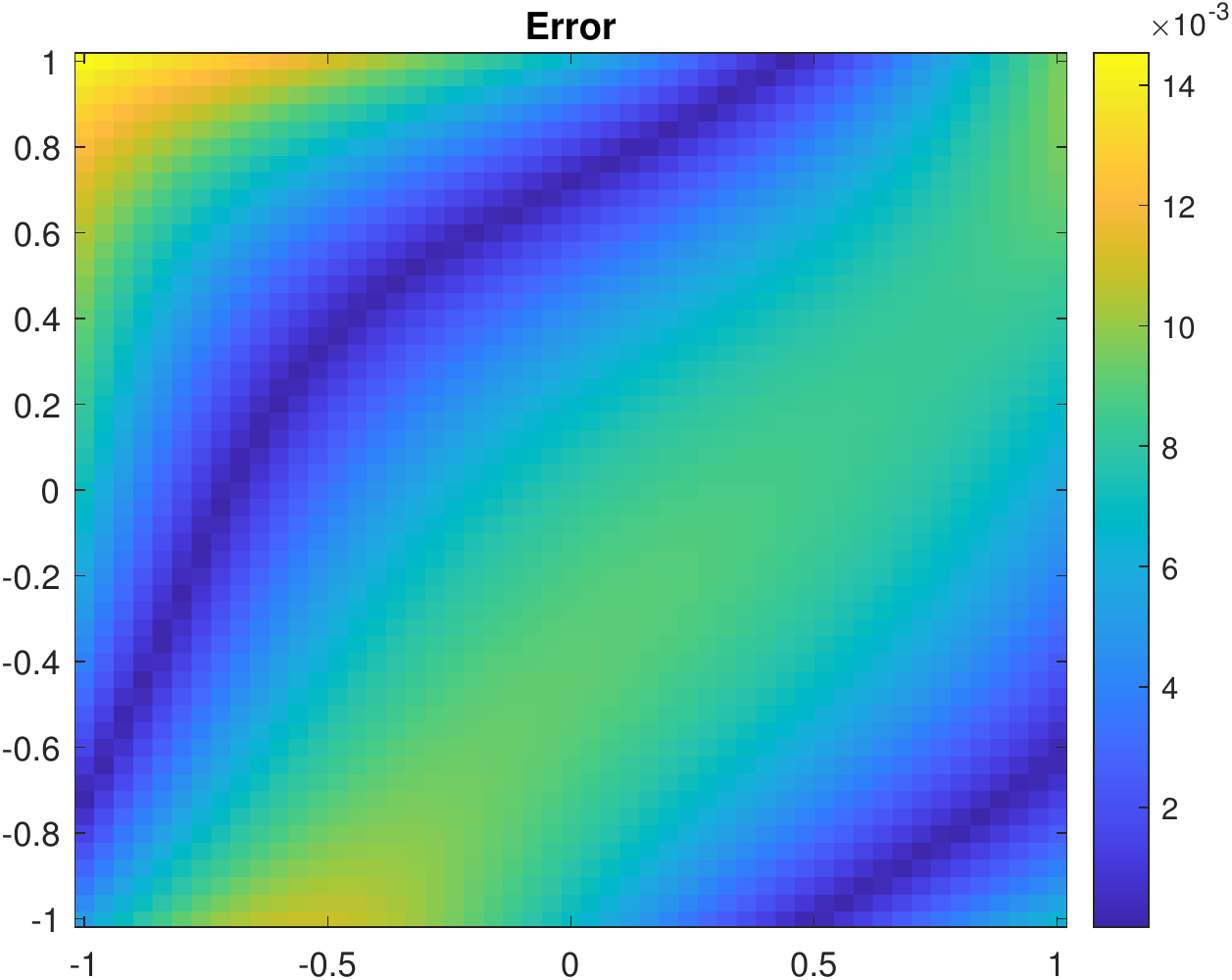}
    
    \includegraphics[width=0.49\textwidth]{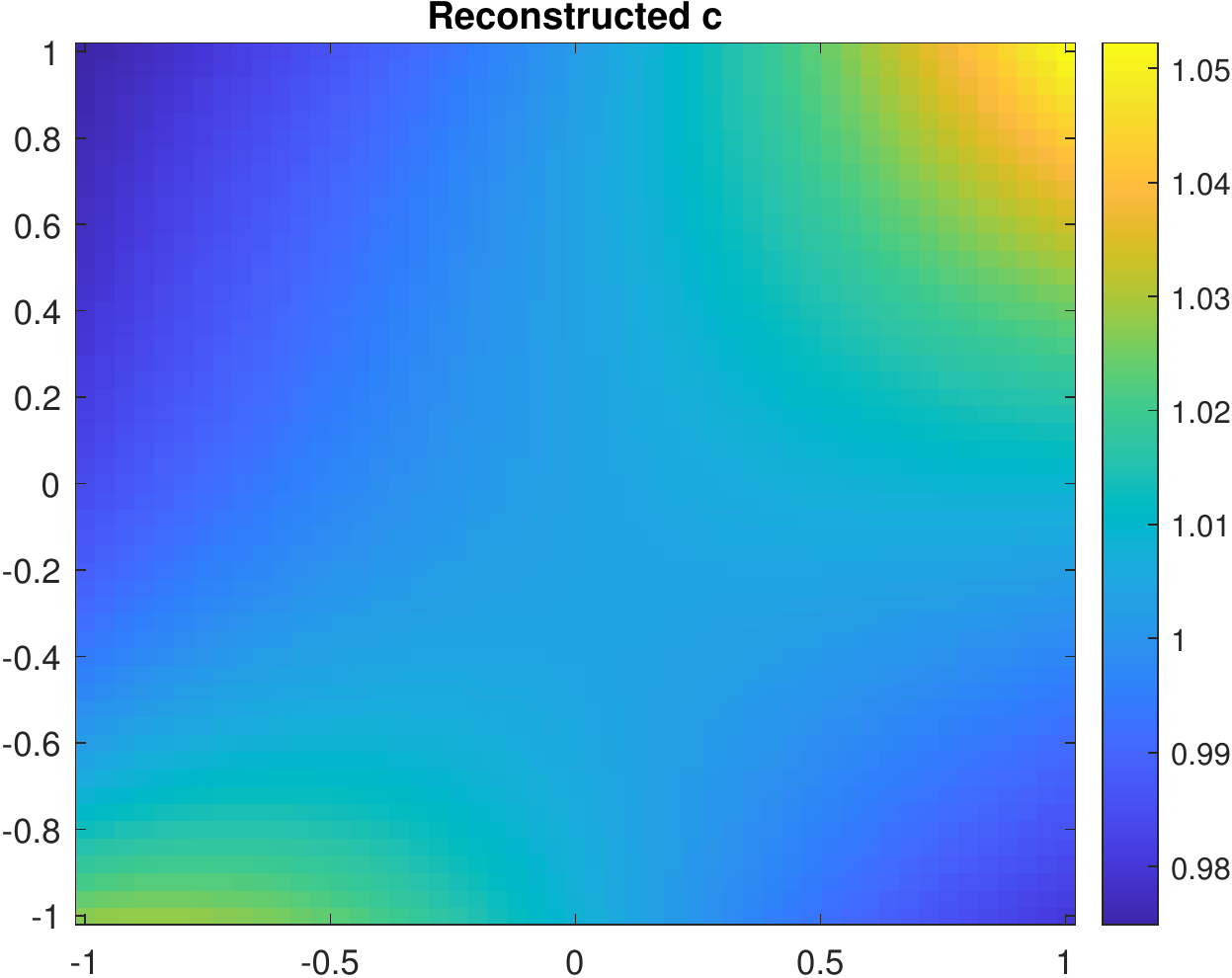}
    \includegraphics[width=0.49\textwidth]{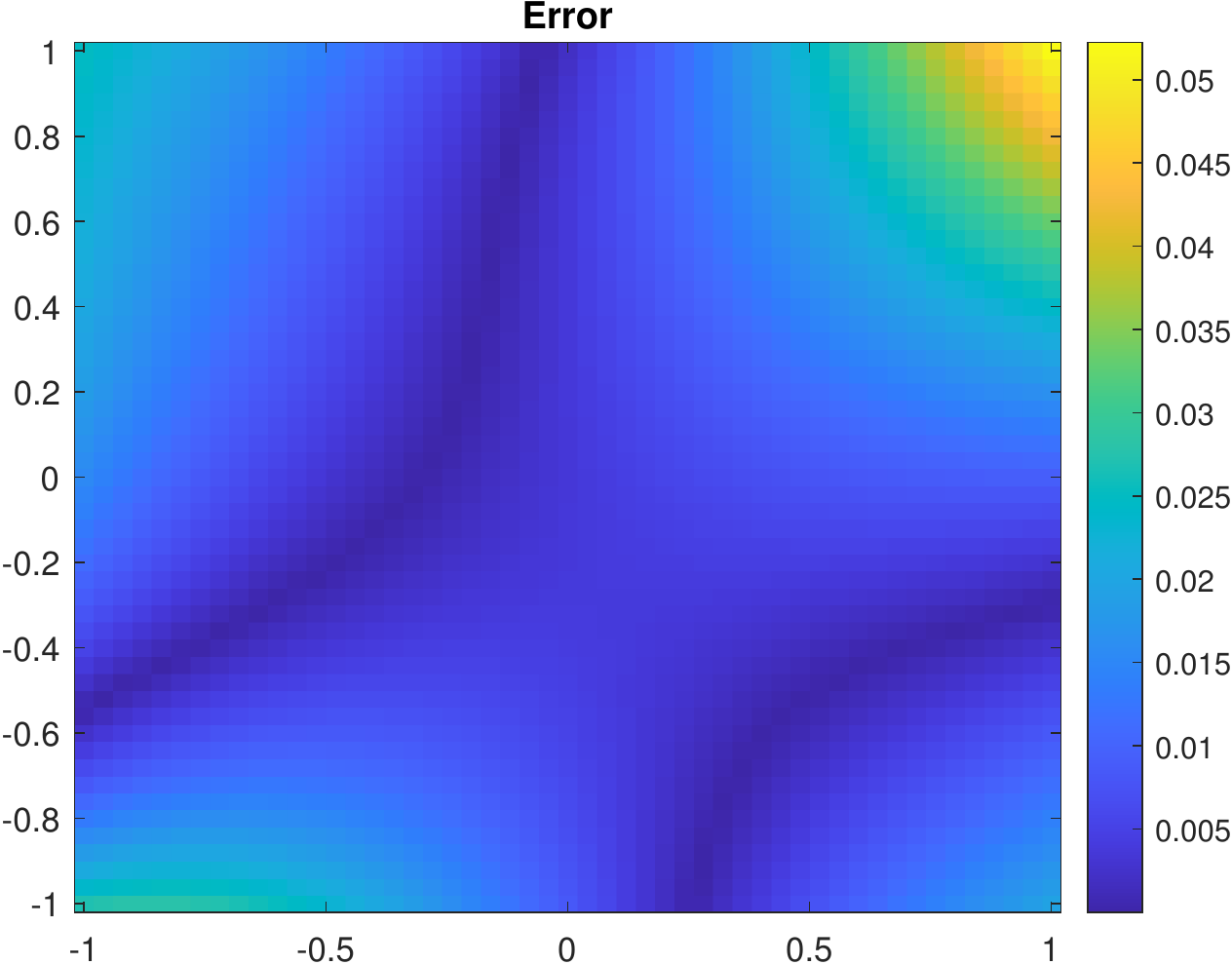}
    \caption{
    Left column: reconstructed $c$. 
    Right column: error between the reconstruction and the ground truth.
    First row: no data on $y=-1$; the relative $L^2$-error is $0.4954\%$.
    Second row: no data on $y=-1$ and $x=1$; the relative $L^2$-error is $0.6583\%$.
    Third row: no data on $y=\pm 1$ and $x=1$; the relative $L^2$-error is $1.2518\%$.
    Grid: $283\times51\times51$, $I=50,L=282$.
    }
    \label{fig:exp3}
\end{figure}

\bigskip \bigskip
\textbf{Experiment 4: $c$ is discontinuous}

This case is not covered by the theory, as Algorithm~\ref{alg:main} is derived under the assumption that $c$ is smooth. We still test it anyway. The wave speed is 
$$
    c(x,y)=\begin{cases}
    1&(x,y)\in[-0.5,0.5]^2,\\
    0.5&(x,y)\in\Omega\setminus[-0.5,0.5]^2,
    \end{cases}
$$
see Figure~\ref{fig:exp4_c}.
Again, the algorithm is able to reconstruct only the orthogonal projection of the discontinuous speed on $S_6$. However, this project is smooth and does not look like the original discontinuous speed, see Figure~\ref{fig:exp4}. 
No noise is imposed in this experiment.

\begin{figure}[!htb]
    \centering
    \includegraphics[width=0.49\textwidth]{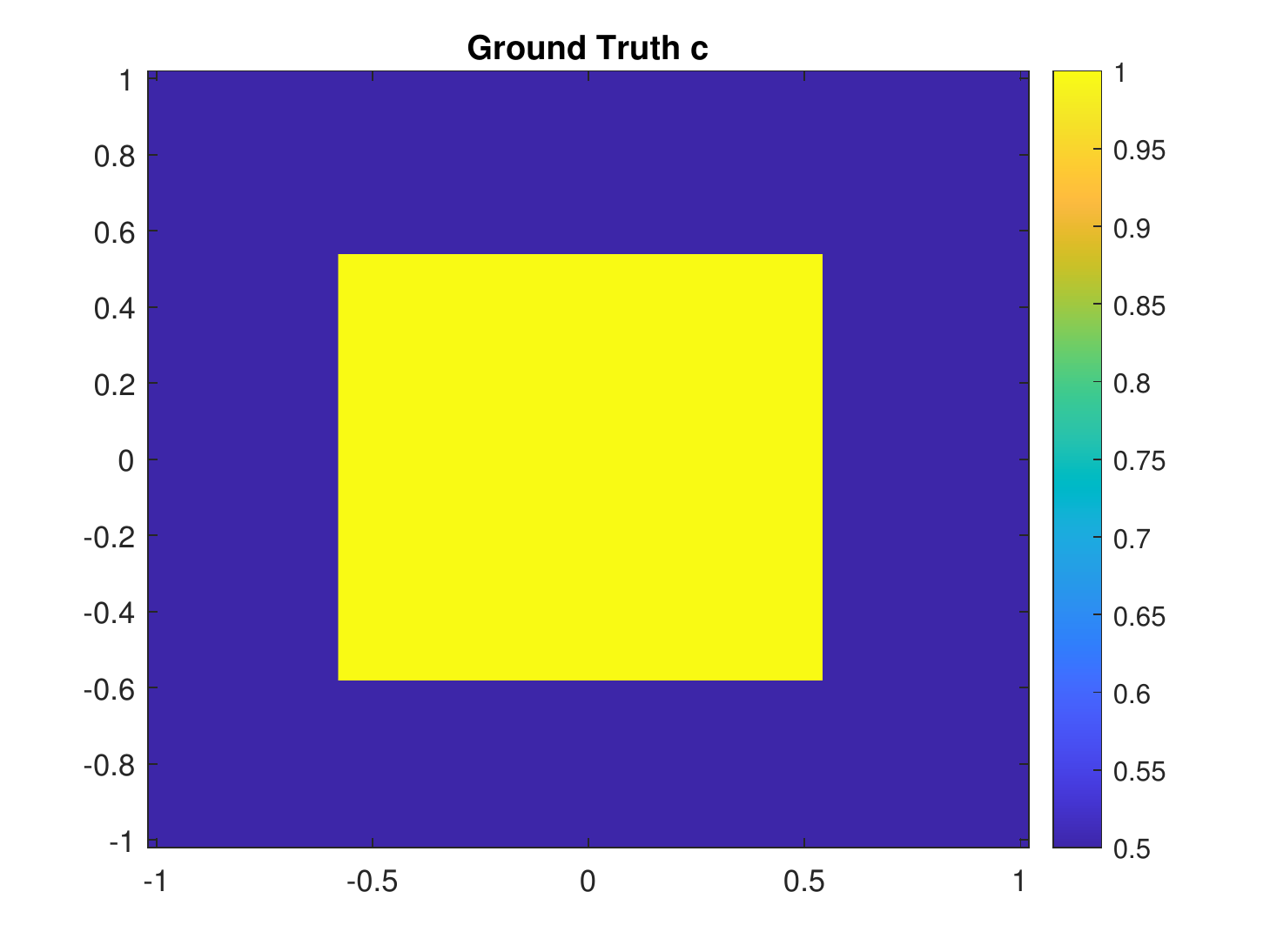}
    \includegraphics[width=0.49\textwidth]{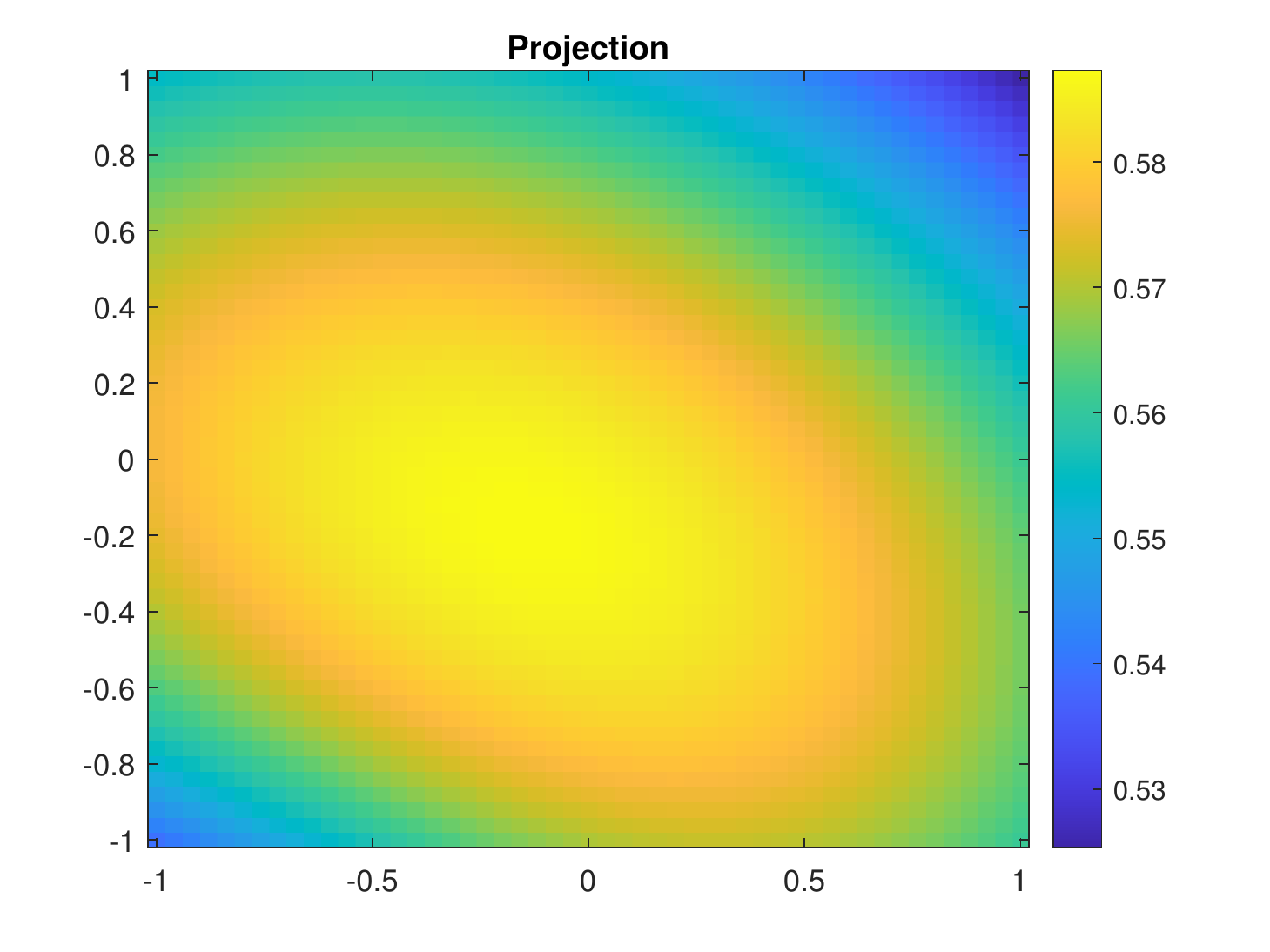}
    \caption{The ground truth speed $c$ and its projection.}
    \label{fig:exp4_c}
\end{figure}
\begin{figure}[!htb]
    \centering
    \includegraphics[width=0.49\textwidth]{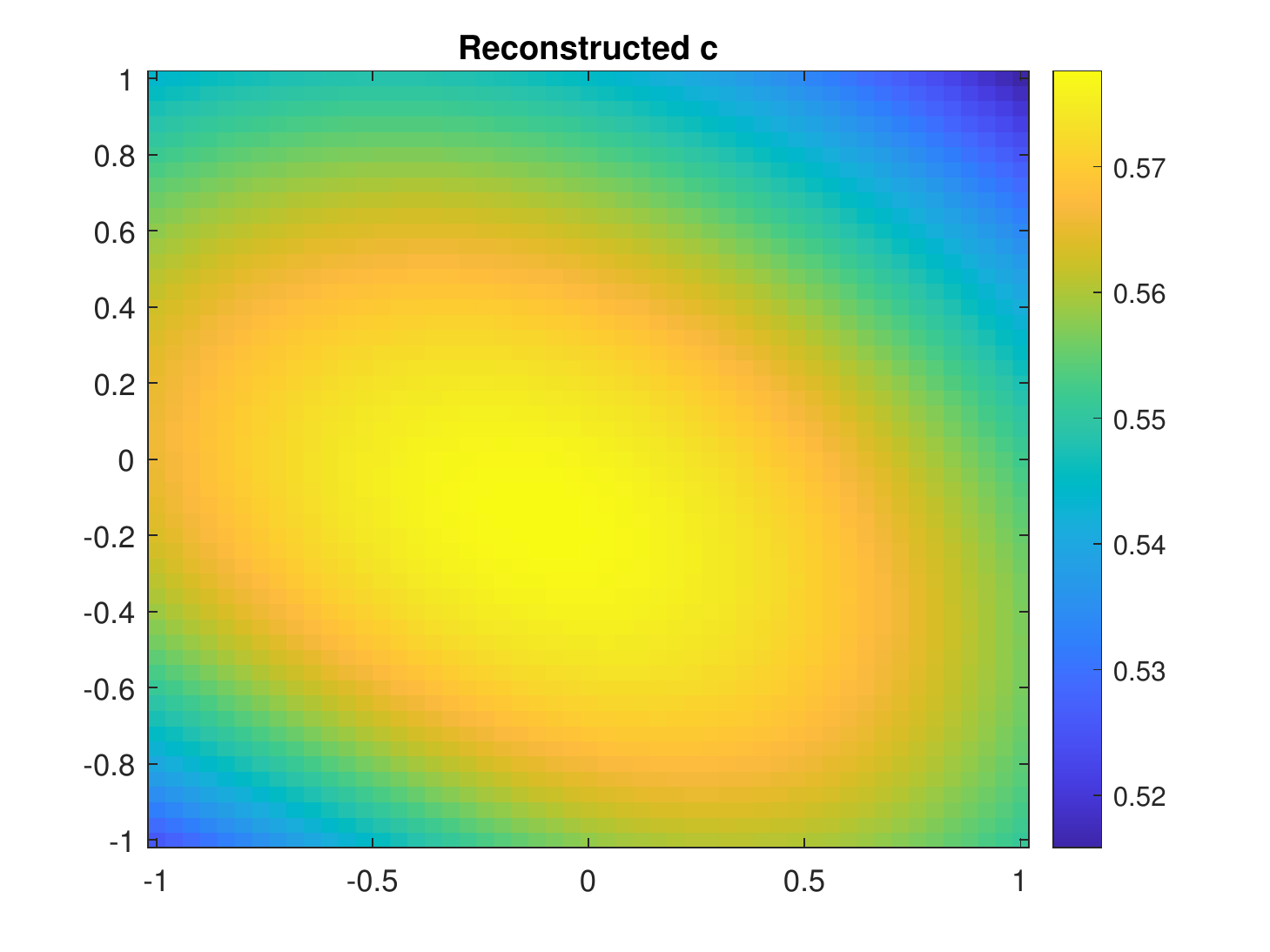}
    \includegraphics[width=0.49\textwidth]{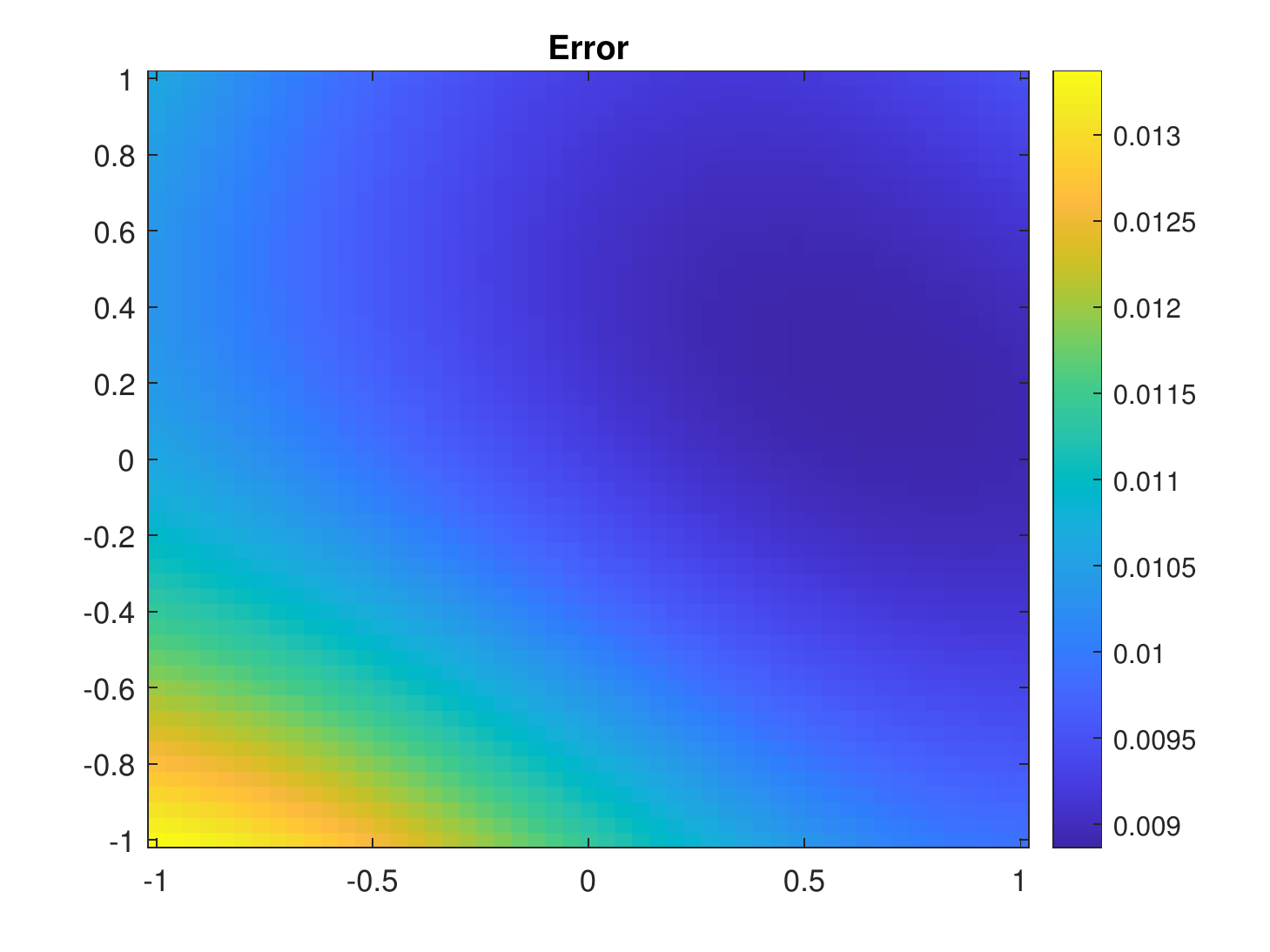}
    \caption{
    Left: reconstructed $c$.
    Right: error between the reconstruction and the orthogonal projection of the ground truth. The relative $L^2$ error is $1.7289\%$.
    Grid: $283\times51\times51$, $I=50,L=282$. 
    }
    \label{fig:exp4}
\end{figure}

\newpage

\section*{Acknowledgement}
The authors are very grateful to Dr. Lauri Oksanen for bringing their attention to this problem, and for communicating many references. 
The research of T. Yang and Y. Yang is partially supported by the NSF grant DMS-1715178, DMS-2006881, and the start-up fund from Michigan State University.

\bibliographystyle{abbrv}
\bibliography{refs}

\begin{thebibliography}{10}

\bibitem{aicha2015stability}
I.~B. A{\"\i}cha.
\newblock Stability estimate for a hyperbolic inverse problem with
  time-dependent coefficient.
\newblock {\em Inverse Problems}, 31(12):125010, 2015.

\bibitem{bao2014sensitivity}
G.~Bao and H.~Zhang.
\newblock Sensitivity analysis of an inverse problem for the wave equation with
  caustics.
\newblock {\em Journal of the American Mathematical Society}, 27(4):953--981,
  2014.

\bibitem{belishev1988approach}
M.~Belishev.
\newblock On an approach to multidimensional inverse problems for the wave
  equation.
\newblock In {\em Soviet Math. Dokl}, volume~36, pages 481--484, 1988.

\bibitem{belishev1999dynamical}
M.~Belishev and V.~Y. Gotlib.
\newblock Dynamical variant of the bc-method: theory and numerical testing.
\newblock {\em Journal of Inverse and Ill-Posed Problems}, 7(3):221--240, 1999.

\bibitem{belishev2007recent}
M.~I. Belishev.
\newblock Recent progress in the boundary control method.
\newblock {\em Inverse problems}, 23(5):R1, 2007.

\bibitem{belishev2016numerical}
M.~I. Belishev, I.~B. Ivanov, I.~V. Kubyshkin, and V.~S. Semenov.
\newblock Numerical testing in determination of sound speed from a part of
  boundary by the bc-method.
\newblock {\em Journal of Inverse and Ill-posed Problems}, 24(2):159--180,
  2016.

\bibitem{belishev1992reconstruction}
M.~I. Belishev and Y.~V. Kuryiev.
\newblock To the reconstruction of a {R}iemannian manifold via its spectral
  data (bc--method).
\newblock {\em Communications in partial differential equations},
  17(5-6):767--804, 1992.

\bibitem{bellassoued2017stable}
M.~Bellassoued and I.~B. A{\"\i}cha.
\newblock Stable determination outside a cloaking region of two time-dependent
  coefficients in an hyperbolic equation from {D}irichlet to {N}eumann map.
\newblock {\em Journal of Mathematical Analysis and Applications},
  449(1):46--76, 2017.

\bibitem{bellassoued2010stability}
M.~Bellassoued and D.~D.~S. Ferreira.
\newblock Stability estimates for the anisotropic wave equation from the
  {D}irichlet-to-{N}eumann map.
\newblock {\em Inverse Problems and Imaging}, 5(4):745--773, 2011.

\bibitem{bingham2008iterative}
K.~Bingham, Y.~Kurylev, M.~Lassas, and S.~Siltanen.
\newblock Iterative time-reversal control for inverse problems.
\newblock {\em Inverse Problems \& Imaging}, 2(1):63, 2008.

\bibitem{blagoveshchenskii1967inverse}
A.~Blagoveshchenskii.
\newblock The inverse problem in the theory of seismic wave propagation.
\newblock In {\em Spectral Theory and Wave Processes}, pages 55--67. Springer,
  1967.

\bibitem{blaasten2019blockage}
E.~Bl{\aa}sten, F.~Zouari, M.~Louati, and M.~S. Ghidaoui.
\newblock Blockage detection in networks: The area reconstruction method.
\newblock {\em arXiv preprint arXiv:1909.05497}, 2019.

\bibitem{bosi2017reconstruction}
R.~Bosi, Y.~Kurylev, and M.~Lassas.
\newblock Reconstruction and stability in {G}el'fand's inverse interior
  spectral problem.
\newblock {\em arXiv preprint arXiv:1702.07937}, 2017.

\bibitem{de2018exact}
M.~V. De~Hoop, P.~Kepley, and L.~Oksanen.
\newblock An exact redatuming procedure for the inverse boundary value problem
  for the wave equation.
\newblock {\em SIAM Journal on Applied Mathematics}, 78(1):171--192, 2018.

\bibitem{de2018recovery}
M.~V. de~Hoop, P.~Kepley, and L.~Oksanen.
\newblock Recovery of a smooth metric via wave field and coordinate
  transformation reconstruction.
\newblock {\em SIAM Journal on Applied Mathematics}, 78(4):1931--1953, 2018.

\bibitem{eskin2006new}
G.~Eskin.
\newblock A new approach to hyperbolic inverse problems.
\newblock {\em Inverse problems}, 22(3):815, 2006.

\bibitem{eskin2007inverse}
G.~Eskin.
\newblock Inverse hyperbolic problems with time-dependent coefficients.
\newblock {\em Communications in Partial Differential Equations},
  32(11):1737--1758, 2007.

\bibitem{eskin2008inverse}
G.~Eskin.
\newblock Inverse problems for the schr{\"o}dinger equations with
  time-dependent electromagnetic potentials and the aharonov--bohm effect.
\newblock {\em Journal of Mathematical Physics}, 49(2):022105, 2008.

\bibitem{eskin2017inverse}
G.~Eskin.
\newblock Inverse problems for general second order hyperbolic equations with
  time-dependent coefficients.
\newblock {\em Bulletin of Mathematical Sciences}, 7(2):247--307, 2017.

\bibitem{feizmohammadi2019recovery}
A.~Feizmohammadi and Y.~Kian.
\newblock Recovery of non-smooth coefficients appearing in anisotropic wave
  equations.
\newblock {\em arXiv preprint arXiv:1903.08118}, 2019.

\bibitem{hu2017determination}
G.~Hu and Y.~Kian.
\newblock Determination of singular time-dependent coefficients for wave
  equations from full and partial data.
\newblock {\em arXiv preprint arXiv:1706.07212}, 2017.

\bibitem{isakov1992stability}
V.~Isakov and Z.~Sun.
\newblock Stability estimates for hyperbolic inverse problems with local
  boundary data.
\newblock {\em Inverse problems}, 8(2):193, 1992.

\bibitem{kian2016recovery}
Y.~Kian.
\newblock Recovery of time-dependent damping coefficients and potentials
  appearing in wave equations from partial data.
\newblock {\em SIAM Journal on Mathematical Analysis}, 48(6):4021--4046, 2016.

\bibitem{kian2017recovery}
Y.~Kian and L.~Oksanen.
\newblock Recovery of time-dependent coefficient on {R}iemannian manifold for
  hyperbolic equations.
\newblock {\em International Mathematics Research Notices},
  2019(16):5087--5126, 2017.

\bibitem{kirsch2011introduction}
A.~Kirsch.
\newblock {\em An introduction to the mathematical theory of inverse problems},
  volume 120.
\newblock Springer Science \& Business Media, 2011.

\bibitem{korpela2018discrete}
J.~Korpela, M.~Lassas, and L.~Oksanen.
\newblock Discrete regularization and convergence of the inverse problem for
  1+1 dimensional wave equation.
\newblock {\em Inverse Problems \& Imaging}, 13(3):575--596, 2019.

\bibitem{kurylev2018inverse}
Y.~Kurylev, L.~Oksanen, G.~P. Paternain, et~al.
\newblock Inverse problems for the connection laplacian.
\newblock {\em Journal of Differential Geometry}, 110(3):457--494, 2018.

\bibitem{kurylev2000hyperbolic}
Y.~V. Kurylev and M.~Lassas.
\newblock Hyperbolic inverse problem with data on a part of the boundary.
\newblock In {\em UAB-GIT International Conference on Differential Equations
  and Mathematical Physics}, pages 259--272. American Mathematical Society,
  2000.

\bibitem{lasiecka1991regularity}
I.~Lasiecka and R.~Triggiani.
\newblock Regularity theory of hyperbolic equations with non-homogeneous
  neumann boundary conditions. ii. general boundary data.
\newblock {\em Journal of Differential Equations}, 94(1):112--164, 1991.

\bibitem{lassas2014inverse}
M.~Lassas and L.~Oksanen.
\newblock Inverse problem for the {R}iemannian wave equation with {D}irichlet
  data and neumann data on disjoint sets.
\newblock {\em Duke Mathematical Journal}, 163(6):1071--1103, 2014.

\bibitem{liu2016lipschitz}
S.~Liu and L.~Oksanen.
\newblock A lipschitz stable reconstruction formula for the inverse problem for
  the wave equation.
\newblock {\em Transactions of the American Mathematical Society},
  368(1):319--335, 2016.

\bibitem{martin2008polyhedral}
S.~Martin, P.~Kaufmann, M.~Botsch, M.~Wicke, and M.~Gross.
\newblock Polyhedral finite elements using harmonic basis functions.
\newblock In {\em Computer Graphics Forum}, volume~27, pages 1521--1529. Wiley
  Online Library, 2008.

\bibitem{matthews2017joint}
T.~P. Matthews and M.~A. Anastasio.
\newblock Joint reconstruction of the initial pressure and speed of sound
  distributions from combined photoacoustic and ultrasound tomography
  measurements.
\newblock {\em Inverse problems}, 33(12):124002, 2017.

\bibitem{montalto2014stable}
C.~Montalto.
\newblock Stable determination of a simple metric, a covector field and a
  potential from the hyperbolic {D}irichlet-to-{N}eumann map.
\newblock {\em Communications in Partial Differential Equations},
  39(1):120--145, 2014.

\bibitem{nachman1988reconstructions}
A.~I. Nachman.
\newblock Reconstructions from boundary measurements.
\newblock {\em Annals of Mathematics}, 128(3):531--576, 1988.

\bibitem{oksanen2013solving}
L.~Oksanen.
\newblock Solving an inverse obstacle problem for the wave equation by using
  the boundary control method.
\newblock {\em Inverse Problems}, 29(3):035004, 2013.

\bibitem{pestov1999reconstruction}
L.~Pestov.
\newblock On reconstruction of the speed of sound from a part of boundary.
\newblock {\em Journal of inverse and ill-posed problems}, 7(5):481--486, 1999.

\bibitem{pestov2010numerical}
L.~Pestov, V.~Bolgova, and O.~Kazarina.
\newblock Numerical recovering of a density by the bc-method.
\newblock {\em Inverse Problems \& Imaging}, 4(4):703, 2010.

\bibitem{ramm1991property}
A.~G. Ramm et~al.
\newblock Property c and an inverse problem for a hyperbolic equation.
\newblock {\em Journal of Mathematical Analysis and Applications},
  156(1):209--219, 1991.

\bibitem{salazar2013determination}
R.~Salazar.
\newblock Determination of time-dependent coefficients for a hyperbolic inverse
  problem.
\newblock {\em Inverse Problems}, 29(9):095015, 2013.

\bibitem{stefanov1998stability}
P.~Stefanov and G.~Uhlmann.
\newblock Stability estimates for the hyperbolic {D}irichlet to {N}eumann map
  in anisotropic media.
\newblock {\em journal of functional analysis}, 154(2):330--358, 1998.

\bibitem{stefanov2005stable}
P.~Stefanov and G.~Uhlmann.
\newblock Stable determination of generic simple metrics from the hyperbolic
  {D}irichlet-to-{N}eumann map.
\newblock {\em International Mathematics Research Notices},
  2005(17):1047--1061, 2005.

\bibitem{stefanov2018lorentzian}
P.~Stefanov and Y.~Yang.
\newblock The inverse problem for the {Dirichlet-to-Neumann} map on
  {Lorentzian} manifolds.
\newblock {\em Analysis \& PDE}, 11(6):1381--1414, 2018.

\bibitem{stefanov1989uniqueness}
P.~D. Stefanov.
\newblock Uniqueness of the multi-dimensional inverse scattering problem for
  time dependent potentials.
\newblock {\em Mathematische Zeitschrift}, 201(4):541--559, 1989.

\bibitem{Tataru95unique}
D.~Tataru.
\newblock Unique continuation for solutions to {PDE}'s; between {H}\"ormander's
  theorem and {H}olmgren's theorem.
\newblock {\em Comm. Partial Differential Equations}, 20(5-6):855--884, 1995.

\bibitem{tataru1995unique}
D.~Tataru.
\newblock Unique continuation for solutions to pde's; between hormander's
  theorem and holmgren's theorem.
\newblock {\em Communications in partial differential equations},
  20(5-6):855--884, 1995.

\bibitem{tataru1998regularity}
D.~Tataru.
\newblock On the regularity of boundary traces for the wave equation.
\newblock {\em Annali della Scuola Normale Superiore di Pisa-Classe di
  Scienze}, 26(1):185--206, 1998.

\end{thebibliography}

\end{document}